\newtheorem{theorem}{Theorem}[section]
\newtheorem{example}[theorem]{Example}
\newtheorem{lemma}[theorem]{Lemma}
\newtheorem{remark}[theorem]{Remark}
\newtheorem{definition}[theorem]{Definition}
\newtheorem{problem}[theorem]{Problem}
\newcommand{\ABC}{\mathcal{A}}
\title[Beyond Aztec Castles]{Beyond Aztec Castles:  \\ Toric Cascades in the $dP_3$ Quiver}
\begin{document}

\author{Tri Lai}
\address{Department of Mathematics, University of Nebraska - Lincoln, Lincoln, NE 68588}
\email{tlai3@unl.edu}

\author{Gregg Musiker}
\address{School of Mathematics, University of Minnesota, Minneapolis, Minnesota 55455}
\email{musiker@math.umn.edu}

\subjclass[2000]{13F60, 05C30, 05C70}

\date{June 29, 2016}

\keywords{cluster algebras, combinatorics, graph theory, brane tilings}

\maketitle

\begin{abstract}
Given one of an infinite class of supersymmetric quiver gauge theories, string theorists can associate a corresponding toric variety (which is a Calabi-Yau 3-fold) as well as an associated combinatorial model known as a brane tiling.  In combinatorial language, a brane tiling is a bipartite graph on a torus and its perfect matchings are of interest to both combinatorialists and physicists alike.  A cluster algebra may also be associated to such quivers and in this paper we study the generators of this algebra, known as cluster variables, for the quiver associated to the cone over the del Pezzo surface $dP_3$.  In particular, mutation sequences involving mutations exclusively at vertices with two in-coming arrows and two out-going arrows are referred to as toric cascades in the string theory literature.  Such toric cascades give rise to interesting discrete integrable systems on the level of cluster variable dynamics.  We provide an explicit algebraic formula for all cluster variables which are reachable by toric cascades as well as a combinatorial interpretation involving perfect matchings of subgraphs of the $dP_3$ brane tiling for these formulas in most cases.
\end{abstract}

\tableofcontents

\section{Introduction} 
\label{intro}
This paper discusses an algebraic combinatorics problem that arises in the theory of cluster algebras but whose study is also spurred on by geometric models appearing in string theory.
The engineering of four-dimensional quantum field theories in string theory has developed a great deal beyond $N=4$ supersymmetric Yang-Mills.  Doubly-periodic bipartite planar graphs, also known as {\bf brane tilings}, are of special interest to physicists since perfect matchings on such graphs provide information regarding the geometry of certain toric varieties which are Calabi-Yau $3$-folds.  Such varieties are the moduli spaces of certain quiver representations modulo relations as given by the Jacobian of a quiver with potential.  In the language of theoretical physics, the AdS/CFT correspondence \cite{AdS1,AdS2,AdS3} associates a $N=1$ superconformal quantum field theory known as a supersymmetric quiver gauge theory to a toric Sasaki-Einstein $5$-manifold.

In this paper, we focus on a specific example of such a theory, which is associated to the cone over the del Pezzo surface of degree $6$ ($\mathbb{CP}^2$ blown up at three points) which we refer to as $\mathbf{dP_3}$ in this paper following the conventions of the physics literature \cite{BP,feng}.  As previously studied, via the AdS/CFT correspondence, the corresponding quiver gauge theory is built using a highly symmetric six-vertex quiver (illustrated in Figure \ref{fig:quiv_brane} (Middle)) and the potential
\begin{eqnarray} \label{eq:pot} W &=& A_{16}A_{64}A_{42}A_{25}A_{53}A_{31}
~+~ A_{14}A_{45}A_{51} ~+~ A_{23}A_{36}A_{62} \\ \nonumber &-&  A_{16}A_{62}A_{25}A_{51}
~-~ A_{36}A_{64}A_{45}A_{53} ~-~ A_{14}A_{42}A_{23}A_{31}.\end{eqnarray}

About a decade ago, using methods from dimer theory first developed by mathematicians and physicists alike in the context of statistical mechanics \cite{crystal,KOS}, physicists described how to associate a brane tiling to such a quiver gauge theory \cite{brane_dimer,Vafa}.  See \cite{GK} for a more recent mathematical treatment.  For the $dP_3$ case, this brane tiling is illustrated in Figure \ref{fig:quiv_brane} (Right).  We refer to this as the $dP_3$ brane tiling, or $\mathcal{T}$ for short.

\vspace{1em}

Given a quiver, e.g. the one illustrated in Figure \ref{fig:quiv_brane} (Middle), one may define a {\bf cluster algebra} following the construction of Fomin and Zeleveinsky \cite{FZ}.  (See Section \ref{Sec:Prelim} for details.)  In general, a cluster algebra will have an infinite number of generators, called {\bf cluster variables}, which are grouped into overlapping subsets called {\bf clusters}.  Cluster variables do not freely generate a cluster algebra, rather they satisfy certain binomial exchange relations which are deduced from the data of the quiver defining that given cluster algebra.

A cluster algebra is known as {\bf mutation-finite} if there is a finite list of quivers each of which would define that cluster algebra.  A cluster algebra is known as {\bf mutation-infinite} otherwise.  Combinatorial formulas are known for many mutation-finite cluster algebras \cite{MSW} since most such cluster algebras come from surfaces (see \cite{FeShTu}).  For mutation-infinite cluster algebras, providing a combinatorial formula for every single cluster variable of that algebra is quite a challenge.  However, for some specific mutation-infinite cluster algebras, there has been significant work which provides combinatorial formulas for cluster variables lying in certain subsets \cite{BMPW, DiF, glick, speyer}.

Our work yields further progress towards this combinatorial goal for a rich example which helps to illustrate new features that were unseen in previous descriptions of subsets of cluster varibles for mutation-infinite cluster algebras.  In particular, we describe a three-parameter family of subgraphs of $\mathcal{T}$ with the property that a certain weighted enumeration of {\bf dimers} (also known as {\bf perfect matchings}) yields most of the Laurent expansions of the {\bf toric cluster variables}, which are a subset of the generators of the associated $dP_3$-cluster algebra.  This is stated more precisely as Theorem \ref{thm:main} in terms of the language developed in our paper.  We present our combinatorial formula and methods in full detail with an intention to generalize these methods to further examples in future work.

\vspace{1em}

We now say a little about the history of this problem before discussing the organization of this paper.  Even before the connections between the mathematics and physics of brane tilings were well-known, mathematicians had been studying perfect matchings of $\mathcal{T}$ since the turn of the millennium, as by Jim Propp, Ben Wieland \cite[Problem 15]{EnumPropp} and Mihai Ciucu \cite[Section 7]{perfect}.  (More precisely they were studying tilings of the dual graph of $\mathcal{T}$ which consists of regular hexagons, squares, and triangles.)  In the modern language, Propp conjectured (proven by unpublished work of Wieland and published work by Ciucu) that a certain one-parameter family of subgraphs of $\mathcal{T}$ called {\bf Aztec Dragons}, as an analogue of {\bf Aztec Diamonds} \cite{Elkies}, had the property that the number of perfect matchings of each subgraph was a power of two.

While leading an REU (Research Experience for Undergraduates) at the University of Minnesota in 2012, the second author proposed a problem motivated by Sergey Fomin and Andrei Zelevinsky's theory of cluster algebras \cite{FZ} and the above mentioned theoretical physics.  Definitions of cluster algebras and mutation appear in Section \ref{Sec:Mutations}.  The goal was to obtain combinatorial formulas for the Laurent expansions of cluster variables obtained by certain sequences of mutations of quivers of interest to string theorists such as those associated to reflexive polygons \cite{hanany_polygons}.  As part of this REU, Sicong Zhang was inspired by a paper by Cyndie Cottrell and Benjamin Young \cite{CY} and proved that by weighting the perfect matchings of Aztec Dragons in the appropriate way, it followed that the resulting partition functions (which are Laurent polynomials in this case) agreed with the corresponding cluster variables \cite{zhang}.

In the subsequent summer, 2013 REU students Megan Leoni, Seth Neel, and Paxton Turner worked with the second author to provide a combinatorial interpretation for a two-parameter family of mutation sequences by extending to a family of subgraphs beyond Aztec Dragons \cite{LMNT}.  These were referred to as NE (Northeast) Aztec Castles and SW (Southwest) Aztec Castles.
Simultaneously, in Indiana, an alternative motivation for generalizing Aztec Dragons was being studied by Ciucu and the first author.

Similar to the Aztec Diamonds and the Aztec Dragons, James Propp also introduced Aztec Dungeons on a different lattice (the lattice corresponding to the affine Coxeter group $G_2$) and conjectured an explicit tiling formula for these regions, which is a power of $13$ or twice a power of $13$. This conjecture was later proven by Ciucu (see \cite[Section 8]{perfect}).  Inspired by the Aztec Dungeons, Matt Blum considered a hexagonal counterpart of them called \textbf{Hexagonal Dungeons}.  Blum had conjectured a striking pattern of the number of tilings of a hexagonal dungeon of side-lengths $a,2a,b,a,2a,b$ in cyclic order ($b\geq 2a$), which is $13^{2a^2}14^{\lfloor a^2/2\rfloor}$ (see \cite[Problem 25]{EnumPropp}). The first author and Ciucu proved and generalized Blum's conjecture \cite{lai'} by enumerating tilings of two families of regions restricted in a six-sided contour. This proof inspired a number of similar regions on different lattices, including {\bf Dragon Regions} denoted by $DR^{(1)}$ and $DR^{(2)}$ in \cite{LaiNewDungeon}. These Dragon Regions generalized Propp's Aztec Dragons \cite{perfect,EnumPropp,CY} and the NE/SW-Aztec Castles of \cite{LMNT}.

The present work provides a vivid picture of the cluster algebra associated to the $dP_3$ quiver and is a culmination of the above work on one-parameter, two-parameter, unweighted families of subgraphs of the $dP_3$ brane tiling $\mathcal{T}$.  We describe a general construction that yields the Dragon regions of \cite{LaiNewDungeon} such that the Laurent polynomials obtained from the weighted enumeration of perfect matchings on such subgraphs of $\mathcal{T}$ are exactly a three-parameter family of cluster variables in the $dP_3$-cluster algebra.  For lack of a better name, we have decided to refer to the three-parameter family of subgraphs $\mathcal{T}$ constructed in this paper as (general) {\bf Aztec Castles}.  The work of the first author in \cite{LaiNewDungeon} can be considered as a special case of the unweighted version of the main result, Theorem \ref{thm:main}, of the present paper.  

We prove in Section \ref{sec:gentoric} that this {\bf three-parameter family of cluster variables} is indeed the {\bf set of toric cluster variables}.   Though our work begins with a description of certain mutation sequences known as {\bf generalized $\tau$-mutation sequences}, we show, see Lemma \ref{lem:gentoric}, that for all toric cluster variables $X$, there exists a cluster reachable via a generalized $\tau$-mutation sequence which contains $X$.  Consequently, the aforementioned three-dimensional parameterization is indeed a parameterization of the entire set of toric cluster variables.

The combinatorial interpretation developed in this paper is in a similar spirit as David Speyer's {\bf crosses-and-wrenches} graphical interpretation for solutions to the Octahedron Recurrence \cite{speyer}, Philippe Di Francesco's dimer solutions to {\bf T-Systems} \cite{DiF}, or Bosquet-Melou-Propp-West's {\bf Pinecone} graphs \cite{BMPW}.  However as we highlight, both throughout the paper and especially in Section \ref{sec:open}, the $dP_3$ cluster algebra and Aztec Castles provide several new combinatorial features that were not seen in these earlier examples, and yet, due to the symmetry of the $dP_3$ quiver, this example is ideal for detailed analysis.

\vspace{1em}

In Section 2, we begin with the relevant background material on cluster algebras and then provide a geometric viewpoint on certain cluster mutations (toric mutations) of the $dP_3$ cluster algebra.  Section 3 presents our first theorem (Theorem \ref{thm:explicit}) which is an explicit algebraic formula for the Laurent expansion of any cluster variable reachable from toric mutations of the $dP_3$ quiver.  This provides a three-parameter family which extends the one-parameter and two-parameter families of cluster variables discussed in \cite{zhang} and \cite{LMNT}, respectively.  In Section 4, we illustrate how to construct Aztec Castles via six-tuples which is motivated by the constructions of \cite{LaiNewDungeon} and \cite{LMNT}.  Section 5 discusses why it is sufficient to consider a certain three-parameter family of six-tuples and provides a complete illustration of all of the possible shapes of Aztec Castles.  This leads us to our main theorem (Theorem \ref{thm:main}) which yields a combinatorial interpretation in terms of the $dP_3$ brane tiling for most cluster variables reachable from toric mutations.  There is an issue regarding self-intersecting contours that prevents us from getting a combinatorial interpretation for all reachable cluster variables in terms of perfect matchings even though the algebraic formula of Theorem \ref{thm:explicit} still applies in such cases.  Sections 6 and 7 provide the proof of our main theorem, while Section 8 includes some examples illustrating the proof.  We finish with open problems and directions for further research in Section 9.  
\vspace{1em}

\section{From Mutations To Alcove Walks} \label{Sec:Mutations}

We begin by reviewing the definition of quiver mutations and cluster mutations.  This leads us to study a special subcollection of mutation sequences known as toric mutation sequences.  This special subcollection includes the $\tau$-mutation sequences from \cite{LMNT} as special cases.  We provide a geometric interpretation that allows us to examine toric mutation sequences essentially as alcove-walks on the $\mathbb{Z}^3$ lattice, which extends the visualization of $\tau$-mutation sequences as $\mathbb{Z}^2$-alcove walks from \cite{LMNT}.  We exploit this identification later in this paper to obtain explicit algebraic formulas, Theorem \ref{thm:explicit}, and combinatorial interpretations, Theorem \ref{thm:main}, for the resulting cluster variables.

\subsection{Quiver and Cluster Mutations}
\label{subsec:quivbrane}
A \textbf{quiver} $Q$ is a directed finite graph with a set of vertices $V$ and a set of edges $E$ connecting them whose direction is denoted by an arrow. For our purposes $Q$ may have multiple edges connecting two vertices but may not contain any loops or $2-$cycles. We can relate a cluster algebra with initial seed $\{x_{1},x_{2},\ldots,x_{n}\}$ to $Q$ by associating a cluster variable $x_{i}$ to every vertex labeled $i$ in $Q$ where $|V| = n$. The cluster is the union of the cluster variables at each vertex.

\begin{definition}[\textbf{Quiver Mutation}] Mutating at a vertex $i$ in $Q$ is denoted by $\mu_{i}$ and corresponds to the following actions on the quiver:
\begin{itemize}
\item For every 2-path through $i$ (e.g. $j \rightarrow i \rightarrow k$), add an edge from $j$ to $k$.
\item Reverse the directions of the arrows incident to $i$
\item Delete any 2-cycles created from the previous two steps.
\end{itemize}
\end{definition}
\noindent When we mutate at a vertex $i$, the cluster variable at this vertex is updated and all other cluster variables remain unchanged \cite{FZ}. The action of $\mu_{i}$ on the cluster leads to the following binomial exchange relation:
\begin{equation*}
\label{eq: exchange relation}
x'_{i}x_{i} = \prod_{i \rightarrow j \; \mathrm{in} \; Q}x_{j}^{a_{i \rightarrow j}} + \prod_{j \rightarrow i \; \mathrm{in} \; Q}x_{j}^{b_{j \rightarrow i}}
\end{equation*}
where $x_i'$ is the new cluster variable at vertex $i$, $a_{i \rightarrow j}$ denotes the number of edges from $i$ to $j$, and $b_{j \rightarrow i}$ denotes the number of edges from $j$ to $i$.

\subsection{The Del Pezzo 3 Quiver and its Brane Tiling} \label{Sec:Prelim}

With quiver and cluster mutation in mind, we introduce our main character, the quiver $Q$ associated to the third del Pezzo surface ($\mathbf{dP_3}$), illustrated in Figure \ref{fig:quiv_brane} with its associated brane tiling \cite{brane_dimer,Vafa,FHMSVW,HV}.  We focus on one of the four possible toric phases of this quiver.  In particular, $Q$ is typically referred to as Model 1, as it is in Figure 27 of \cite{franco_eager}.  (However, for completeness, we mention that these were first described earlier in the physics literature in \cite{BP,feng}).)
Its {\bf Toric Diagram} is the the convex hull of the vertices $\{(-1,1), (0,1), (1,0), (1,-1), (0,-1), (-1,0), (0,0)\}$.  See Figure \ref{fig:quiv_brane} (Left).

\begin{figure}\centering
\includegraphics[width=12cm]{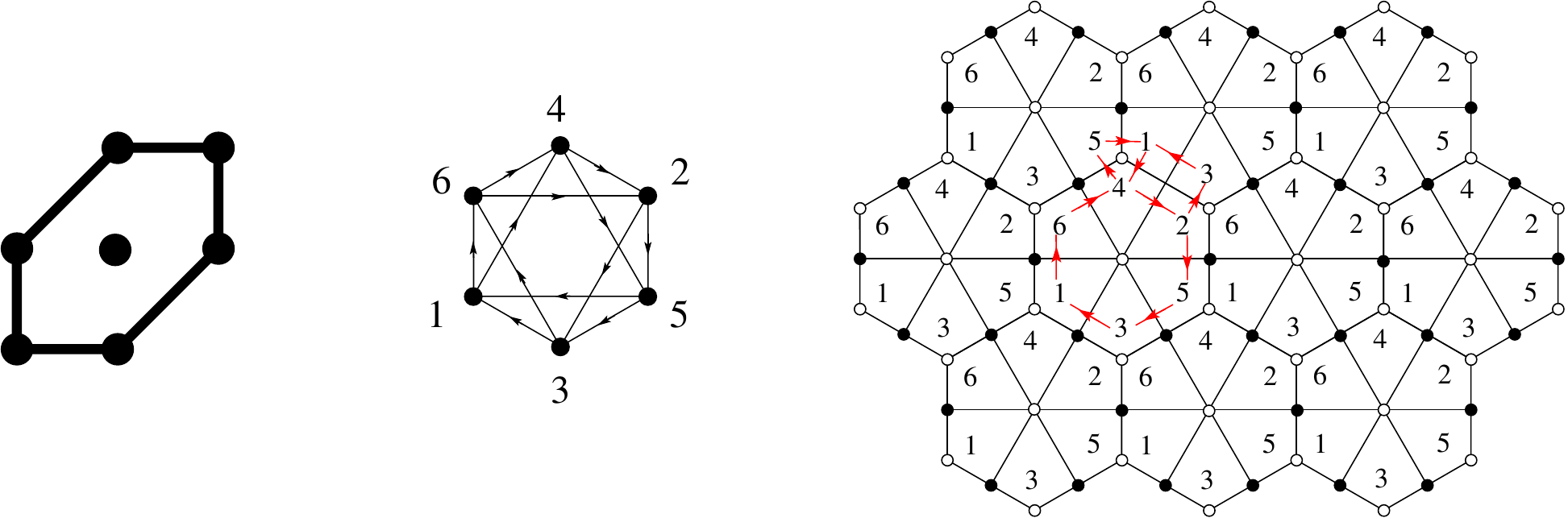}
 \caption{\small The $dP_3$ toric diagram, its quiver $Q$, and its associated brane tiling $\mathcal{T}$.}
    \label{fig:quiv_brane}
\end{figure}

In a toric supersymmetric gauge theory, a formal linear combination of closed cycles of the quiver where each edge in the unit cell of the brane tiling appears exactly twice, once for clockwise (positive) orientation and once for counter-clockwise (negative) orientation, is
known as a \textbf{superpotential}.  Using a pair $(Q,W)$ where $Q$ is a quiver that can be drawn on a torus and $W$ is a related superpotential, we can uniquely build a $2$-dimensional cell complex using potential terms as $2$-faces, quiver arrows as $1$-faces, and quiver vertices as $0$-faces.  We construct this cell complex on a torus (unfolded on its universal cover the Euclidean plane), and then take its planar dual to get the {\bf brane tiling} associated to $(Q,W)$.

Proceeding in this way for the case of the $dP_3$ quiver with the superpotential $W$ given in (\ref{eq:pot}), the associated brane tiling is illustrated on the right-hand-side of Figure \ref{fig:quiv_brane}.  We denote the $dP_3$ brane tiling as $\mathcal{T}$.

\subsection{Toric Mutations} \label{Sec:Toric}

We say that a vertex of a quiver $Q$ is {\bf toric} if it has both in-degree and out-degree $2$.  A {\bf toric mutation} is a mutation at a toric vertex.  Beginning with the $dP_3$ quiver introduced in Section \ref{Sec:Prelim}, mutation at any vertex is a toric mutation.  We call this initial quiver Model 1.  After any such mutation, up to graph isomorphism, we have a quiver as illustrated in the top-right of Figure \ref{fig:dP3QuiverModels} (the graphics are made with \cite{sage}).

In a Model 2 quiver, four out of the six vertices are toric at this point.  Two of these toric vertices come as an antipodal pair on the equator of the octahedron.  Mutation at one of them leads back to the original Model 1 quiver and mutation at the antipode leads to a Model 1 quiver where some of the vertices have been permuted.  The remaining two toric vertices lie at the poles of the octahedron.  Mutation at either of those toric vertices leads to a Model 3 quiver or the reverse of a Model 3 quiver.  See the bottom-left of Figure \ref{fig:dP3QuiverModels}. By abuse of notation we will refer to both of these as Model 3 quivers.

In a Model 3 quiver, there is a unique vertex with in-degree and out-degree $3$.  All other vertices in a Model 3 quiver are toric.  Three of these toric vertices are incident to a double-arrow.  Mutation at those three vertices leads to a Model 4 quiver, illustrated in the bottom-right of Figure \ref{fig:dP3QuiverModels}.  The remaining two toric vertices yields Model 2 quivers after they are mutated.  One of these Model 2 quivers is the quiver previously visited since mutation is an involution.

Finally, in a Model 4 quiver, there are three toric vertices, which are graph isomorphic to one-another.  Mutation at either of them leads to a Model 3 quiver.  To summarize all of the adjacencies, we borrow the following figure from \cite[Figure 27]{franco_eager}.  See Figure \ref{fig:ModelConnections}.  We now focus on special examples of toric mutation sequences before returning to the general case.

\begin{figure}
\includegraphics[width=2.5in]{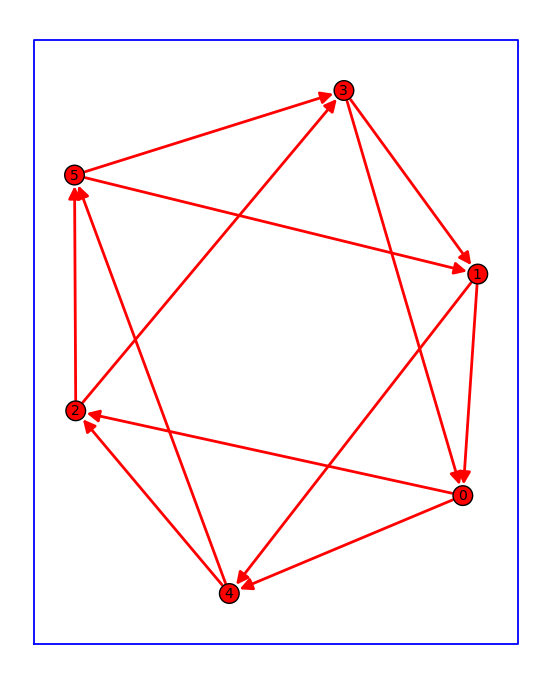}
\includegraphics[width=3.1in]{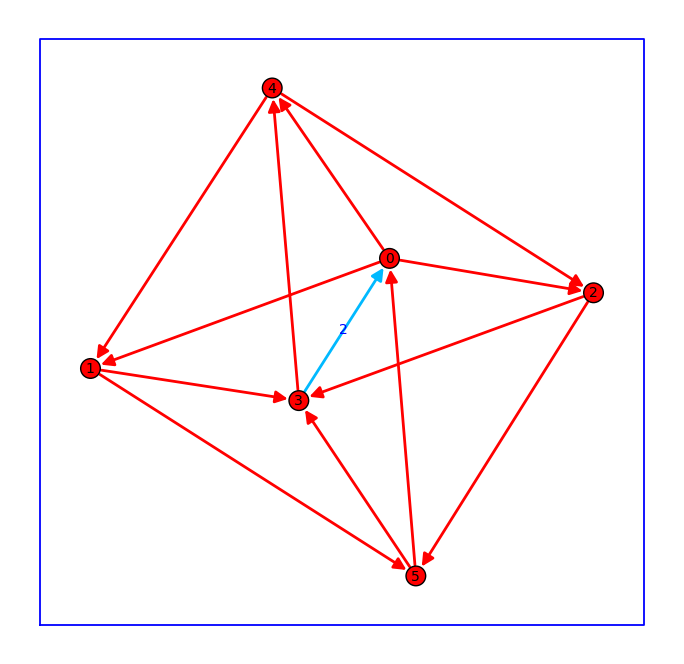}
\includegraphics[width=2.5in]{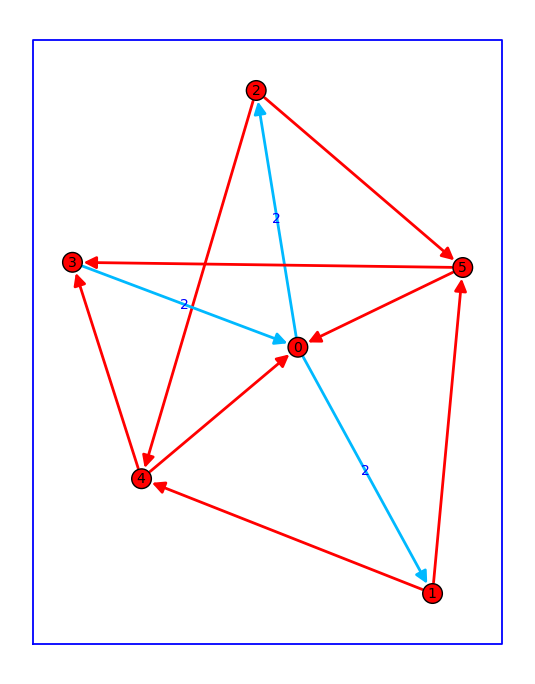}
\includegraphics[width=2.5in]{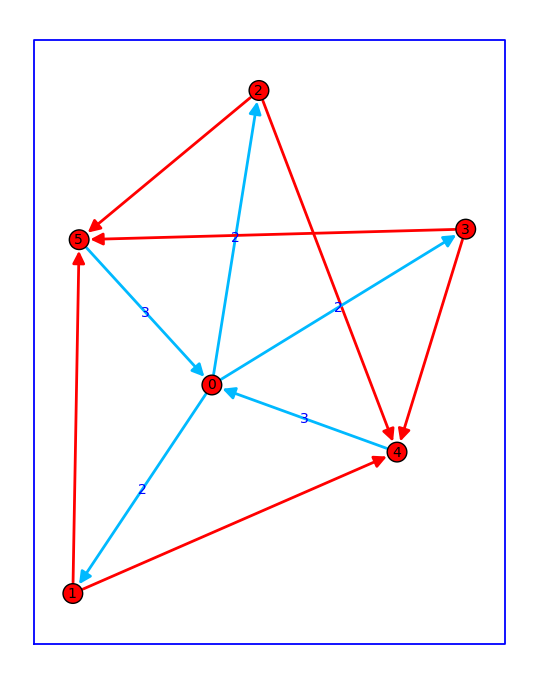}
\caption{Models 1, 2, 3, and 4 of the $dP_3$ quiver.  Two quivers are considered to be the same model if they are equivalent under (i) graph isomorphism and (ii) reversal of all edges.}  
\label{fig:dP3QuiverModels}
\end{figure}
\begin{figure}\centering
\includegraphics[width=12cm]{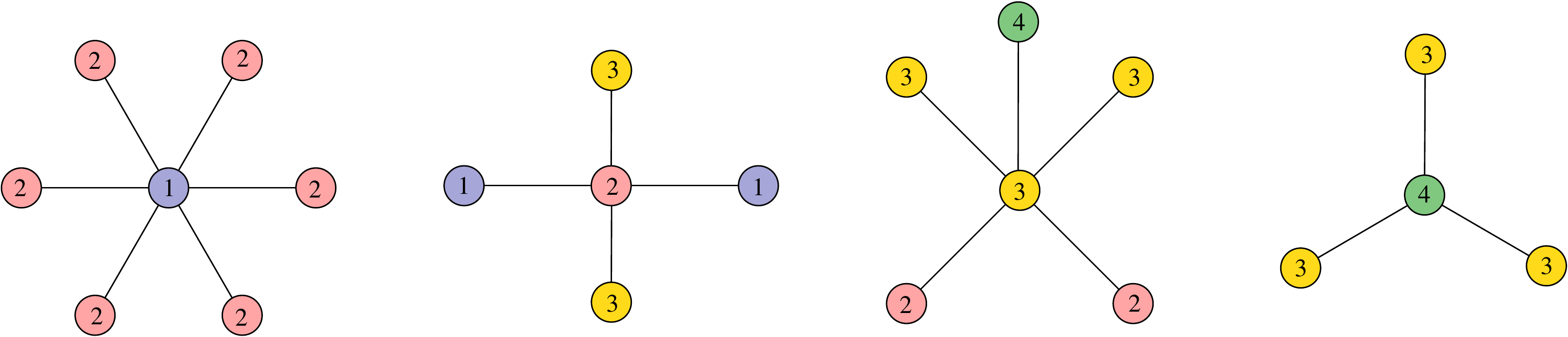}
\caption{Adjacencies between the different models.}
\label{fig:ModelConnections}
\end{figure}

\subsection{Generalized $\tau$-mutation Sequences}
\label{sec:tau}

In this subsection, we define a class of mutation sequences on $Q$, which we refer to as generalized $\tau$-mutation sequences. This extends the definition from \cite{LMNT}, which defined $\tau_1$, $\tau_2$, and $\tau_3$\footnote{Note: the notations for $\tau_i$ and $\tau_i'$-mutations are reversed as compared with \cite{LMNT} since in this paper, the sequences involving combinations of mutations and permutations are more central.}.

\begin{definition}
\label{def:tau0}
Define the following pairs of mutations on $Q$.
\begin{itemize}
\centering
\item[] $\tau_{1}'=\mu_{1} \circ \mu_{2}$
\item[] $\tau_{2}'=\mu_{3} \circ \mu_{4}$
\item[] $\tau_{3}'=\mu_{5} \circ \mu_{6}$
\item[] $\tau_{4}'= \mu_{1} \circ \mu_{4} \circ \mu_1 \circ \mu_5 \circ \mu_1$
\item[] $\tau_{5}'= \mu_{2} \circ \mu_{3} \circ \mu_2 \circ \mu_6 \circ \mu_2$
\end{itemize}

\end{definition}

Since antipodal vertices share no common edges, we observe that $\mu_{2i - 1}$ and $\mu_{2i}$ commute for $i \in \{1,2,3\}$.
Furthermore, for such $i$, the action of $\tau_{i}$ on the quiver exchanges the labels on vertices $2i-1$ and $2i$.  This motivates us to define
the following actions on cluster seeds which are slight variants of the $\tau'$-mutations.

\begin{definition}
\label{def:tau}
Define the following actions on $Q$.
\begin{itemize}
\centering
\item[] $\tau_{1}=\mu_{1} \circ \mu_{2} \circ (12)$
\item[] $\tau_{2}=\mu_{3} \circ \mu_{4} \circ (34)$
\item[] $\tau_{3}=\mu_{5} \circ \mu_{6} \circ (56)$
\item[] $\tau_{4}= \mu_{1} \circ \mu_{4} \circ \mu_1 \circ \mu_5 \circ \mu_1 \circ (145)$
\item[] $\tau_{5}= \mu_{2} \circ \mu_{3} \circ \mu_2 \circ \mu_6 \circ \mu_2 \circ (236)$.
\end{itemize}
where we apply a graph automorphism of $Q$ and permutation to the labeled seed after the sequence of mutations.
\end{definition}

One can then check that on the level of quivers and labeled seeds (i.e. ordered clusters), we have the following identities:
For all $i,j$ such that $1 \leq i \not = j \leq 3$
\begin{equation}
\label{eq: tau_relations}
\begin{split}
 \tau_1(Q) = \tau_2(Q) = \tau_3(Q) = \tau_4(Q) = \tau_5(Q) &= Q  \\
(\tau_{i})^{2} \{x_1,x_2\dots, x_6\} = (\tau_{4})^{2} \{x_1,x_2\dots, x_6\} = (\tau_{5})^{2} \{x_1,x_2\dots, x_6\}&= \{x_1,x_2\dots, x_6\} \\
(\tau_{i}\tau_{j})^{3} \{x_1,x_2\dots, x_6\}&= \{x_1,x_2\dots, x_6\}, \\
\tau_i \tau_4 \{x_1,x_2\dots, x_6\} &= \tau_4 \tau_i \{x_1,x_2\dots, x_6\}, \\
\tau_i \tau_5 \{x_1,x_2\dots, x_6\} &= \tau_5 \tau_i \{x_1,x_2\dots, x_6\}.
\end{split}
\end{equation}

Lastly, letting $\tau_4\circ \tau_5$ act on the labeled seed $\{x_1,x_2\dots, x_6\}$, we see that this sequence has infinite order.  From these relations, it follows that $\langle \tau_1,\tau_2,\dots, \tau_5\rangle$ generate a subgroup\footnote{We later show, see Remark \ref{Rem:unique}, that in fact there are no other relations and $\langle \tau_1,\tau_2,\dots, \tau_5\rangle$ generate the full reflection group of this type.  We do not require this equality for the remainder of Section \ref{Sec:Mutations}.} of the reflection group of type $\tilde{A}_2 \times I_\infty$ where $\tilde{A}_2$ is the affine symmetric group on $\{0,1,2\}$ and  $I_\infty$ is the infinite dihedral group.   

We define a \textbf{generalized} $\mathbf{\tau}$-\textbf{mutation sequence} $S$ to be a mutation sequence of the form $\tau_{a_1} \tau_{a_2} \ldots \tau_{a_k}$ with the $a_i$'s in $\{1,2,3,4,5\}$. 

\begin{definition} [\bf Ordered Cluster from a generalized $\tau$-mutation sequence]
Starting with an initial cluster of $[x_1,x_2,x_3,x_4,x_5,x_6]$, we let $Z^S = [z^S_1, z^S_2, z^S_3, z^S_4, z^S_5, z^S_6]$ denote the ordered cluster resulting from the generalized $\tau$-mutation sequence $S$.
\end{definition}

\subsection{Viewing Generalized $\tau$-mutation Sequences as Prism Walks} \label{sec:walks}

Before saying more about the cluster variables arising from a generalized $\tau$-mutation sequence, we develop a two-parameter, and later three-parameter, coordinate system motivated by the relations satisfied by the $\tau_i$'s.  In particular, we have the following.

\begin{remark} Since we intertwine the permutations with the mutations when applying a generalized $\tau$-mutation sequence, all of the $\tau_i$'s fix quiver $Q$.  It follows that  $Z^S$ is well-defined up to the $\tilde{A}_2 \times I_\infty$ relations of (\ref{eq: tau_relations}).
\end{remark}

Let $L^\Delta$ denote the square lattice triangulated with $45^\circ$-~$45^\circ$-~$90^\circ$ triangles, as in Figure \ref{fig:sqlattice}.  Note that $L^\Delta$ is isomorphic to the affine $\widetilde{A}_2$ Coxeter lattice, which we let $\tau_1$, $\tau_2$, and $\tau_3$ act on as simple reflections, equivalently as steps in an alcove walk as in \cite{Rou} (also compare with \cite[Figure 5]{LMNT}).
We let the triangle $\{(0,-1), (-1,0), (0,0)\}$ be the initial alcove and use the convention
that $\tau_1$ (resp. $\tau_2$, $\tau_3$) corresponds to a flip across the horizontal (resp. vertical, diagonal) edge of the initial alcove.  For the remaining alcoves, we obtain the correspondence between edges and $\tau_i$'s by noting that around each vertex in $L^\Delta$, the assignments alternate between a given $\tau_i$ and $\tau_j$ (for $i,j \in \{1,2,3\}$).  Using two sides of the triangle in the initial alcove allows us to uniquely extend the assignment to all other edges.

\begin{figure}
    \centering
    
\setlength{\unitlength}{3947sp}%
\begingroup\makeatletter\ifx\SetFigFont\undefined%
\gdef\SetFigFont#1#2#3#4#5{%
  \reset@font\fontsize{#1}{#2pt}%
  \fontfamily{#3}\fontseries{#4}\fontshape{#5}%
  \selectfont}%
\fi\endgroup%
      \scalebox{2.0}{\scalebox{0.5}{
  \begin{picture}(0,0)%
\includegraphics{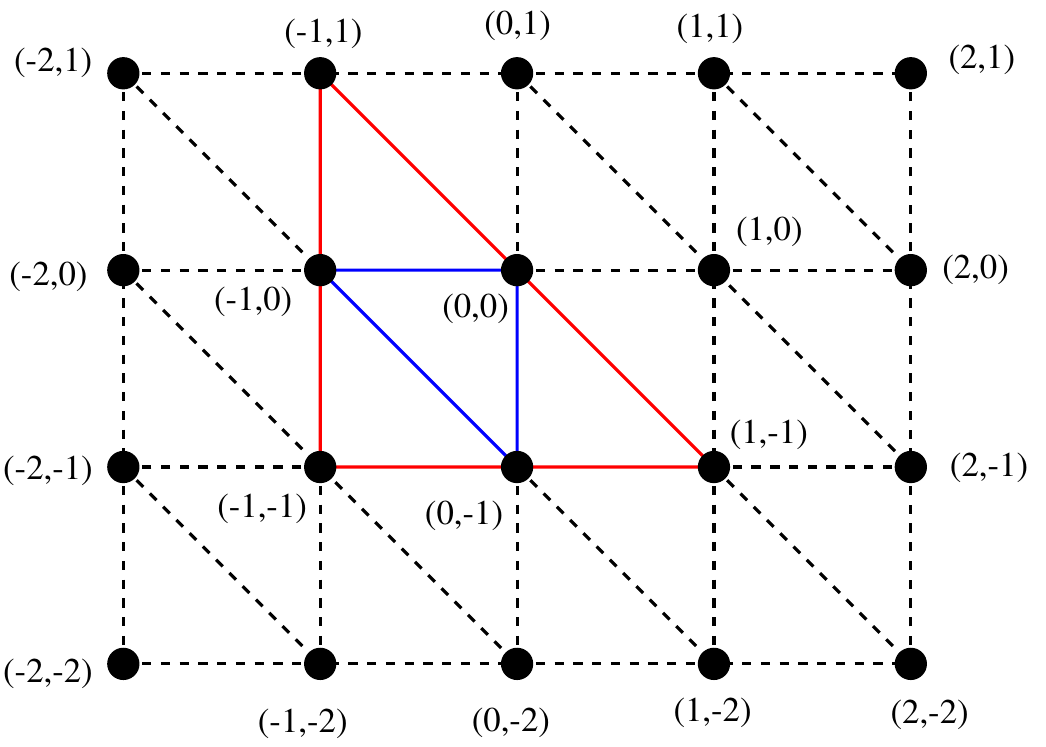}%
\end{picture}%

\begin{picture}(5005,3569)(826,-3337)
\put(2506,-1539){\makebox(0,0)[lb]{\smash{{\SetFigFont{10}{12.0}{\familydefault}{\mddefault}{\updefault}{ $\tau_3$}%
}}}}
\put(3339,-1531){\makebox(0,0)[lb]{\smash{{\SetFigFont{10}{12.0}{\familydefault}{\mddefault}{\updefault}{ $\tau_2$}%
}}}}
\put(2476,-991){\makebox(0,0)[lb]{\smash{{\SetFigFont{10}{12.0}{\familydefault}{\mddefault}{\updefault}{ $\tau_1$}%
}}}}
\put(3834,-1539){\makebox(0,0)[lb]{\smash{{\SetFigFont{10}{12.0}{\familydefault}{\mddefault}{\updefault}{ $\tau_1$}%
}}}}
\put(3583,-984){\makebox(0,0)[lb]{\smash{{\SetFigFont{10}{12.0}{\familydefault}{\mddefault}{\updefault}{ $\tau_2$}%
}}}}
\put(3834,-601){\makebox(0,0)[lb]{\smash{{\SetFigFont{10}{12.0}{\familydefault}{\mddefault}{\updefault}{ $\tau_3$}%
}}}}
\put(4276,-631){\makebox(0,0)[lb]{\smash{{\SetFigFont{10}{12.0}{\familydefault}{\mddefault}{\updefault}{ $\tau_2$}%
}}}}
\put(4794,-609){\makebox(0,0)[lb]{\smash{{\SetFigFont{10}{12.0}{\familydefault}{\mddefault}{\updefault}{ $\tau_1$}%
}}}}
\put(3313,-616){\makebox(0,0)[lb]{\smash{{\SetFigFont{10}{12.0}{\familydefault}{\mddefault}{\updefault}{ $\tau_1$}%
}}}}
\put(2896,-624){\makebox(0,0)[lb]{\smash{{\SetFigFont{10}{12.0}{\familydefault}{\mddefault}{\updefault}{ $\tau_2$}%
}}}}
\put(2386,-594){\makebox(0,0)[lb]{\smash{{\SetFigFont{10}{12.0}{\familydefault}{\mddefault}{\updefault}{ $\tau_3$}%
}}}}
\put(2716,-61){\makebox(0,0)[lb]{\smash{{\SetFigFont{10}{12.0}{\familydefault}{\mddefault}{\updefault}{ $\tau_3$}%
}}}}
\end{picture}    
      }}
	\caption{The lattice $L^\Delta$ with the initial alcove labeled as $(0,-1), (-1,0)$, and $(0,0)$.  The three $\tau$-mutations $\tau_1$, $\tau_2$, $\tau_3$ correspond to the directions of the triangular flips.}  
	\label{fig:sqlattice}
\end{figure}

Using (\ref{eq: tau_relations}), any generalized $\tau$-mutation sequence $S$ can be written as $S=S_1S_2$, where $S_1$ consists entirely of $\tau_1$'s, $\tau_2$'s, and $\tau_3$'s (i.e. a $\tau$-mutation sequence as defined in \cite{LMNT}), and $S_2$ is an alternating product of $\tau_4$ and $\tau_5$.  For such an $S_1$, we associate an {\bf alcove walk} by starting in the initial alcove for $S_1 = \emptyset$ and then for each $\tau_i$ in $S_1$ (from left-to-right), we apply the associated reflection, which yields one of the three neighboring alcoves.  The following remark is easy to verify and will be useful for the descriptions of triangular flips we use below.

\begin{remark} \label{rem:Delta} The lattice $L^\Delta$ consists of two orientations of triangles, a NE-pointing triangle, $[(i,j), (i-1,j+1), (i,j+1)]$, and a SW-pointing one, $[(i,j),(i-1,j+1),(i-1,j)]$.  Further, the image of $[(i_1,j_1),(i_2,j_2),(i_3,j_3)]$ under the map $\alpha : \mathbb{Z}^2 \to \mathbb{Z}/3\mathbb{Z}$ defined as $(I,J) \mapsto I - J \mod 3$ is a permutation of $[1,2,3]$.  See Figure \ref{fig:3levels}.  In fact, $L^\Delta$ is completely and disjointly tiled by the set of NE-pointing triangles whose NE vertex $(I,J)$ satisfies $\alpha(I,J) = 3$.  Such triangles are translates of the triangle shown shaded.
\end{remark}

\begin{remark} \label{rem:order123} For convenience, consider the case that $|S_1|$ is even so that we have $T_{S_1} = [(i,j), (i-1,j+1), (i,j+1)]$, is a NE-pointing triangle, as in Remark \ref{rem:Delta}.  Without loss of generality, we also focus on the case $i - j \equiv 1 \mod 3$.   If we apply triangular flips in the three possible directions, we obtain adjacent triangles $T_{S_1\tau_1}$, $T_{S_1\tau_2}$, and $T_{S_1\tau_3}$ where we have exchanged one vertex for a new one: $(i,j) \leftrightarrow (i-1,j+2)$, $(i-1,j+1) \leftrightarrow (i+1,j)$, or $(i,j+1) \leftrightarrow (i-1,j)$, keeping the order otherwise the same.  By a case-by-case analysis, we see that the values of $\alpha(T_{S_1\tau_r})$ continue to be  $[1,2,3]$ in that order, see Figures \ref{fig:3levels} and \ref{fig:flips}.
\end{remark}

\begin{figure}
\includegraphics[width=3in]{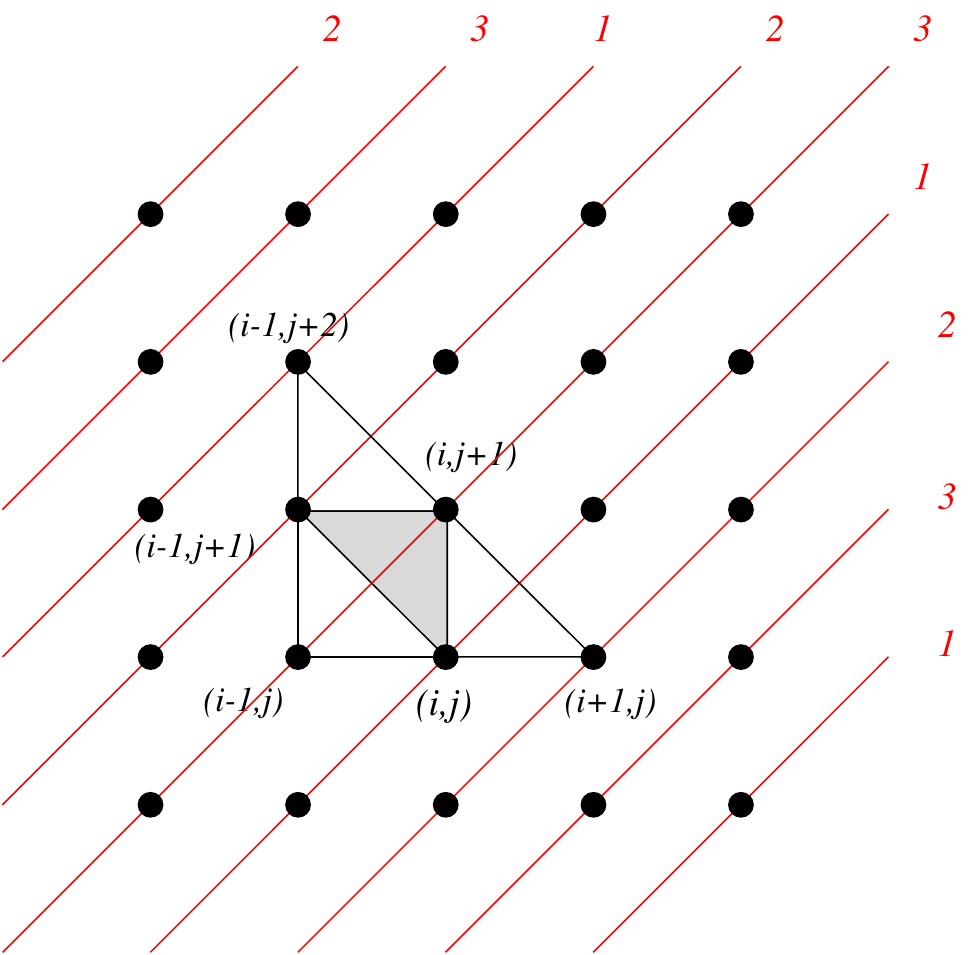}
\caption{Levels of $i-j \mod 3$ in Lattice $L^\Delta$ illustrated.}
\label{fig:3levels}
\end{figure}

\begin{figure}
\begin{picture}(0,0)%
\includegraphics{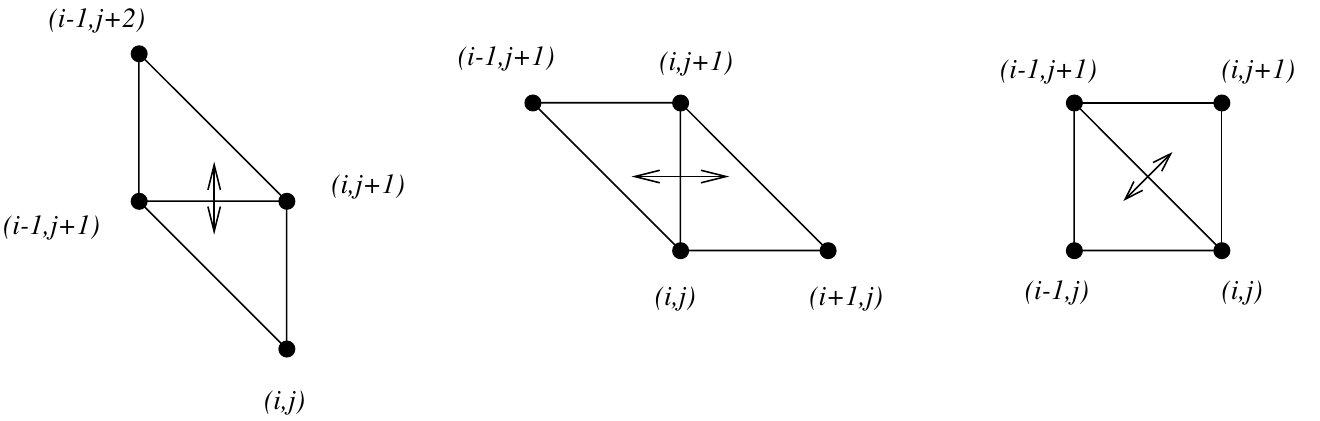}%
\end{picture}%
\setlength{\unitlength}{3947sp}%
\begingroup\makeatletter\ifx\SetFigFont\undefined%
\gdef\SetFigFont#1#2#3#4#5{%
  \reset@font\fontsize{#1}{#2pt}%
  \fontfamily{#3}\fontseries{#4}\fontshape{#5}%
  \selectfont}%
\fi\endgroup%
\begin{picture}(6286,1954)(813,-1579)
\put(6171,-629){\makebox(0,0)[lb]{\smash{{\SetFigFont{8}{9.6}{\rmdefault}{\mddefault}{\itdefault}{$\tau_3$}%
}}}}
\put(1819,-834){\makebox(0,0)[lb]{\smash{{\SetFigFont{8}{9.6}{\rmdefault}{\mddefault}{\itdefault}{$\tau_1$}%
}}}}
\put(4144,-531){\makebox(0,0)[lb]{\smash{{\SetFigFont{8}{9.6}{\rmdefault}{\mddefault}{\itdefault}{$\tau_2$}%
}}}}
\end{picture}
\caption{The three possible triangular flips illustrated.} \label{fig:flips}
\end{figure}

With coordinates now described for $\tau$-mutation sequences $S_1$ consisting of $\tau_1$'s, $\tau_2$'s and $\tau_3$'s, we now consider a generalized $\tau$-mutation sequence $S=S_1S_2$, where $S_2$ consists of $\tau_4$'s and $\tau_5$'s.  For such a generalized $\tau$-mutation sequence we associate a corresponding triangular prism in $\mathbb{Z}^3$.  By the relations (\ref{eq: tau_relations}) and the equivalence $L^\Delta \times \mathbb{Z} \cong \mathbb{Z}^3$, the following definition is well-defined.  In Section \ref{sec:subgraphs}, these prisms will be used to define a corresponding $6$-tuple of graphs, which will serve as our combinatorial interpretation for cluster variables.

\begin{definition} [\bf Prism $\Delta^S$ from a generalized $\tau$-mutation sequence $S$] \label{def:prism}
Factor $S$ as $S_1S_2$ as mentioned above, and let the triple
$[(i_1,j_1),(i_2,j_2),(i_3,j_3)]$ denote the triangle of $L^\Delta$ reached after applying the alcove walk associated to $S_1$ starting from the initial alcove.  We order this triple such that that vertices satisfy $i_r- j_r \equiv r \mod 3$.  Then
$$\Delta^{S} = \Delta^{S_1S_2} = [(i_1,j_1,k_1),(i_1,j_1,k_2),(i_2,j_2,k_2),(i_2,j_2,k_1),(i_3,j_3,k_1),(i_3,j_3,k_2)]$$
in $\mathbb{Z}^3 \cong L^\Delta \times \mathbb{Z}$ where we let $\{k_1,k_2\} = \{|S_2|, |S_2|+1\}$ (resp. $\{-|S_2|,-|S_2|+1\}$) if $S_2$ starts with $\tau_5$ (resp. $\tau_4$).  Here the correspondence between these sets depends on the parity of $|S_2|$: we let $k_1 = \pm |S_2|$ if $|S_2|$ is odd and $k_2 = \pm |S_2|$ otherwise.  
\end{definition}

In particular, notice that by Definition \ref{def:prism}, $$\Delta^\emptyset = [(0,-1,1), (0,-1,0), (-1,0,0), (-1,0,1), (0,0,1), (0,0,0)].$$

Since clusters are well-defined up to the relations (\ref{eq: tau_relations}), it follows that we can associate the ordered cluster $Z^S = [z_1^S,z_2^S,\dots, z_6^S]$, obtained from the generalized $\tau$-mutation sequence $S$, to the prism
$\Delta^S$.
Since both of these lists are ordered, this induces a mapping from the set of lattice points of $\mathbb{Z}^3$ to the set of cluster variables reachable via a generalized $\tau$-mutation sequence.
Accordingly, we use the notation $z_i^{j,k}$ to denote the cluster variable corresponding to the lattice point $(i,j,k)$, noting that we have not yet shown that this mapping is an injection.  
We will show injectivity in Section \ref{sec:explicit} but our arguments in Section \ref{sec:gentoric} do not require it.

\begin{figure}
\includegraphics[width=5in]{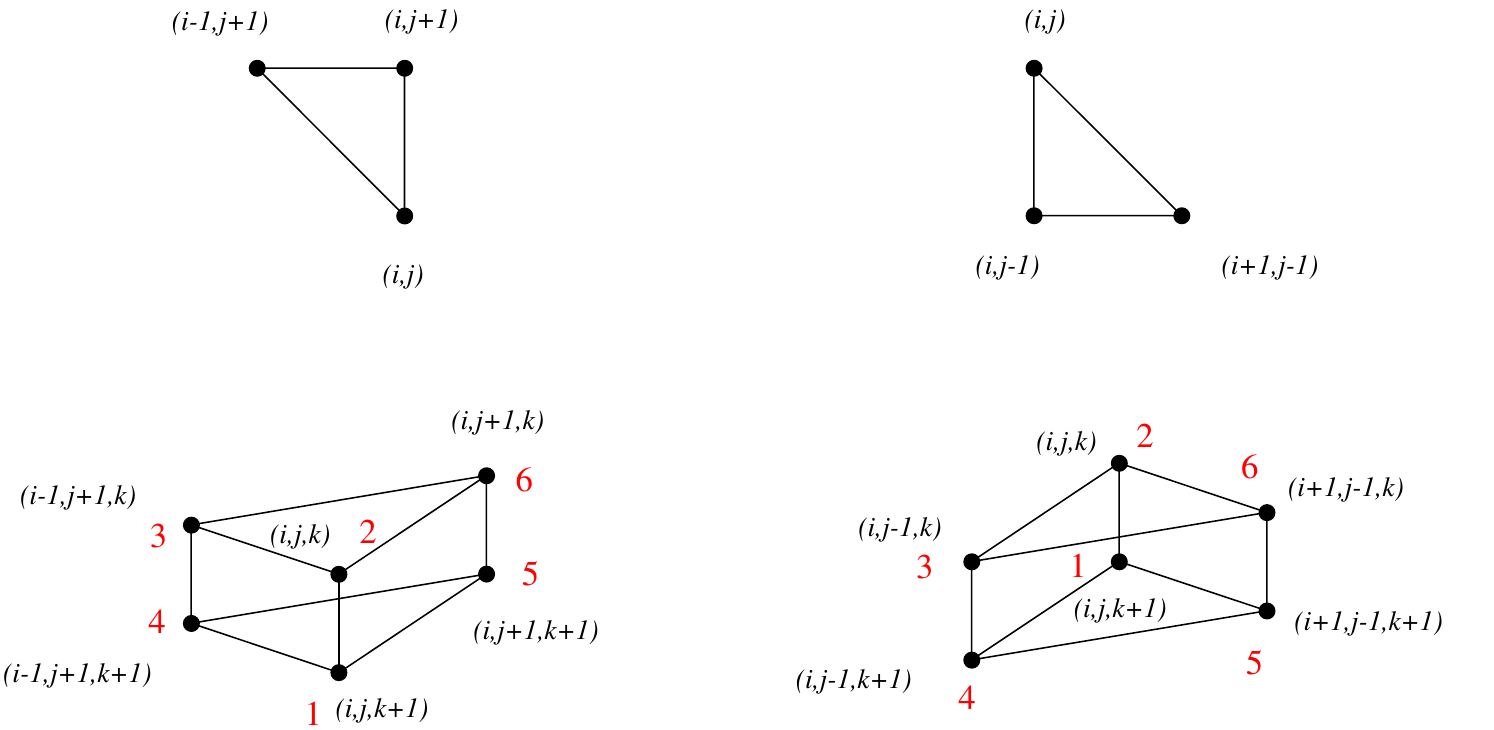}
\caption{As in Remark \ref{rem:Delta}, we have two possible orientations of a triangle, i.e. alcove, in $L^\Delta$.  We let these be shorthand for
the corresponding triangular prisms in $L^\Delta \times \mathbb{Z} \cong \mathbb{Z}^3$ reached by applying a $\tau$-mutation sequence $S_1$ involving
only $\tau_1$'s, $\tau_2$'s, and $\tau_3$'s.}
\label{fig:ModelI}
\end{figure}

\begin{figure}
\includegraphics[width=5.5in]{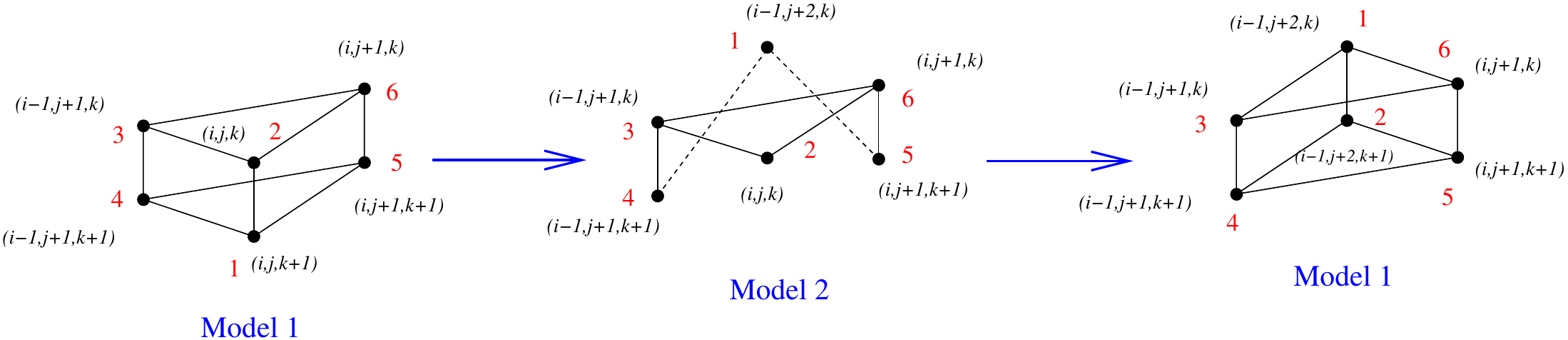}
\caption{Illustrating the prism as it is transformed by mutations $\mu_1$ and $\mu_2$, resulting in the image of $\tau_1' = \mu_1\circ\mu_2$.  Applying permutation $(12)$ yields a triangular prism of the above form.}
\label{fig:ModelII}
\end{figure}

\begin{figure}
\includegraphics[width=5.5in]{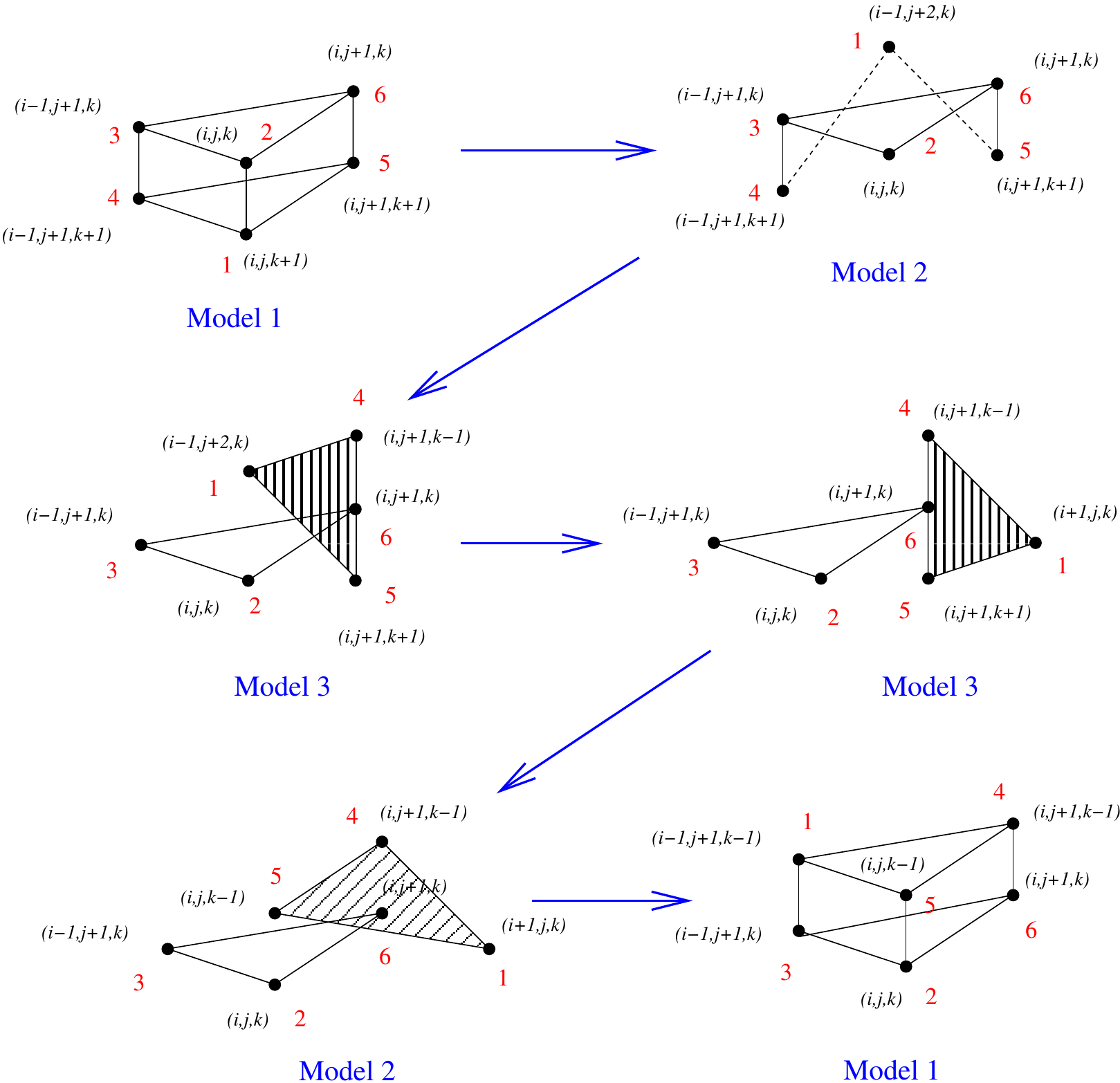}
\caption{We illustrate the $\mathbb{Z}^3$-transformations induced by the mutation sequence $\tau_4' = \mu_1\mu_4\mu_1\mu_5\mu_1$.  Applying permutation $(145)$ to this result
yields a triangular prism of the standard form but with a decreased third coordinate.}
\label{fig:ModelIII}
\end{figure}

\subsection{General Toric Mutation Sequences geometrically} \label{sec:gentoric}

As described at the end of the last section, the set of clusters that are reachable via a generalized $\tau$-mutation sequence can be modeled as prisms in the $\mathbb{Z}^3$ lattice.  In particular, the effect of a single $\tau_i$ corresponds to a glide reflection or translation on one of the two prisms illustrated in Figure \ref{fig:ModelI}.  Using these two types of transformations, we can move the initial prism, $[(0,-1,1), (0,-1,0), (-1,0,0), (-1,0,1), (0,0,1), (0,0,0)]$, to any other isometric prism in $\mathbb{Z}^3$ up to reflection or translation.  The natural ensuing question is what about the set of clusters that are reachable via other toric mutation sequences?  We answer this question with the following result.

\begin{lemma} \label{lem:gentoric}
For the cluster algebra associated to the $dP_3$ Quiver, the set of toric cluster variables, i.e. those reachable via a toric mutation sequence, coincides with the set of cluster variables that are reachable by generalized $\tau$-mutation sequences.  Consequently, we obtain a parameterization of such toric cluster variables by $\mathbb{Z}^3$.
\end{lemma}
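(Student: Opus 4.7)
The plan is to split the claim into two containments. The inclusion of cluster variables reachable by generalized $\tau$-mutation sequences into the set of toric cluster variables is immediate: each $\tau_\ell'$ in Definition \ref{def:tau0} is an explicit product of single mutations, and by tracing through the Model adjacency diagram in Figure \ref{fig:ModelConnections} one verifies step-by-step that every intermediate vertex being mutated has in-degree and out-degree exactly $2$ in the running quiver.

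For the reverse containment, I would argue by induction on the length $n$ of a toric mutation sequence $\mu_{a_n} \circ \cdots \circ \mu_{a_1}$, with the inductive hypothesis being that each cluster variable appearing in any of the running clusters $C_0, C_1, \ldots, C_{n-1}$ is already contained in some cluster reachable by a generalized $\tau$-mutation sequence. The inductive step reduces to analyzing a single additional toric mutation $\mu_i$ applied to $C_{n-1}$, which produces at most one new variable. A case analysis by the Model $m \in \{1,2,3,4\}$ of the quiver underlying $C_{n-1}$, and by which of its toric vertices is being mutated, then comes into play. The central structural claim to verify is that the five generators in Definition \ref{def:tau0} collectively realize, up to the graph automorphisms of $Q$, every possible loop in the Model adjacency graph beginning and ending at Model 1: the short Model 1--Model 2--Model 1 loops are realized by $\tau_1'$, $\tau_2'$, $\tau_3'$ (one for each antipodal pair), while the longer excursions through Models 3 and 4 are realized by $\tau_4'$ and $\tau_5'$ (one for each of the two threefold-symmetric orbits).

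With this structural fact in hand, the new cluster variable produced by $\mu_i$ can be matched to an intermediate variable occurring inside some $\tau_\ell'$ applied to a Model 1 cluster already reachable by generalized $\tau$-mutations. To promote an intermediate variable of $\tau_\ell'$ to one appearing in an actual end cluster of some generalized $\tau$-mutation sequence, I would either read it off as the final component mutation of $\tau_\ell'$ before the attached permutation of Definition \ref{def:tau}, or append a short additional $\tau$-sequence that keeps that variable present in the ambient cluster, using the flexibility supplied by the braid and involution relations \eqref{eq: tau_relations}. Once this reduction is complete, the $\mathbb{Z}^3$-parameterization of toric cluster variables is immediate from Definition \ref{def:prism}, which has already identified generalized $\tau$-mutation sequences with walks on triangular prisms in $\mathbb{Z}^3 \cong L^\Delta \times \mathbb{Z}$.

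The main obstacle will be the combinatorial bookkeeping in the case analysis: one must match every possible toric mutation at each Model to a component of some $\tau_\ell'$, taking into account the label-permutations built into Definition \ref{def:tau} and the graph automorphisms of $Q$. Fortunately the Model graph is small and the $dP_3$ quiver carries extensive antipodal and threefold symmetry, so the cases can be organized compactly; the essential task is verifying that no exotic toric-mutation loop escapes the subgroup generated by the $\tau_\ell'$.
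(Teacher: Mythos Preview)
Your overall strategy---induction on the length of a toric mutation sequence together with a Model-by-Model case analysis---is close in spirit to the paper's, but there are two concrete problems.

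First, your inductive hypothesis is too weak. You assume only that each individual variable in $C_{n-1}$ lies in \emph{some} $\tau$-reachable cluster, but to analyze the exchange relation for $\mu_i$ at $C_{n-1}$ you need the entire local configuration (the quiver together with the neighboring cluster variables) to match an intermediate step of a $\tau$-sequence. Knowing that the six variables are separately $\tau$-reachable does not give you this. The paper resolves this by strengthening the induction implicitly: it assigns $\mathbb{Z}^3$-coordinates to every variable in the running cluster and then checks, one toric mutation at a time, that the full $6$-tuple of coordinates transforms according to the pictures in Figures~\ref{fig:ModelII}, \ref{fig:ModelIII}, and \ref{fig:ModelsGeom}. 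In other words, the inductive invariant is that the whole cluster is one of the finitely many $\mathbb{Z}^3$-configurations drawn there, not merely that its entries are individually parameterized.

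Second, your structural claim about Model~4 is incorrect: neither $\tau_4'$ nor $\tau_5'$ ever visits a Model~4 quiver. Tracing $\tau_4' = \mu_1\mu_4\mu_1\mu_5\mu_1$ through Figure~\ref{fig:ModelConnections} (and as depicted in Figure~\ref{fig:ModelIII}), the path is Model~$1 \to 2 \to 3 \to 3 \to 2 \to 1$; the middle step is the Model~$3 \to$ Model~$3$ toric mutation, not a detour through Model~4. Thus toric mutation sequences that do enter Model~4 cannot be matched to an intermediate step of any $\tau_\ell'$, and your case analysis would be incomplete. The paper handles this separately: it observes (Figure~\ref{fig:ModelsGeom}) that the single Model~$3 \to$ Model~$4$ toric mutation produces a variable whose $\mathbb{Z}^3$-coordinate coincides with one already obtained along a specific $\tau$-sequence (e.g.\ $\tau_2\tau_1$), and since every toric mutation out of Model~4 returns to Model~3, this one extra check closes the argument.
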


\begin{proof} We prove this result by breaking up $\tau$-mutation sequences into their individual components.  From the above analysis, i.e. as induced by the relations (\ref{eq: tau_relations}), we know how each of the $\tau_i$'s transforms one cluster to another in terms of the $\mathbb{Z}^3$-parameterization.  However, since most of the individual mutations appearing in a $\tau_i$ appear therein uniquely, we may immediately deduce in most cases how an individual mutation transforms one cluster variable to another in terms of this parameterization as well.  To start with, in Figure \ref{fig:ModelII}, we illustrate how the components of $\tau_1$ act on $\mathbb{Z}^3$.  By symmetry, the components of $\tau_2$ and $\tau_3$ induce rotated versions of these moves. 

Next we present Figure \ref{fig:ModelIII}, which illustrates the intermediate steps that together yield the vertical translation induced by $\tau_4$. The validity of the second (resp. third) configuration in this figure follows from the above description of how the completed mutation sequences $\tau_1$ (resp. $\tau_4$) affects a cluster.  Similarly, if we run the $\tau_4$ mutation sequence backwards, the validity of the fifth and fourth configurations follow analogously.   By similar logic, the sequence $\tau_5$ would induce the opposite composition of moves.

\begin{figure}
\includegraphics[width=5in]{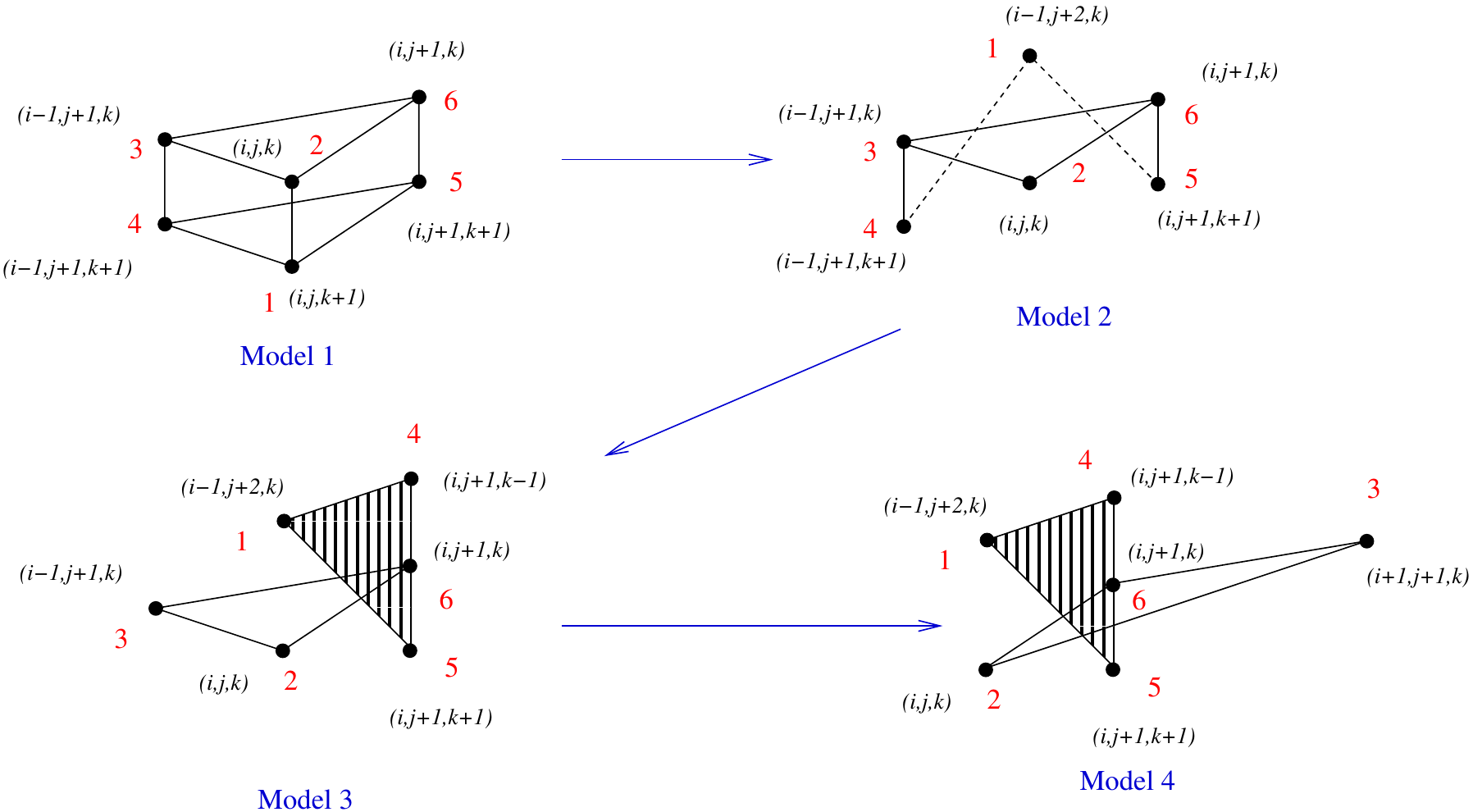}
\caption{The toric mutation sequence $\mu_1\mu_4\mu_3$ does not correspond to a product of $\tau_i$'s but illustrates a toric mutation between Models 3 and 4.}
\label{fig:ModelsGeom}
\end{figure}

By comparing these figures with the description of toric mutations in Section \ref{Sec:Toric}, we observe that we can obtain $\mathbb{Z}^3$-coordinates for all cluster variables obtained from a toric mutation sequence that passes through quivers of Model 1, Model 2, or Model 3 type.  In particular, up to geometric rotations or reflections, any single toric mutation starting from (1) a Model 1 quiver corresponds to the local transformation illustrated on the left-hand-side of Figure \ref{fig:ModelII}; (2) a Model 2 quiver corresponds either to the local transformation illustrated on the right-hand-side of Figure \ref{fig:ModelII} or the diagonal arrows of Figure \ref{fig:ModelIII}; (3) a Model 3 quiver corresponds to either the middle arrow of Figure \ref{fig:ModelIII}, the diagonal arrows on either side in Figure \ref{fig:ModelIII}, or is handled in the ensuing paragraph.

To complete this picture to include all toric mutation sequences, even those that visit a Model 4 type quiver, we include Figure \ref{fig:ModelsGeom} that shows the $\mathbb{Z}^3$-transformations induced by the toric mutations between Model 3 and Model 4.  In particular, the content of Figure \ref{fig:ModelsGeom} is that if we start with the initial cluster $\{x_1,x_2,\dots, x_6\}$ and mutate by $\mu_1$, $\mu_4$, then $\mu_3$ we obtain a new cluster whose third cluster variable agrees with the second cluster variable in the cluster after the generalized $\tau$-mutation sequence $\tau_2\tau_1$.  The resulting quiver is of Model 4 type and the resulting cluster variables are still parameterized by $\mathbb{Z}^3$.  Any toric mutation of a Model 4 quiver leads back to a Model 3 one, and as described in Section \ref{Sec:Toric}, all toric mutations between Model 3 and Model 4 are equivalent.  Thus up to symmetry, we have illustrated all possible single step toric mutations among any of the models of the $dP_3$ quiver and hence for any toric mutation sequence rather than just the generalized $\tau$-mutation sequences.
\end{proof}

\begin{remark} \label{rem:RG}  In Section 4 of \cite{FHU}, Franco, Hanany, and Uranga discussed certain mutation sequences, called {\bf duality cascades} in their language, of the $dP_3$ quiver that are significant for geometric and physical reasons.  After comparing their work to ours, we realized that their cascades are essentially the $\tau_i$'s defined above.  It is of interest that our combinatorial motivation, i.e. we looked for a family of mutation sequences whose combinations would satisfy Coxeter relations, aligned with their geometric objectives of understanding the Renormalization Group (RG) flow.  The second author has started investigating other examples such as $dP_2$ and $Y_{p,q}$'s with Franco to see how the use of fractional branes and beta functions could lead to other families of mutation sequences satisfying Coxeter relations.
\end{remark}

\begin{remark} \label{rem:zono} In Sections 3.2.2 and 8.3 of \cite{franco_eager}, Eager and Franco discuss a possible coordinate system for working with mutation sequences of quivers from brane tilings.  They are motivated by previous work in tilting theory \cite{BH} and the multi-dimensional octahedron recurrence \cite{HK,HS,speyer}, and sketch examples of coordinates for $dP_2$ and $dP_3$.  In their coordinate system, they describe certain duality cascades that act as translations of a zonotope.

As described in Section \ref{sec:walks}, the importance of the generalized $\tau$-mutation sequences is that out of the space of all possible toric mutation sequences, the generalized $\tau$-mutation sequences are the ones that map a cluster that looks like a prism (which is an example of a zonotope) to a translation, or a glide reflection, of the same prism.  Hence, up to a coordinate change, our generalized $\tau$-mutation sequences of the present paper should agree with the duality cascades described in \cite{franco_eager}.  With Eager, the second author is currently exploring the possibility of explicitly defining these coordinate systems and zonotopes for other examples.
\end{remark}

\begin{remark} Unpublished work by the researchers Andr\'e Henriques, David Speyer, and Dylan Thurston on the multi-dimensional octahedron recurrence continues the point of view from \cite{HK,HS,speyer} and would provide an alternative construction for cluster variable coordinates that would be a variant of the ones discussed in this section.  We thank David Speyer for enlightening conversations on this topic.  More comments comparing our approaches to theirs are included in Section \ref{sec:open}.
\end{remark}

\section{Explicit formula for cluster variables} \label{sec:explicit}

By Lemma \ref{lem:gentoric}, we have a surjection between the lattice points of $\mathbb{Z}^3$ and the cluster variables reachable from a general toric mutation sequence.  Moreover, a general toric mutation sequence $S$, applied to the initial cluster, reaches a cluster $Z^S$ that either may be modeled as a prism $\Delta^S$ in $\mathbb{Z}^3$ or is at most three mutation steps away from such a cluster.  In this section, we continue to use the notation $z_i^{j,k}$ to denote the cluster variable corresponding to lattice point $(i,j,k)$.  With this notation in mind, we come to our first main result.

\begin{theorem} \label{thm:explicit}
Let $(i,j,k) \in \mathbb{Z}^3$ and $z_i^{j,k}$ be the associated cluster variable (reachable by a toric mutation sequence) as described above.

Define $A= \frac{x_3x_5+x_4x_6}{x_1x_2}$, $B=\frac{x_1x_6+x_2x_5}{x_3x_4}$, $C=\frac{x_1x_3+x_2x_4}{x_5x_6}$, $D=\frac{x_1x_3x_6+x_2x_3x_5+x_2x_4x_6}{x_1x_4x_5}$, and
$E = \frac{x_2x_4x_5 + x_1x_3x_5 + x_1x_4x_6}{x_2x_3x_6}$.
Then $z_i^{j,k}$ is given by the Laurent polynomial
{\large $$x_r ~~A^{\lfloor \frac{(i^2+ij+j^2+1) + i + 2j}{3}\rfloor} ~
B^{\lfloor \frac{(i^2+ij+j^2+1) + 2i + j}{3}\rfloor}~
C^{\lfloor \frac{i^2+ij+j^2+1}{3}\rfloor}~
D^{\lfloor \frac{(k-1)^2}{4}\rfloor}~
E^{\lfloor \frac{k^2}{4}\rfloor}$$} where

$r = 1$ if $2(i-j) + 3k \equiv 5$, ~ $r = 2$ if $2(i-j) + 3k \equiv 2$, ~ $r = 3$ if $2(i-j) + 3k \equiv 4$,

$r = 4$ if $2(i-j) + 3k \equiv 1$, ~ $r = 5$ if $2(i-j) + 3k \equiv 3$, ~ $r = 6$ if $2(i-j) + 3k \equiv 0$ working modulo $6$.  In particular, the variable $x_r$ is uniquely determined by the values of $(i-j)$ modulo $3$ and $k$ modulo $2$.
\end{theorem}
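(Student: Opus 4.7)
The plan is to prove the formula by induction on the length of a generalized $\tau$-mutation sequence $S$. By Lemma \ref{lem:gentoric}, the set of cluster variables reached as $S$ ranges over all such sequences coincides with the set of all toric cluster variables, so it suffices to verify the formula at each of the six vertices of every prism $\Delta^S$.

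For the base case $S = \emptyset$, the initial prism is $\Delta^\emptyset = [(0,-1,1),(0,-1,0),(-1,0,0),(-1,0,1),(0,0,1),(0,0,0)]$. At each of these six points I would compute $r(i,j,k)$ from the residue formula, check it gives the expected label, and observe that $i^2+ij+j^2 \in \{0,1\}$ and $k \in \{0,1\}$, so every exponent of $A, B, C, D, E$ vanishes and the formula reproduces the initial cluster $\{x_1,\dots,x_6\}$.

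For the inductive step, write $S = S'\tau_m$ for some $m \in \{1,\dots,5\}$; assume the formula has been proven for the six cluster variables attached to $\Delta^{S'}$, and derive it for $\Delta^S$. The cluster variables that actually change under $\tau_m$ are computed from those of $\Delta^{S'}$ via the binomial exchange relations of the component mutations comprising $\tau_m$. Substituting the inductive hypothesis into these exchange relations reduces the task to verifying a polynomial identity involving $A, B, C, D, E$, $x_1,\dots,x_6$, and the shifts in the floor-function exponents produced by the corresponding glide reflection or translation of the prism depicted in Figures \ref{fig:ModelII} and \ref{fig:ModelIII}. By the order-three rotational symmetry of the $dP_3$ quiver (which cyclically permutes $A, B, C$) and the reflection symmetry swapping $\tau_4$ with $\tau_5$ (and $D$ with $E$), it suffices to carry out the verification in detail for $\tau_1$ and $\tau_4$.

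The main obstacle is bookkeeping the quadratic floor-function exponents. When $(i,j)$ is altered by a triangular flip under $\tau_1$, the form $i^2+ij+j^2$ changes by a linear correction in $i,j$, and this correction must match, after taking floors modulo $3$, the exponents of $A, B, C$ produced on the right-hand side of the exchange relation; an analogous phenomenon occurs for $k$ under $\tau_4$ and $\tau_5$ with the exponents of $D$ and $E$. The verification splits into residue classes of $i-j \bmod 3$ and the parity of $k$, producing a finite but somewhat tedious case analysis that ultimately reduces to a handful of monomial identities that follow directly from the definitions of $A, B, C, D, E$. Once these identities are in hand, the injectivity of the map $\mathbb{Z}^3 \to \{\text{toric cluster variables}\}$ follows at no extra cost, since distinct lattice points produce Laurent monomials of distinct multidegree in $A, B, C, D, E$.
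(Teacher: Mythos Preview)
Your plan is correct and essentially identical to the paper's proof: both induct from the initial prism, verify that the claimed product formula is preserved by each exchange relation (splitting into residue classes of $i-j \bmod 3$ and parity of $k$), and invoke Lemma~\ref{lem:gentoric} to conclude. The paper isolates the floor-function bookkeeping as a standalone Lemma~\ref{lem:ABCDE} and records the needed algebraic identities such as $x_4x_5 D = x_2^2 A + x_3x_6$ explicitly, but these are precisely the ``handful of monomial identities'' you allude to.

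One small point to tighten: for $m\in\{4,5\}$ your stated hypothesis (the formula at the six vertices of $\Delta^{S'}$) is not quite enough as written, because the five component mutations of $\tau_4$ produce and consume intermediate cluster variables at lattice points such as $(i-1,j+2,k)$ and $(i+1,j,k)$ that lie in neither $\Delta^{S'}$ nor $\Delta^S$. The paper deals with this by verifying the formula at each of the five intermediate steps (recurrences (\ref{eq:kRec1})--(\ref{eq:kRec5})); equivalently, you should either strengthen the hypothesis to cover all lattice points reached by shorter generalized $\tau$-sequences, or first establish the formula throughout the current $\{k,k+1\}$-slice via $\tau_1,\tau_2,\tau_3$ before invoking $\tau_4,\tau_5$.
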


\begin{remark}
This nontrivial correspondence between the values of $r$ and $2(i-j)+3k \mod 6$ comes from our cyclic ordering
of the Model 1 $dP_3$ quiver in counter-clockwise order given in Figure \ref{fig:quiv_brane}.  In particular, as we rotate from vertex $r$ to $r'$ in clockwise order, the corresponding value of $2(i-j)+3k \mod 6$ increases by $1$ (circularly).
\end{remark}

\begin{remark} \label{Rem:unique}
As an application of Theorem \ref{thm:explicit}, observe that $z_i^{j,k} \not = z_{i'}^{j',k'}$ unless $i=i'$, $j=j'$, and $k=k'$.  This follows since each of these algebraic expressions are distinct for different $(i,j,k)\in \mathbb{Z}^3$.
\end{remark}

\noindent Before proving the result we note the following identities that will aid us in the proof.

\begin{lemma} \label{lem:ABCDE}
Let $c(i,j) = i^2 + ij + j^2 + 1$, $a(i,j) = c(i,j) + i + 2j$, and $b(i,j) = c(i,j) + 2i+j$.  Then we have the identities
{\footnotesize \begin{eqnarray} \label{eq:thirdA} \bigg\lfloor \frac{a(i,j)}{3}\bigg\rfloor + \bigg\lfloor \frac{a(i-1,j+2)}{3}\bigg\rfloor &=& \bigg\lfloor \frac{a(i-1,j+1)}{3}\bigg\rfloor + \bigg\lfloor \frac{a(i,j+1)}{3}\bigg\rfloor + \chi(i - j \equiv 1 \mod 3),  \\
\label{eq:thirdB} \bigg\lfloor \frac{b(i,j)}{3}\bigg\rfloor + \bigg\lfloor \frac{b(i-1,j+2)}{3}\bigg\rfloor &=& \bigg\lfloor \frac{b(i-1,j+1)}{3}\bigg\rfloor + \bigg\lfloor \frac{b(i,j+1)}{3}\bigg\rfloor + \chi(i - j \equiv 2 \mod 3), \\
\label{eq:thirdC} \bigg\lfloor \frac{c(i,j)}{3}\bigg\rfloor + \bigg\lfloor \frac{c(i-1,j+2)}{3}\bigg\rfloor &=& \bigg\lfloor \frac{c(i-1,j+1)}{3}\bigg\rfloor + \bigg\lfloor \frac{c(i,j+1)}{3}\bigg\rfloor + \chi(i - j \equiv 0 \mod 3), \\
\label{eq:fourthD} \bigg\lfloor \frac{k^2}{4}\bigg\rfloor +  \bigg\lfloor \frac{(k-2)^2}{4}\bigg\rfloor &=&
\bigg\lfloor \frac{(k-1)^2}{4}\bigg\rfloor + \bigg\lfloor \frac{(k-1)^2}{4}\bigg\rfloor + \chi(k \mathrm{~is~even}),  \mathrm{~and} \\
\label{eq:fourthE}\bigg\lfloor \frac{(k+1)^2}{4}\bigg\rfloor +  \bigg\lfloor \frac{(k-1)^2}{4}\bigg\rfloor &=&
\bigg\lfloor \frac{k^2}{4}\bigg\rfloor + \bigg\lfloor \frac{k^2}{4}\bigg\rfloor + \chi(k \mathrm{~is~odd})
\end{eqnarray} }
where $\chi(S)$ equals $1$ when statement $S$ is true and equals $0$ otherwise. The identities
{\footnotesize \begin{eqnarray} \label{eq:thirdAA} \bigg\lfloor \frac{b(i-1,j+1)}{3}\bigg\rfloor + \bigg\lfloor \frac{b(i+1,j)}{3}\bigg\rfloor &=& \bigg\lfloor \frac{b(i,j)}{3}\bigg\rfloor + \bigg\lfloor \frac{b(i,j+1)}{3}\bigg\rfloor + \chi(i - j \equiv 1 \mod 3) \\
\label{eq:thirdAAA} \bigg\lfloor \frac{c(i,j+1)}{3}\bigg\rfloor + \bigg\lfloor \frac{c(i-1,j)}{3}\bigg\rfloor &=& \bigg\lfloor \frac{c(i,j)}{3}\bigg\rfloor + \bigg\lfloor \frac{c(i-1,j+1)}{3}\bigg\rfloor + \chi(i - j \equiv 1 \mod 3) \end{eqnarray} }

\vspace{-1em}\noindent and analogous versions for $i-j \equiv 2$ or $0 \mod 3$ also hold if all instances of $b(i,j)$ (resp. $c(i,j)$) are switched with $c(i,j)$ or $a(i,j)$ (resp. $a(i,j)$ or $b(i,j)$).
 \end{lemma}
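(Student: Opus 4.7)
The plan is to reduce each of the seven floor identities to a finite case check by first proving the corresponding identity without the floor symbols and then tracking residues. For the identities involving $a,b,c$, let $x,y,z,w$ denote the four quantities being floored; I expect a direct expansion to give an exact relation of the form $x+y-z-w=1$ (independent of $i,j$), while for the $k^2/4$ identities the analogous exact identity should read $x+y-z-w=2$.

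To establish the first set, I would expand
\[ c(i,j)+c(i-1,j+2)-c(i-1,j+1)-c(i,j+1) \]
and verify by direct polynomial simplification that it equals $1$. Since $a$ and $b$ differ from $c$ only by the linear terms $i+2j$ and $2i+j$, a brief check shows these linear corrections contribute $0$ to each of the four-term combinations appearing in \eqref{eq:thirdA}, \eqref{eq:thirdB}, \eqref{eq:thirdAA}, \eqref{eq:thirdAAA}, so all six four-term $abc$-combinations evaluate exactly to $1$. For \eqref{eq:fourthD} and \eqref{eq:fourthE}, expanding $k^2+(k-2)^2-2(k-1)^2$ and $(k+1)^2+(k-1)^2-2k^2$ both give $2$.

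Next I would use the elementary identity $\lfloor n/m\rfloor = (n - (n \bmod m))/m$ to reduce each floor identity to the residues of the relevant expressions. For the $3$-identities, a short calculation gives $c(i,j)\equiv (i-j)^2+1 \pmod 3$, which together with $a\equiv c+(i-j)$ and $b\equiv c-(i-j) \pmod 3$ yields the residue table
\[ \begin{array}{c|ccc} i-j \bmod 3 & c & a & b \\ \hline 0 & 1 & 1 & 1 \\ 1 & 2 & 0 & 1 \\ 2 & 2 & 1 & 0 \end{array} \]
Observe that shifting $(i,j) \mapsto (i-1,j+2)$ preserves $i-j \bmod 3$, while $(i,j)\mapsto (i-1,j+1)$ and $(i,j)\mapsto (i,j+1)$ shift it by $+1$ and $+2$ respectively; similarly for the translation patterns in \eqref{eq:thirdAA}--\eqref{eq:thirdAAA}. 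Inserting the residues into $\lfloor x/3\rfloor + \lfloor y/3\rfloor - \lfloor z/3\rfloor - \lfloor w/3\rfloor = (1 - (x+y-z-w \bmod 3\text{-residues}))/3$ leaves a single integer to compute in each of the three residue classes, and one verifies that this integer equals $1$ precisely when $i-j$ lies in the class prescribed by the corresponding $\chi$ term, and $0$ otherwise.

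The $k^2/4$ identities are handled in exactly the same spirit, splitting on the parity of $k$: in each parity class the four floors collapse to explicit rational expressions and the claimed extra $\chi(k \text{ even})$ or $\chi(k \text{ odd})$ drops out. The only potential pitfall is bookkeeping for the "analogous versions for $i-j\equiv 2$ or $0 \bmod 3$" mentioned after \eqref{eq:thirdAAA}; here I would note that those versions are obtained by the same residue argument, substituting the appropriate row of the residue table, so no separate calculation is needed beyond writing out the two remaining cases. There is no real obstacle in the proof—the entire argument is a mechanical verification—so I would organize the write-up as one exact polynomial lemma followed by a table of residues from which every case is read off uniformly.
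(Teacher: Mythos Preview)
Your approach is essentially identical to the paper's: both establish the exact polynomial identity (the four-term $a/b/c$ combinations equal $1$, the $k^2$ combinations equal $2$), compute the residues of $a,b,c$ modulo $3$ as a function of $i-j\bmod 3$ (and of $k^2$ modulo $4$ by parity), and then read off each case. Your organization via a single residue table is a bit tidier than the paper's case-by-case display, but the mathematical content is the same.
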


\begin{proof}

Firstly we note that  $a(i,j) \equiv  \begin{cases} 1 \mod 3 \mathrm{~if~}i-j \equiv 0 \mathrm{~or~} 2\mod 3 \\ 0 \mod 3 \mathrm{~if~}i - j \equiv 1\end{cases}.$

Similarly, we see that $b(i,j) \equiv  \begin{cases} 1 \mod 3 \mathrm{~if~}i-j \equiv 0 \mathrm{~or~} 1\mod 3 \\ 0 \mod 3 \mathrm{~if~}i - j \equiv 2\end{cases}$

and $c(i,j) \equiv \begin{cases} 1 \mod 3 \mathrm{~if~}i-j \equiv 0 \mod 3 \\ 2 \mod 3 \mathrm{~if~}i-j \equiv 1 \mathrm{~or~} 2\mod 3\end{cases}.$

It is easy to verify also that
\begin{equation}
\label{eq:IdA} a(i,j) + a(i-1,j+2) - a(i-1,j+1) - a(i,j+1) = 1
\end{equation}
and we get identical equalities for $b(i,j)$ and $c(i,j)$.
Subtracting the floor functions appearing on the RHS of (\ref{eq:thirdA}) from those on the LHS, we obtain
{\small \begin{eqnarray*}\frac{a(i,j)}{3} + \frac{a(i-1,j+2)}{3} &-& \frac{a(i-1,j+1)-1}{3} - \frac{a(i,j+1)-1}{3} \mathrm{~~when~}i - j \equiv 1 \mathrm{~mod~3}, \\
\frac{a(i,j)-1}{3} + \frac{a(i-1,j+2)-1}{3} &-& \frac{a(i-1,j+1)-1}{3} - \frac{a(i,j+1)}{3}   \mathrm{~~when~}i - j \equiv 2 \mathrm{~mod~3,~and} \\
\frac{a(i,j)-1}{3} + \frac{a(i-1,j+2)-1}{3} &-& \frac{a(i-1,j+1)}{3} - \frac{a(i,j+1)-1}{3} \mathrm{~~when~}i - j \equiv 0 \mathrm{~mod~3}.
\end{eqnarray*}}

Using identity (\ref{eq:IdA}), the result is $\chi(i-j \equiv 1 \mod 3)$ exactly as desired.
The proof of identity (\ref{eq:thirdB}) is essentially identical.  In the case of (\ref{eq:thirdC}), we see
{\small \begin{eqnarray*}\frac{c(i,j)-2}{3} + \frac{c(i-1,j+2)-2}{3} &-& \frac{c(i-1,j+1)-2}{3} - \frac{c(i,j+1)-1}{3} \mathrm{~~when~}i - j \equiv 1 \mathrm{~mod~3,} \\
\frac{c(i,j)-2}{3} + \frac{c(i-1,j+2)-2}{3} &-& \frac{c(i-1,j+1)-1}{3} - \frac{c(i,j+1)-2}{3}  \mathrm{~~when~}i - j \equiv 2 \mathrm{~mod~3,~and} \\
\frac{c(i,j)-1}{3} + \frac{c(i-1,j+2)-1}{3} &-& \frac{c(i-1,j+1)-2}{3} - \frac{c(i,j+1)-2}{3} \mathrm{~~when~}i - j \equiv 0 \mathrm{~mod~3},
\end{eqnarray*}}

\noindent resulting in $\chi(i-j \equiv 0 \mod 3)$.

We next consider the identities (\ref{eq:fourthD}) and (\ref{eq:fourthE}).  Notice that these identities agree up to a shift of $k$ by $1$ so it suffices to prove (\ref{eq:fourthE}).

We have $k^2 \mod 4 \equiv \begin{cases} 1 \mathrm{~if~k~is~odd} \\ 0 \mathrm{~if~k~is~even}\end{cases}$ and the reverse is true for $(k\pm 1)^2 \mod 4$.
Thus if $k$ is odd, taking the difference of the floor functions appearing on RHS from the floor functions appearing on the LHS we obtain
$$\frac{k^2+2k+1}{4} + \frac{k^2-2k+1}{4} - \frac{k^2-1}{4} - \frac{k^2-1}{4} = 1$$
and obtain $$\frac{k^2+2k}{4} + \frac{k^2-2k}{4} - \frac{k^2}{4} - \frac{k^2}{4} = 0$$ if $k$ is even.
Hence, this difference is exactly $\chi(k \mathrm{~is~odd})$ as desired.

Identities (\ref{eq:thirdAA}), (\ref{eq:thirdAAA}), and their analogues for $b(i,j)$ and $c(i,j)$ are proved by the same method as the proofs of
(\ref{eq:thirdA})-(\ref{eq:thirdC}).
\end{proof}

With this lemma in hand we now prove Theorem \ref{thm:explicit}.   Our proof extends the arguments appearing in the proof of Theorem 2.1 of \cite{LaiNewDungeon} and in Section 2 of \cite{LMNT}.

\begin{proof}[Proof of Theorem \ref{thm:explicit}]
We begin with the base cases for $(i,j) = (0,-1), (-1,0),$ or $(0,0)$ and $k=0$ or $1$.
We see that $(\lfloor \frac{a(i,j)}{3} \rfloor, \lfloor \frac{b(i,j)}{3} \rfloor, \lfloor \frac{c(i,j)}{3} \rfloor, \lfloor \frac{(k-1)^2}{4} \rfloor, \lfloor \frac{k^2}{4} \rfloor)= (0,0,0,0,0)$ for all six of these cases and hence $$[z_0^{-1,1}, z_0^{-1,0}, z_{-1}^{0,0}, z_{-1}^{0,1}, z_0^{0,1}, z_0^{0,0}] = Z^\emptyset =  [x_1,x_2,\dots, x_6]$$ as desired.

We prove the explicit formula in general by showing that recurrences induced by mutations are also satisfied by the relevant products of $x_r$, $A$, $B$, $C$, $D$, and $E$.  For brevity, let $\ABC_i^{j} = A^{\lfloor \frac{a(i,j)}{3}\rfloor}
B^{\lfloor \frac{b(i,j)}{3}\rfloor}
C^{\lfloor \frac{c(i,j)}{3}\rfloor}$ and $\mathcal{D}^k = D^{\lfloor \frac{(k-1)^2}{4}\rfloor}E^{\lfloor \frac{k^2}{4}\rfloor}$.
Let $S=S_1S_2$ be a generalized $\tau$-mutation sequence, factored as in Definition \ref{def:prism}.  We assume that $S_1$ and $S_2$ are both of even length and work in the case $i-j \equiv 1 \mod 3$.  By Remark \ref{rem:order123}, we can assume in this case that we have
$Z^{S} = [z_{i}^{j,k+1}, z_{i}^{j,k}, z_{i-1}^{j+1,k}, z_{i-1}^{j+1,k+1}, z_{i}^{j+1,k+1}, z_{i}^{j+1,k}]$.  Induction and the statement of Theorem \ref{thm:explicit} yields
$$Z^{S} = [x_1 \ABC_i^j\mathcal{D}^{k+1}, ~x_2 \ABC_i^j\mathcal{D}^{k}, ~x_3 \ABC_{i-1}^{j+1}\mathcal{D}^{k}, ~x_4 \ABC_{i-1}^{j+1}\mathcal{D}^{k+1}, ~x_5 \ABC_i^{j+1}\mathcal{D}^{k+1}, ~x_6 \ABC_i^{j+1}\mathcal{D}^{k}].$$
Since we are assuming that $S_2$ is of even length, we may assume also that $k$ is even.

The cases where $i - j \equiv 0$ or $2 \mod 3$, $S_1$ is of odd length, or $S_2$ is of odd length can be handled by similar logic.  We make comments below pointing out how to change the proofs in these other cases.

\vspace{0.5em}

By the definition of mutation by $\mu_1$, $\mu_4$, or $\mu_5$, we get recurrences relating cluster variables $z_i^{j,k}$'s together:
{\footnotesize
\begin{eqnarray}
\label{eq:Rec1} z_{i-1}^{j+2,k} z_i^{j,k+1} &=& z_{i-1}^{j+1,k} z_i^{j+1,k+1} + z_{i-1}^{j+1,k+1} z_i^{j+1,k} = (x_3 x_5 + x_4 x_6)~ \ABC_{i-1}^{j+1} ~\ABC_i^{j+1} \mathcal{D}^k \mathcal{D}^{k+1}, \\
\label{eq:Rec2} z_{i+1}^{j,k} z_{i-1}^{j+1,k+1} &=& z_{i}^{j,k} z_i^{j+1,k+1} + z_{i}^{j,k+1} z_i^{j+1,k} = (x_2x_5 + x_1x_6)~ \ABC_{i}^{j} ~\ABC_i^{j+1} \mathcal{D}^k \mathcal{D}^{k+1}, \mathrm{~and~}\\
\label{eq:Rec3} z_{i-1}^{j,k} z_i^{j+1,k+1} &=&  z_i^{j,k+1} z_{i-1}^{j+1,k} +  z_i^{j,k} z_{i-1}^{j+1,k+1} = (x_1x_3 + x_2x_4)~ \ABC_i^{j} ~\ABC_{i-1}^{j+1} \mathcal{D}^k \mathcal{D}^{k+1}.
\end{eqnarray}
}
\vspace{-1em}

Inductively, one of the factors on the LHS is already of the desired form (e.g. $z_{i}^{j, k+1} =  x_2\ABC_i^j \mathcal{D}^{k+1}$,
$ z_{i-1}^{j+1,k+1}  = x_3 \ABC_{i-1}^{j+1}\mathcal{D}^{k+1}$, or
 $z_{i}^{j+1,k+1}  = x_6 \ABC_i^{j+1}\mathcal{D}^{k+1}$) and to verify that the inductive step continues to hold as we mutate by $\mu_1$, $\mu_4$, or $\mu_5$, it suffices to verify that
\begin{eqnarray*}
z_{i-1}^{j+2,k} z_i^{j,k+1} = x_1 x_2 \ABC_{i-1}^{j+2} ~\ABC_i^{j} \mathcal{D}^k \mathcal{D}^{k+1}  &=& (x_3 x_5 + x_4 x_6)~ \ABC_{i-1}^{j+1} ~\ABC_i^{j+1} \mathcal{D}^k \mathcal{D}^{k+1}, \\
z_{i+1}^{j,k} z_{i-1}^{j+1,k+1} = x_3x_4 \ABC_{i+1}^{j} ~\ABC_{i-1}^{j+1} \mathcal{D}^k \mathcal{D}^{k+1} &=& (x_2x_5 + x_1x_6)~ \ABC_{i}^{j} ~\ABC_i^{j+1} \mathcal{D}^k \mathcal{D}^{k+1}, \mathrm{~and~}\\
z_{i-1}^{j,k} z_i^{j+1,k+1} = x_5x_6 \ABC_{i-1}^{j} ~\ABC_i^{j+1} \mathcal{D}^k \mathcal{D}^{k+1}  &=&  (x_1x_3 + x_2x_4)~ \ABC_i^{j} ~\ABC_{i-1}^{j+1} \mathcal{D}^k \mathcal{D}^{k+1}.
\end{eqnarray*}
By cross-multiplying, we reduce the problem to showing
$$\frac{\ABC_{i-1}^{j+2} ~\ABC_i^{j}}{\ABC_{i-1}^{j+1} ~\ABC_i^{j+1}}   = \frac{x_3 x_5 + x_4 x_6}{x_1 x_2}, ~~
\frac{\ABC_{i+1}^{j} ~\ABC_{i-1}^{j+1}}{\ABC_{i}^{j} ~\ABC_i^{j+1}}   = \frac{x_2 x_5 + x_1 x_6}{x_3 x_4}, ~~
\frac{\ABC_{i-1}^{j} ~\ABC_i^{j+1}}{\ABC_i^{j} ~\ABC_{i-1}^{j+1} }   = \frac{x_1 x_3 + x_2 x_4}{x_1 x_2}$$
Continuing to assume that $i-j \equiv 1 \mod 3$, and using the identities of Lemma \ref{lem:ABCDE}, we see that the LHS's equal $A^1B^0C^0$, $A^0B^1C^0$, and $A^0B^0C^1$, respectively.  This agrees with the RHS's by definition.  The proofs for $i-j \equiv 2$ or $0 \mod 3$ are handled similarly using the identities of Lemma \ref{lem:ABCDE} but with cyclic permutations of the initial cluster variables appearing in Equations (\ref{eq:Rec1})-(\ref{eq:Rec3}).

Mutations by $\mu_2$, $\mu_3$, or $\mu_6$ are analogous except with $z_{i-1}^{j+2,k+1} z_i^{j,k}$,
$z_{i+1}^{j,k+1} z_{i-1}^{j+1,k}$, or $z_{i-1}^{j,1} z_i^{j+1,k}$ on the LHS instead.  Doing these in pairs plus a transposition corresponds to the $\tau$-mutation sequences $\tau_1$, $\tau_2$, or $\tau_3$.

If $S_1$ is of odd length, then as in Remark \ref{rem:Delta}, we use the SW-pointed triangular prism instead and this just reverses the direction of the triangular flips, meaning that the factor on the LHS that is inductively known is the one with coordinate $k$ rather than $(k+1)$.  However, the proof is otherwise unaffected.

On the other hand, the $\tau_4$ and $\tau_5$-mutation sequences are more complicated.  Assuming that $i - j \equiv 1 \mod 3$ and $k$ is even, then $\tau_4 = \mu_1 \circ \mu_4 \circ \mu_1 \circ \mu_5 \circ \mu_1 \circ (145)$ corresponds to the
sequence of recurrences:
\begin{eqnarray}
\label{eq:kRec1}
z_{i-1}^{j+2,k}z_i^{j,k+1} &=& z_{i-1}^{j+1,k} z_i^{j+1,k+1} + z_{i-1}^{j+1,k+1} z_i^{j+1,k} \sim (\ref{eq:Rec1})\\
\label{eq:kRec2}
z_{i-1}^{j+1,k+1} z_{i}^{j+1,k-1} &=& z_{i-1}^{j+2,k} z_i^{j,k} + z_{i-1}^{j+1,k} z_i^{j+1,k} \\
\label{eq:kRec3}
z_{i-1}^{j+2,k} z_{i+1}^{j,k} &=& z_{i}^{j+1,k-1} z_i^{j+1,k+1} + (z_{i}^{j+1,k})^2 \\
\label{eq:kRec4}
z_{i}^{j+1,k+1} z_{i}^{j,k-1} &=& z_{i+1}^{j,k} z_{i-1}^{j+1,k} + z_{i}^{j,k} z_i^{j+1,k} \\
\label{eq:kRec5}
z_{i-1}^{j+1,k-1}z_{i+1}^{j,k}  &=& z_{i}^{j,k} z_i^{j+1,k-1} + z_{i}^{j,k-1} z_i^{j+1,k} \sim (\ref{eq:Rec2})
\end{eqnarray}
Notice that in the last recurrence, because of the cyclic permutation, it is as if we are mutating by $\mu_4$ rather than $\mu_1$ hence why this recurrence looks like (\ref{eq:Rec2}) instead of (\ref{eq:Rec1}).  Furthermore, focusing on the $(i,j)$-coordinates, the three terms in (\ref{eq:kRec2}) are a rearrangement of the terms in (\ref{eq:Rec1}) while the three terms in (\ref{eq:kRec4}) are a rearrangement of the terms in (\ref{eq:Rec2}).  (The situation for $\tau_5$ is analogous and left to the reader.)

Equations (\ref{eq:kRec1}) and (\ref{eq:kRec5}) were handled above, thus we now use recurrence (\ref{eq:kRec2}) to show that the inductive hypothesis continues even as $k$ decreases.  Note that in Equation (\ref{eq:kRec2}), all cluster variables are known to have the desired explicit form except for
$z_i^{j+1,k-1}$.  Hence, just as above, it suffices to show that
$$x_4 x_5 ~ \ABC_{i-1}^{j+1} ~\ABC_i^{j} \mathcal{D}^{k+1} \mathcal{D}^{k-1}  =
x_2 x_2 ~ \ABC_{i-1}^{j+2} ~\ABC_i^{j} \mathcal{D}^k \mathcal{D}^{k}
+ x_3 x_6 ~ \ABC_{i-1}^{j+1} ~\ABC_i^{j+1} \mathcal{D}^k \mathcal{D}^{k}.
$$
Dividing by $x_4x_5\ABC_{i-1}^{j+1} ~\ABC_i^{j+1} \mathcal{D}^k \mathcal{D}^{k}
$ yields
$$\frac{\mathcal{D}^{k+1} \mathcal{D}^{k-1} }{\mathcal{D}^k \mathcal{D}^{k}} =
\frac{x_2 x_2}{x_4x_5} ~ \frac{\ABC_{i-1}^{j+2} ~\ABC_i^{j} }{\ABC_{i-1}^{j+1} ~\ABC_i^{j+1} }
+ \frac{x_3 x_6}{x_4x_5} = \frac{x_2^2 A + x_3x_6}{x_4x_5}.$$
Then recurrences (\ref{eq:fourthD}) and (\ref{eq:fourthE}) of Lemma \ref{lem:ABCDE}  yield
$D^1E^0$ on the LHS since we assumed that $k$ was even.
By algebraic manipulations, we see that

{\footnotesize $$D = \frac{x_2^2 A + x_3x_6}{x_4x_5} = \frac{x_3^2 B + x_2 x_6}{x_1x_5} = \frac{x_6^2 C + x_2x_3}{x_1x_4}, \mathrm{~and~}
E =  \frac{x_1^2 A + x_4x_5}{x_3x_6} = \frac{x_4^2 B + x_1 x_5}{x_2x_6} = \frac{x_5^2 C + x_1x_4}{x_2x_3}$$}

\noindent and so we have the desired equality.
The other expressions for $D$ and $E$ allow us to prove the result for other values of $i-j \mod 3$ and when $k$ is odd instead of even.

The recurrence (\ref{eq:kRec3}) allows us to proceed with the induction as well.  Assuming that we know the explicit formula holds for five out of these six cluster variables, it suffices to check
$$x_2 x_3 ~ \ABC_{i-1}^{j+2} ~\ABC_{i+1}^{j} \mathcal{D}^{k} \mathcal{D}^{k}  =
x_5 x_5 ~ \ABC_{i}^{j+1} ~\ABC_i^{j+1} \mathcal{D}^{k-1} \mathcal{D}^{k+1}
+ x_6 x_6 ~ \ABC_{i}^{j+1} ~\ABC_i^{j+1} \mathcal{D}^k \mathcal{D}^{k}
$$
After dividing through and multiplying top and bottom of the LHS by
$\ABC_i^j\ABC_{i-1}^{j+1}$, we get
$$x_2 x_3 ~
\frac{\ABC_{i-1}^{j+2} \ABC_i^j}{
\ABC_{i-1}^{j+1}\ABC_{i}^{j+1}} ~
\frac{\ABC_{i+1}^{j}\ABC_{i-1}^{j+1}}{
\ABC_i^{j} \ABC_i^{j+1}} =
x_5 x_5 ~  \frac{\mathcal{D}^{k-1} \mathcal{D}^{k+1}}
{\mathcal{D}^k \mathcal{D}^{k}}
+ x_6 x_6,
$$
which equals
$$ x_2 x_3 A B = x_5^2 D + x_6^2$$
when $i-j \equiv 1 \mod 3$ and $k$ is even, and this is easily shown.

Lastly, recurrence (\ref{eq:kRec4}) is a variant of recurrence (\ref{eq:kRec2}) and the inductive hypothesis continues by an analogous verification: i.e.
$$x_5 x_1 \ABC_i^{j+1} \ABC_{i}^j \mathcal{D}^{k+1} \mathcal{D}^{k-1} =
x_3 x_3 \ABC_{i+1}^{j} \ABC_{i-1}^{j+1} \mathcal{D}^k \mathcal{D}^k
+ x_2 x_6 \ABC_{i}^{j} \ABC_{i}^{j+1} \mathcal{D}^k \mathcal{D}^k$$
by cross-multiplication
$$x_5 x_1  \frac{\mathcal{D}^{k+1} \mathcal{D}^{k-1}}{\mathcal{D}^k \mathcal{D}^k } =
x_3 x_3 \frac{\ABC_{i+1}^{j} \ABC_{i-1}^{j+1}}{\ABC_i^{j+1} \ABC_{i}^j}
+ x_2 x_6 $$
and the algebraic fact that
$x_5x_1D = x_3^2 B + x_2 x_6$ (which is true when $i-j \equiv 1 \mod 3$ and $k$ is even).

We have demonstrated the validity of this formula for all cluster variables reachable by a generalized $\tau$-mutation sequence.  However, because of Lemma \ref{lem:gentoric}, the set of such cluster variables is the same as the set of cluster variables reachable by a toric mutation sequence.  Hence our proof is complete.
\end{proof}

\begin{remark} We later will give combinatorial formulas for $z_i^{j,k}$ which will provide alternative interpretations of the recurrences (\ref{eq:Rec1})-(\ref{eq:kRec5}) in terms of a technique known as Kuo's Graphical Condensation \cite{kuo1}.
\end{remark}

\section{Contours} \label{Sec:Contours}

In this section we describe a method for constructing subgraphs of the brane tiling $\mathcal{T}$ corresponding to the $dP_3$ Quiver.  This construction is a variant and extension of the Dragon Regions defined in  \cite{LaiNewDungeon}, as well as generalizing the Aztec Castles from \cite{LMNT}.

Given a $6$-tuple $(a,b,c,d,e,f) \in \mathbb{Z}^6$, we consider a {\bf (six-sided) contour} whose side-lengths are $a,b, \dots , f$ in clockwise order (starting from the Northeast corner).  See the right-hand-side of Figure \ref{fig:hexagon}.  In the case of a negative entry we draw the contour in the opposite direction for the associated side.  By abuse of notation, we will refer to such entries as {\bf lengths} even when they are negative.  Several different qualitatively different looking contours are illustrated in Figure \ref{fig:contours}.  Let $\mathcal{C}(a,b,c,d,e,f)$ denote the corresponding (six-sided) contour, which we abbreviate as {\bf contour} in the remainder of the paper.

\begin{figure}\centering
 \includegraphics[width=12cm]{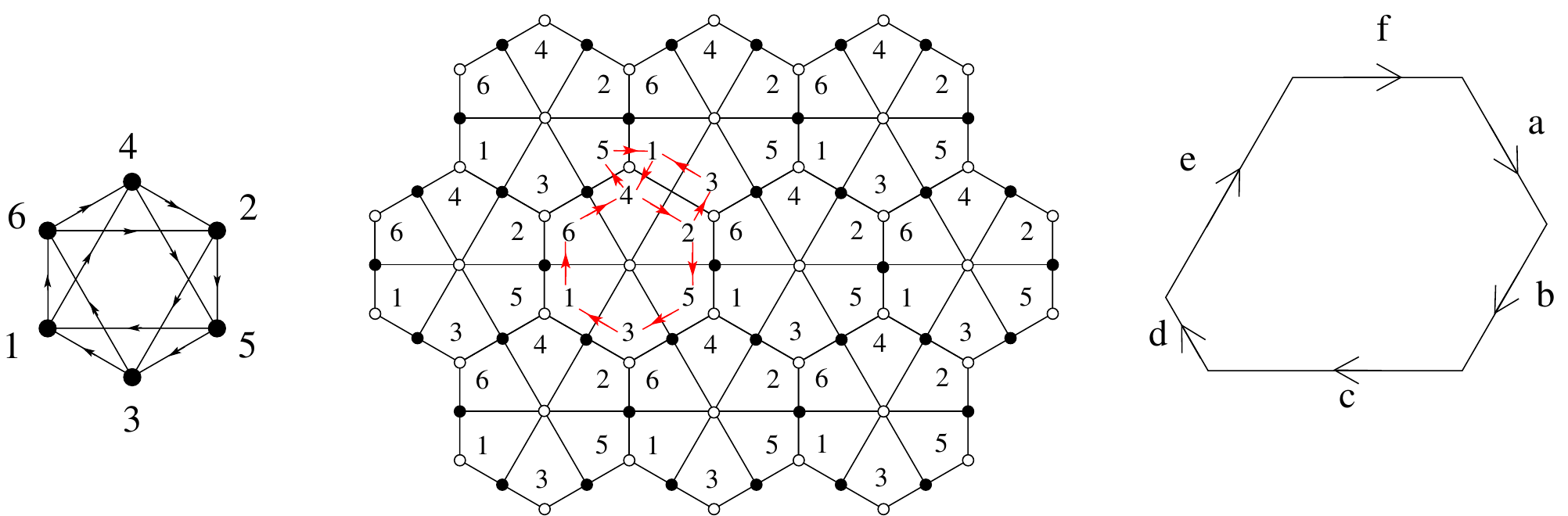} 
\caption{Illustrating the contour $\mathcal{C}(a,b,c,d,e,f)$ in the case that all entries are positive.}
\label{fig:hexagon}
\end{figure}

\begin{figure}
    \centering
    \scalebox{0.9}{\includegraphics[keepaspectratio=true, width=150 mm]{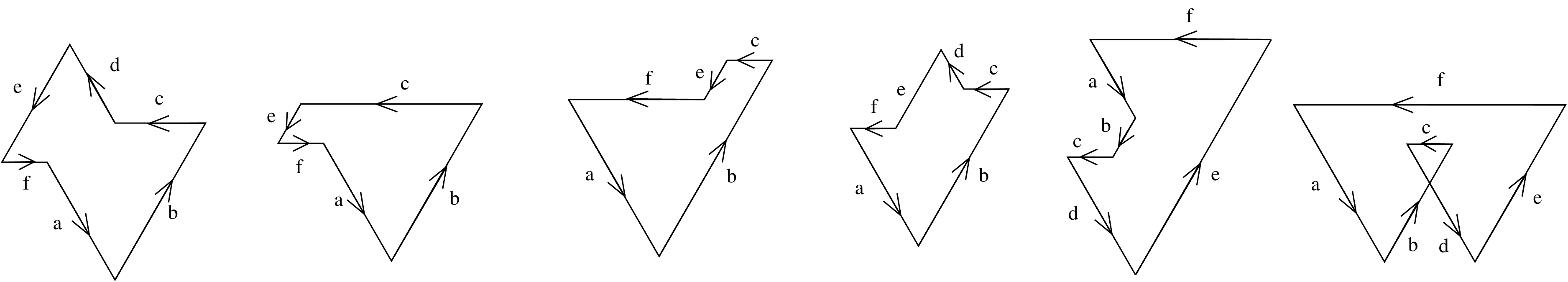}}
\caption{\small From left to right, the cases where $(a,b,c,d,e,f) = $ (1)  $(+, -, + , +, -, +)$, (2) $(+, -, +, 0, -, +)$, (3) $(+, -, +, 0, -, -)$, 
(4)$(+, -, + , +, -, -)$,  (5) $(+, +, +, -, +, -)$, (6) $(+, -, +, -, +, -)$.}
    \label{fig:contours}
\end{figure}

\begin{definition} [\textbf{Subgraphs $\widetilde{\mathcal{G}}(a,b,c,d,e,f)$ and $\mathcal{G}(a,b,c,d,e,f)$}]
\label{def:subgraphs} Suppose that the contour $\mathcal{C}(a,b,c,d,e,f)$ does not intersect itself.  (See the last picture in Figure \ref{fig:contours} for an example of a contour with self-intersections.) Under this assumption, we use the contour $\mathcal{C}(a,b,c,d,e,f)$ to define two subgraphs, which are cut-out by the contour, by the following rules:

Step 1: The brane tiling $\mathcal{T}$ consists of a subdivided triangular lattice.  We superimpose the contour $\mathcal{C}(a,b,c,d,e,f)$ on top of $\mathcal{T}$ so that its sides follow the lines of the triangular lattice, beginning the contour at a white vertex of degree $6$.  In particular, sides $a$ and $d$ are tangent to faces $1$ and $2$, sides $b$ and $e$ are tangent to faces $5$ and $6$, and sides $c$ and $f$ are tangent to faces $3$ and $4$.  We scale the contour so that a side of length $\pm 1$ transverses two edges of the brane tiling $\mathcal{T}$, and thus starts and ends at a white vertex of degree $6$ with no such white vertices in between.

Step 2: For any side of positive (resp. negative) length, we remove all black (resp. white) vertices along that side.

Step 3: A side of length zero corresponds to a single white vertex.  If one of the adjacent sides is of negative length, then that white vertex is removed during step 2.  On the other hand, if the side of length zero is adjacent to two sides of positive length, we keep the white vertex.  However, as a special case, if we have three sides of length zero in a row, we instead remove that white vertex\footnote{The reader might wonder what happens if we have two adjacent sides of length zero (between two positive values) or four or more adjacent sides of length zero.  As shown in Figures \ref{fig:decomposition} and \ref{fig:decomposition2}, for the subgraphs we care about in this paper, such a case cannot occur.}.

Step 4: We define $\widetilde{\mathcal{G}}(a,b,c,d,e,f)$ to be the resulting subgraph, which will contain a number of black vertices of valence one.  After matching these up with the appropriate white vertices and continuing this process until no vertices of valence one are left, we obtain a simply-connected graph $\mathcal{G}(a,b,c,d,e,f)$ which we call the {\bf Core Subgraph}, following notation of \cite{BMPW}.
\end{definition}

\begin{remark}
In the case that $(a,b,c,d,e,f) = (+,-,+,+,-,\pm)$, these subgraphs (without face-labels) agree with the $DR^{(1)}(a,-b,c)$ dragon regions of \cite{LaiNewDungeon}. Similarly, $(a,b,c,d,e,f) = (-,+,-,-,+,\pm)$ agree with the $DR^{(2)}(-a,b,-c)$ dragon regions.
\end{remark}

\begin{remark} \label{Rem:LMNT}
In the case that $a+b+c = 0$ or $1$, after negating $b$ and $e$, followed by subtracting $1$ from each entry, recovers the NE Aztec Castles in Section 3 of \cite{LMNT}.  The SW Aztec Castles in Section 4 of \cite{LMNT} are obtained analogously, but using the opposite sign convention and a different translation.
In particular, using the notation of \cite{LMNT}, with $i, j \geq 0$,
\begin{eqnarray*}\gamma_i^j &=& \mathcal{G}(j, -i-j, i, j+1, -i-j-1, i+1), \mathrm{~and} \\
\widetilde{\gamma}_{-i}^{-j} &=& \mathcal{G}(-j+1, i+j, -i, -j, i+j+1, -i-1). \end{eqnarray*}
\end{remark}

\begin{example} \label{ex:0}
 Here we provide examples of subgraphs arising from contours for each of the first five cases appearing in Figure \ref{fig:contours}.
The subgraphs illustrated are

(1) $\mathcal{G}(3, -4, 2, 2, -3, 1)$, (2) $\mathcal{G}(1, 1, 1, -4, 6, -4)$, (3) $\mathcal{G}(5,-6,4,0,-1,-1)$,

(4) $\mathcal{G}(6,-7,4,1,-2,-1)$, and (5) $\mathcal{G}(5,-8,6,0,-3,1)$, respectively.

\noindent See Figure \ref{fig:ex0}.

\end{example}

\begin{figure}
\includegraphics[width=5.3in]{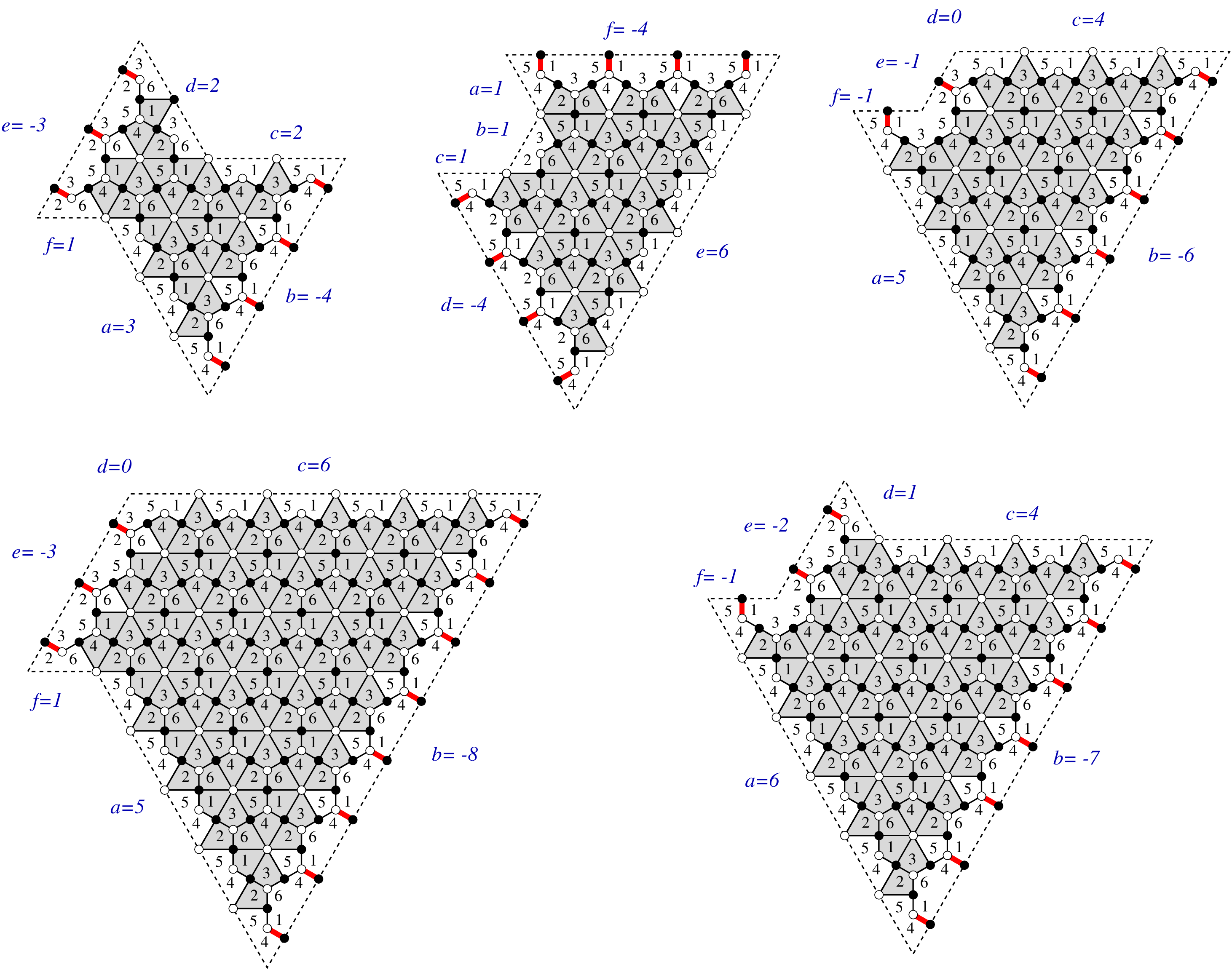}
\caption{The subgraphs associated to Example \ref{ex:0}.}
\label{fig:ex0}
\end{figure}

\begin{example} \label{ex:Dragons}
We can also produce Aztec Dragons \cite{perfect,CY,EnumPropp} in this notation.
Let $\sigma\mathcal{C}(a,b,c,d,e,f) = \mathcal{C}(a+1, b-1, c+1, d-1, e+1, f-1)$. We denote by
$\mathcal{C}_i^j$ the contour $\mathcal{C}(j, -i-j, i, j+1, -i-j-1, i+1)$.
For each $n \in \mathbb{Z}_{\geq 0}$,
\[D_{n + 1/2} = \mathcal{G}(\mathcal{C}_{-1}^{n+1}) = \mathcal{G}(n+1, -n, -1, n+2, -n-1, 0),\]
\[D_n = \mathcal{G}(\sigma\mathcal{C}_{0}^{n}) = \mathcal{G}(n+1, -n-1, 1, n, -n, 0).\]
See Figure \ref{fig:AztecDragons}.

As a special case of Definition \ref{def:6tuple}, we will see that $D_{n}$'s and $D_{n+1/2}$'s arise from the $\tau$-mutation sequence $\tau_1\tau_2\tau_3\tau_1\tau_2\dots$ continued periodically.
This $\tau$-mutation sequence corresponds to a vertical translation in the square lattice $L^\Delta$.
\end{example}

\begin{figure}
\includegraphics[width=5.3in]{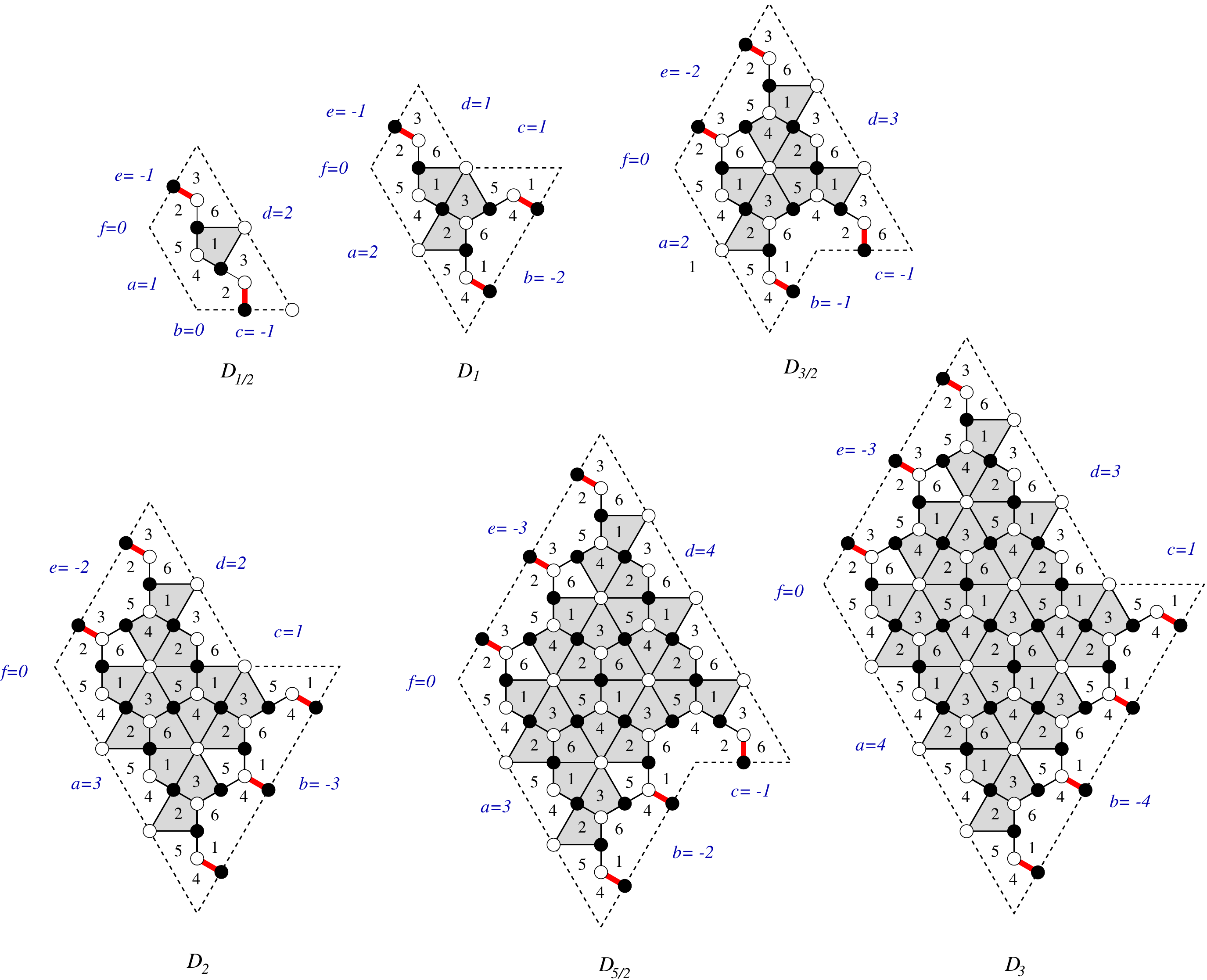}
\caption{The Aztec Dragon constructed as examples of $\mathcal{G}(a,b,c,d,e,f)$'s.  See Example \ref{ex:Dragons}.}
\label{fig:AztecDragons}
\end{figure}

\begin{example} \label{ex:antiparallel}
Another possible contour arises when we have two sides of length zero in a row.  On first glance, the two contours given below appear to be triangles with all sides of length $5$.  However, these
are actually degenerate quadrilaterals where two adjacent sides happen to be anti-parallel.  In particular, along one of the three sides of this contour, the pattern of including and excluding white and black vertices switches, signifying the invisible corner of an angle of $180^\circ$.  Thus, when building subgraphs corresponding to a contour we note that
$\mathcal{G}(5,-5,5,0,0,0)$, $\mathcal{G}(5,-5,4,0,0,-1)$, and $\mathcal{G}(5,-5,5,3,0,-2)$ would all be different subgraphs of $\mathcal{T}$.  We have illustrated
$\mathcal{G}(5,-5,3,0,0,-2)$ and $\mathcal{G}(-5,5,-2,0,0,3)$ in Figure \ref{fig:antiparallel}.
\end{example}

\begin{figure}
\includegraphics[width=5.3in]{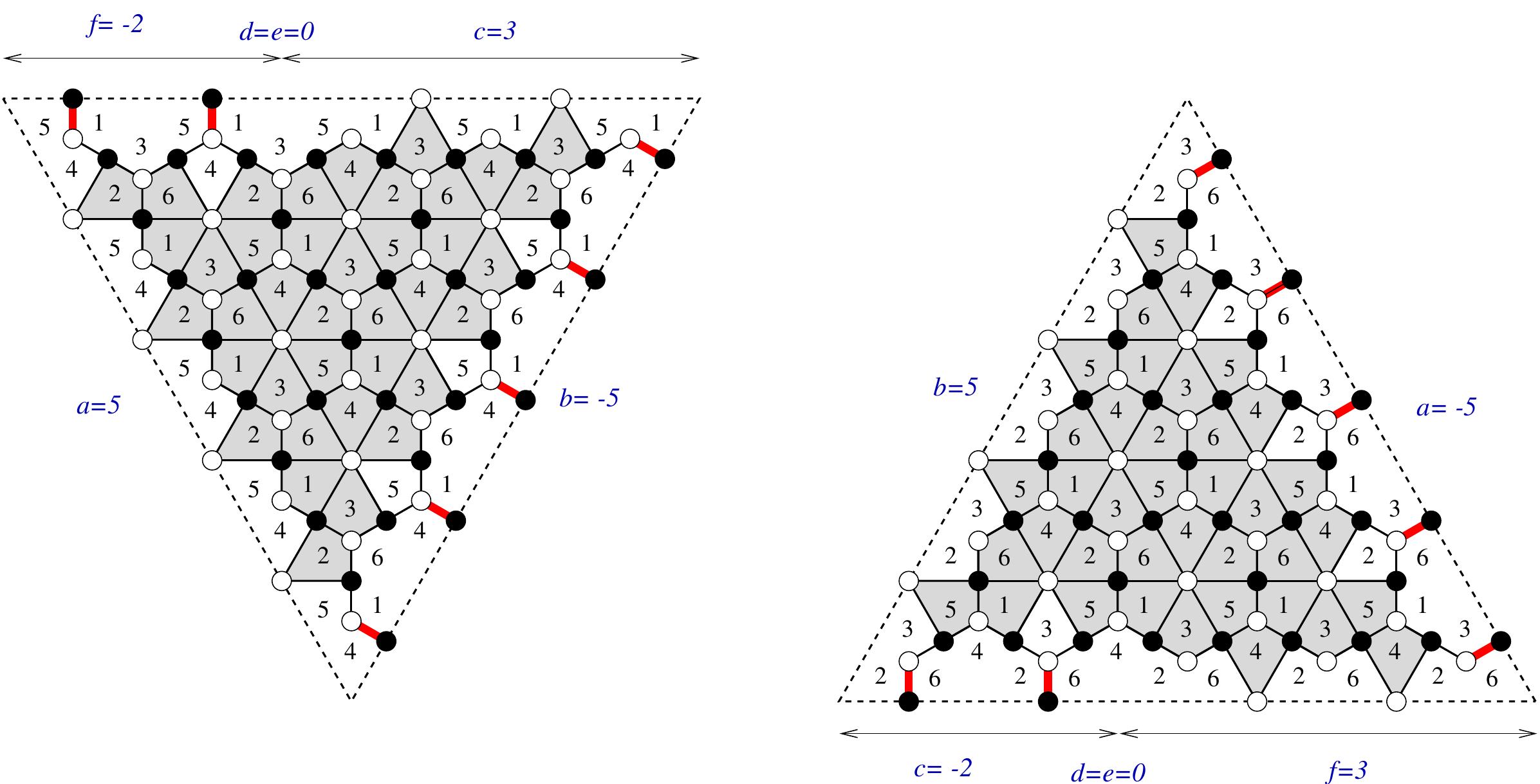}
\caption{Examples of graphs obtained from contours which are degenerate quadrilaterals, as described in Example \ref{ex:antiparallel}.}
\label{fig:antiparallel}
\end{figure}

\section{From Mutations to Subgraphs} \label{sec:subgraphs}

In this section, we come to our second main result, Theorem \ref{thm:main}, which provides a combinatorial interpretation for the Laurent polynomials $z_{i}^{j,k}$ defined in Theorem \ref{thm:explicit}.  Combining this with Definition \ref{def:prism}, this yields a direct combinatorial interpretation for most cluster variables reachable by a toric mutation sequence (using Lemma \ref{lem:gentoric} and the geometry of the $\mathbb{Z}^3$ lattice described in Section \ref{sec:gentoric}).

Motivated by the definition of NE and SW Aztec Castles, see Remark \ref{Rem:LMNT}, we extend the definition to three dimensions as follows.

\begin{definition} \label{Def:LMNT}
For all $i,j \in \mathbb{Z}$, we let
$$\mathcal{C}_i^j \mathrm{~be~the~contour~} \mathcal{C}(j, -i-j, i, j+1, -i-j-1, i+1).$$ Recall the map from Example \ref{ex:Dragons}, we let $\sigma^k\mathcal{C} = \mathcal{C}(a+k, b-k, c+k, d-k, e+k, f-k)$ for $\mathcal{C} = \mathcal{C}(a,b,c,d,e,f)$.
Combining this together, we let $$\mathcal{C}_{i}^{j,k} = \mathcal{C}(j+k, -i-j-k, i+k, j+1-k, -i-j-1+k, i+1-k).$$
\end{definition}

\begin{remark}In particular, note that $\sigma\mathcal{C}_i^j = \mathcal{C}(j+1, -i-j-1, i+1,j, -i-j, i)$.  This agrees with the fact that $\sigma \gamma_i^j$, was defined as $\gamma_i^j$ after $180^\circ$ rotation in \cite{LMNT}.  Furthermore, the SW Aztec Castles of \cite{LMNT} can
be described as $\widetilde{\gamma}_{-i}^{-j}~~=~~ \sigma\mathcal{C}_{-i-1}^{-j}$ and $\sigma\widetilde{\gamma}_{-i}^{-j}~~=~~ \mathcal{C}_{-i-1}^{-j}.$
\end{remark}

Definition \ref{Def:LMNT} motivates us to define the map $\phi: \mathbb{Z}^3 \to \mathbb{Z}^6$ defined by $\mathcal{C}_i^{j,k} = \mathcal{C}(a,b,c,d,e,f)$.  In other words,
$$a = j + k, ~~~b = -i-j-k, ~~~ c=  i+k, ~~~ d = j-k +1, ~~~ e = -i-j+k -1, ~~~ f = i - k +1.$$

\begin{lemma} \label{lem:closeup}
Out of all possible contours $\mathcal{C}(a,b,c,d,e,f)$, those which are of the form $\mathcal{C}_i^{j^k} = \mathcal{C}(j+k, -i-j-k, i+k, j+1-k, -i-j-1+k, i+1-k)$ actually comprise all contours that  (i) close up; and (ii) either have a self-intersection or yield a subgraph such that the number of black vertices is the number of white vertices.
\end{lemma}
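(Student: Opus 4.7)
The plan is to prove the set equality $\phi(\mathbb{Z}^3) = \{\mathcal{C}(a,b,c,d,e,f) : \text{(i) and (ii) hold}\}$, where $\phi$ sends $(i,j,k)$ to the six-tuple defining $\mathcal{C}_i^{j,k}$. First, I will characterize closed contours. Writing closure as vanishing of the vector sum of directed edges and using the dependence $\vec v_2 = \vec v_1 + \vec v_3$ among the three alternating edge directions of a regular hexagon, closure collapses to
\[ a - d \;=\; -(b - e) \;=\; c - f \;=:\; m, \]
so closed contours form a four-parameter family $(a,b,c,m)$ with $(d,e,f) = (a-m,\, b+m,\, c-m)$. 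Direct substitution shows that $\phi(i,j,k)$ satisfies these relations with $m = 2k - 1$ and $a+b+c = k$, and $\phi$ is invertible via $i = (c+f-1)/2$, $j = (a+d-1)/2$, $k = (a-d+1)/2$. Since the sum $S := a+b+c+d+e+f$ simplifies to $2(a+b+c) - m$ on any closed contour, I conclude that $\phi(\mathbb{Z}^3)$ is precisely the sublattice of closed contours with $S = 1$.

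Next, I connect (ii) to this arithmetic condition. For a non-self-intersecting closed contour, the key claim is that the bipartite imbalance $|W(\mathcal{G})| - |B(\mathcal{G})|$ equals a nonzero integer multiple of $1 - S$, so the imbalance vanishes if and only if $S = 1$. This follows from two observations: (a) equal densities of white and black vertices in the doubly-periodic bipartite tiling $\mathcal{T}$ force the bulk contribution to cancel, leaving only a boundary term, and (b) the boundary contribution is linear in the signed side-lengths with only small corrections from Step 3 of Definition \ref{def:subgraphs} at length-zero sides, hence a linear function of $(a,b,c,m)$. The proportionality constant is pinned down by evaluating on known balanced examples in $\phi(\mathbb{Z}^3)$, such as the Aztec Dragons of Example \ref{ex:Dragons}, together with one unbalanced instance such as the regular hexagon $\mathcal{G}(n,n,n,n,n,n)$ (for which $S = 6n \neq 1$ when $n \neq 0$).

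For self-intersecting closed contours, $\mathcal{G}$ is not defined and condition (ii) is automatically satisfied via the self-intersection disjunct, so the remaining task is to show that self-intersection is only compatible with $S = 1$. I will argue that a closed six-sided contour self-intersects only when at least two of the three opposite pairs $(a, d)$, $(b, e)$, $(c, f)$ contain entries of opposite signs; combined with the closure relation $a - d = c - f = -(b - e) = m$, this alternation is compatible only when $m = 2(a+b+c) - 1$, placing the contour in $\phi(\mathbb{Z}^3)$. Conversely, a closed contour with $S \neq 1$ has all opposite pairs of consistent signs and so traces out a simple, possibly non-convex but non-self-intersecting, hexagon.

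The principal obstacle is the balance formula. While linearity and bulk cancellation determine its \emph{form} up to a multiplicative constant, pinning down that constant and ruling out additive corrections requires a careful case analysis of corner behaviors under Steps 2--4 of Definition \ref{def:subgraphs}, especially for length-zero sides. I expect to treat the convex-hexagon case (all sides positive) first and then propagate by one-sign-flip inductions, using the symmetry of $\mathcal{T}$ under the $(1\,3\,5)(2\,4\,6)$ face-label rotation to reduce casework to essentially one new computation per qualitative shape in Figure \ref{fig:contours}.
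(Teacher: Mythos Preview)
Your reduction of closure to the single parameter $m=a-d=-(b-e)=c-f$ and the identification of $\phi(\mathbb{Z}^3)$ with the sublattice $S:=a+b+c+d+e+f=1$ is correct and matches the paper. The gap is in your argument for the balance condition.

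You assert that the imbalance $|W|-|B|$ is a nonzero multiple of $1-S$, obtained by ``bulk cancellation plus a linear boundary term.'' But the boundary contribution is not linear in the side lengths. Following the deletion rules of Definition~\ref{def:subgraphs}, a side of positive length $p$ removes $p$ black vertices, while a maximal run of negative sides of total length $-n$ removes $n+1$ white vertices (the extra $+1$ coming from the shared corner vertex). If $k$ denotes the number of such negative segments, the imbalance works out to $1+S-k$, and $k$ is a piecewise-constant, sign-pattern-dependent quantity, not a linear function of $(a,b,c,m)$. Your ``linearity plus evaluation at examples'' strategy therefore cannot determine the formula globally: the Aztec-dragon examples all live in sign patterns with $k=2$, so they cannot detect the $k$-term, while the all-positive hexagon $\mathcal{G}(n,n,n,n,n,n)$ has $k=0$ and imbalance $6n+1$, which is not a multiple of $1-6n$. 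The paper's proof confronts this head-on: it counts the removals directly and then argues, by running through the possible sign patterns of non-self-intersecting contours (Figures~\ref{fig:decomposition} and~\ref{fig:decomposition2}), that $k=2$ in every case, whence balance $\Leftrightarrow S=1$.

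Your treatment of the self-intersecting case is also off. You claim that self-intersection forces $S=1$ because ``at least two opposite pairs have entries of opposite signs'' and this is ``compatible only when $m=2(a+b+c)-1$.'' But opposite-sign conditions are inequalities, not equations, and cannot by themselves pin down the single linear relation $S=1$; closed self-intersecting contours with $S\neq 1$ exist (e.g.\ scale the $(+,-,+,-,+,-)$ pattern). The paper does not attempt to prove this direction either: it simply restricts the balance count to the non-self-intersecting case, which is all that is needed downstream.
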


\begin{proof}
First of all, since the plane has two directions, a contour must satisfy
$$a+b = d+e \mathrm{~~~and~~~} c+d = f + a$$
if it is going to close up.  These two relations also imply the equivalent third relation $b +c = e+f$.  Picking two of these three relations already restrict us from $\mathbb{Z}^6$ to $\mathbb{Z}^4$ (calculating the Smith normal form can be used to ensure that these relations do not introduce any torsion elements).  To reduce all the way to $\mathbb{Z}^3$, the balancing of the vertex colors yields the linearly independent relation
$$a+b+c+d+e+f=1$$ as we show now (again, no torsion is introduced by including this relation).

Firstly, note that a single triangle has three white vertices and three black vertices on the boundary, but an extra white vertex at its center.
We inductively build our contour by attaching a triangle to our shape along one or two sides.  Either way, this does not alter the difference between the number of white vertices and black vertices.

However, after building the full contour (which has one extra white vertex), we then remove vertices from the boundary as instructed by Definition \ref{def:subgraphs}.
Every side with positive length $p$ forces us to remove $p$ black vertices.  Every segment with total negative length $-n$ (by a segment we mean at least one, but possibly multiple consecutive sides, all of negative length with sides of positive length on either side) forces us to remove $(n+1)$ white vertices.

For the purposes of this proof, note that a segment of two or more zeros in a row between two sides of positive length also counts as a negative segment and would lead to removing exactly one white vertex.  (As indicated in Definition \ref{def:subgraphs}, this can only happen with three zeros in a row and such patterns only occur in contours corresponding to initial cluster variables).  On the other hand, a single side of length zero or a segment of a zeroes  adjacent to at least one side of negative length does not lead to the removal of any vertices of either color.

By considering all possible sign patterns without self-intersecting contours, see Figures \ref{fig:decomposition} and \ref{fig:decomposition2}, we see that there are always exactly two negative segments and hence we are removing
$2-a-b-c-d-e-f$ more white vertices than black vertices leading to a color-balanced subgraph if and only if we have $a+b+c+d+e+f=1$.
\end{proof}

\subsection{Possible shapes of Aztec Castles} \label{sec:shapes}

Since we have just shown that the contours $\mathcal{C}_i^{j,k}$ are of the most general form for our combinatorial purposes (i.e. we want subgraphs which have perfect matchings), we next describe the possible shapes of these contours.  In this section we work directly with the six-tuples $(a,b,c,d,e,f)$ rather than the contours $\mathcal{C}(a,b,c,d,e,f)$ themselves.  By direct computation we arrive at the following description: $32$ possible sign-patterns (excluding those with zeroes) organized into orbits of size $1$ or $6$ under the action of twisted rotation $\theta : (a,b,c,d,e,f) \to (-f, -a, -b, -c, -d, -e)$.  When $k \geq 1$ (resp. $k \leq 0$), we get only $19$ of these sign-patterns, $6$ of which are possible regardless of $k$.

1) First Possibility: $(a,b,c,d,e,f) = (+, -, -, +, -, -)$, $(-, +, +, -, +, +)$ or a cyclic rotation of one of these.
See Figure \ref{fig:shapes2}.

2) Second Possibility: $(a,b,c,d,e,f) = (+, -, -, +, +,  -)$, $(-, +, +, -, -, +)$ or a cyclic rotation.  See Figure \ref{fig:shapes1}.

3) Third Possibility: $(a,b,c,d,e,f) =  (+, -, -, -, +, -)$, $(-, +, +, +, -, +)$, or a cyclic rotation.  See Figure \ref{fig:shapes3}.

4) Lastly, there are degenerate cases which are a combination of two of these possibilities where one or more of the sides are length zero.

5) Six-tuples of the form $(a,b,c,d,e,f) = (+, -,+, -, +,-)$ or $(-, +, -, +, -, +)$ also appear, see Figure \ref{fig:shapes4}, but these always correspond to self-intersecting contours, which we do not give a combinatorial interpretation for in this paper.  This leaves a question for future work.  See Problem \ref{prob:self-int} in Section \ref{sec:open}.

\begin{figure}
\includegraphics[width=5.5in]{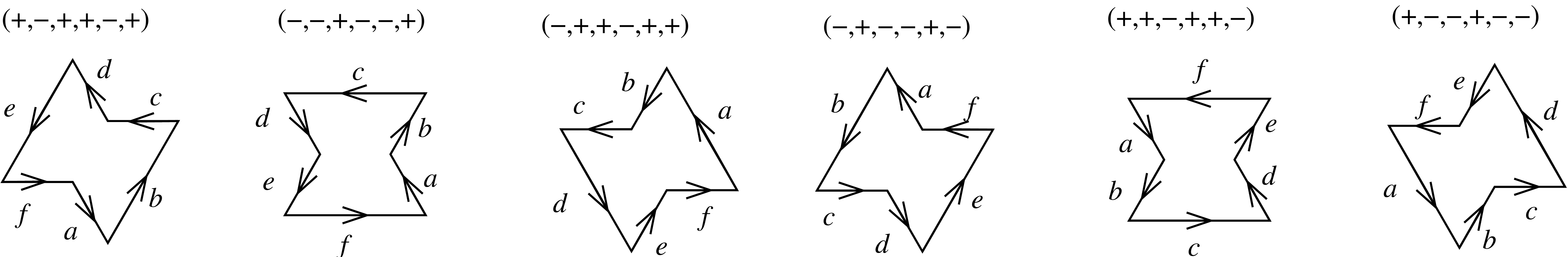}
\caption{Six unbounded sign-unbalanced regions.}
\label{fig:shapes2}
\end{figure}

\begin{figure}
\includegraphics[width=5in]{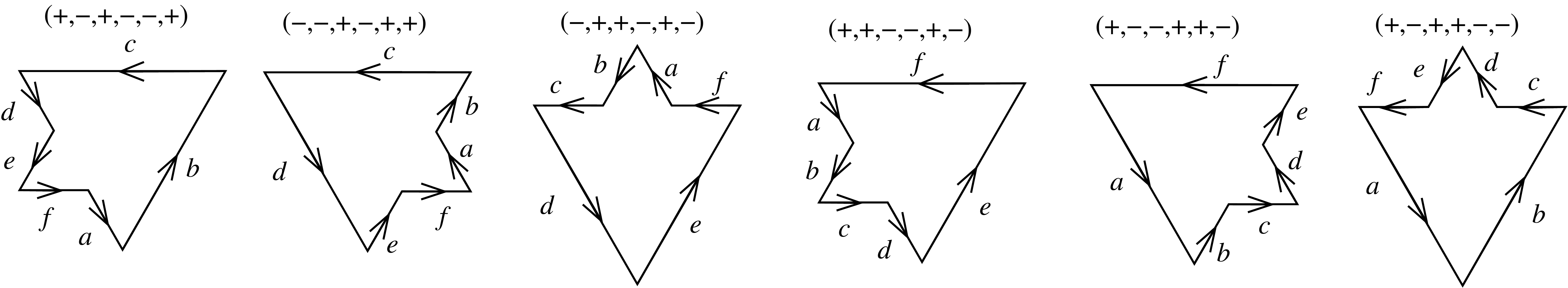}
\caption{Six unbounded sign-balanced regions.}
\label{fig:shapes1}
\end{figure}

\begin{figure}
\includegraphics[width=5in]{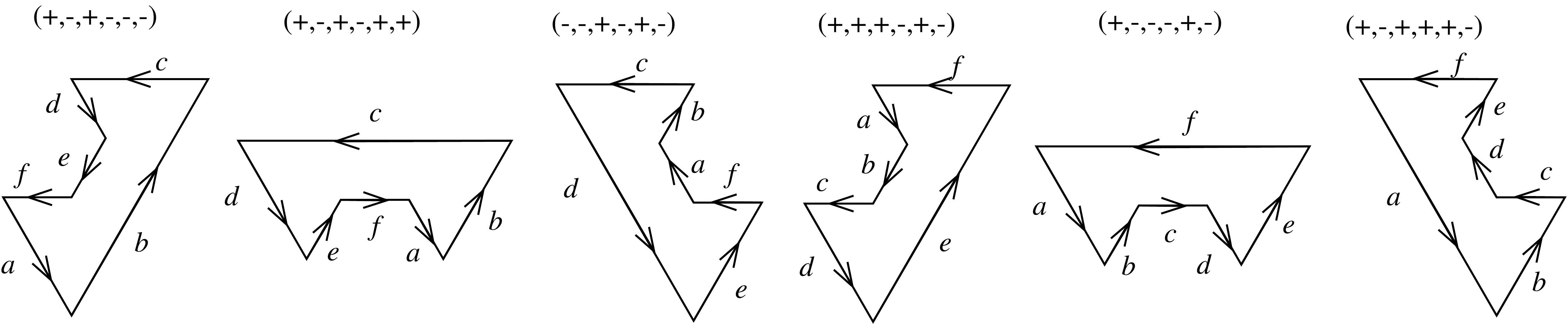}
\caption{Six bounded sign-unbalanced regions.}
\label{fig:shapes3}
\end{figure}

\begin{figure}
\includegraphics[width=2in]{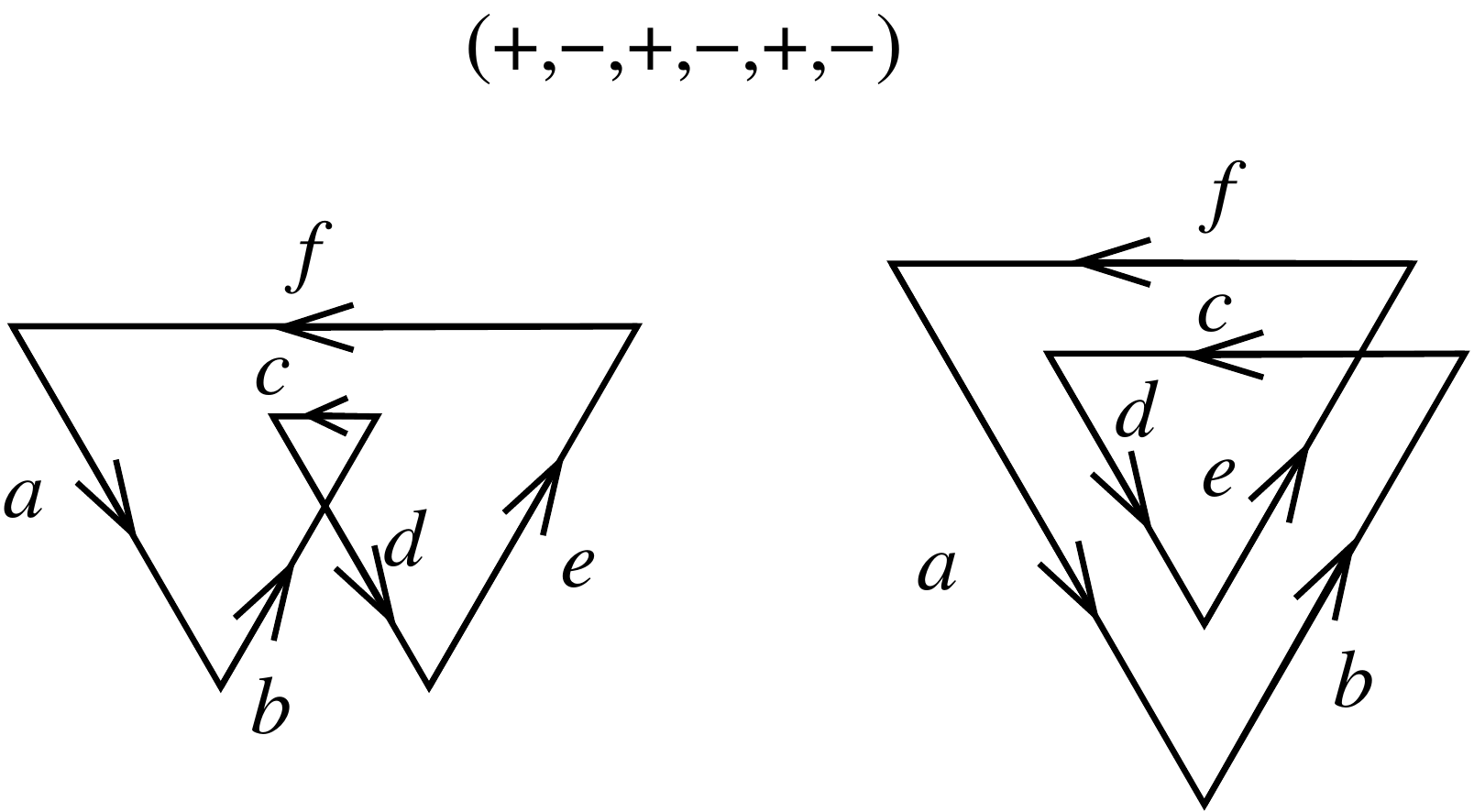}
\caption{One bounded sign-balanced region with the two types of self-intersections shown.}
\label{fig:shapes4}
\end{figure}

\begin{remark}
Figures \ref{fig:decomposition} and \ref{fig:decomposition2} illustrate how the map $\phi$ sends $(i,j,k)$ to these six-tuples.  For a fixed $k$, the map $\theta$ acts as $60^{\circ}$ clockwise rotation of $(i,j)$-plane.  These two-dimensional cross-sections motivates our terminology of unbounded and bounded regions for the various sign-patterns.
\end{remark}

\begin{remark}
If $ -2 \leq k \leq 3$, these bounded regions shrink to a point and some of the unbounded regions shrink to a line as there are no non-degenerate examples of those particular sign patterns.  This is why previous work \cite{LMNT}, which corresponded to the $k=0$ and $k=1$ cases involved only unbounded regions.  See Figure 5 of \cite{LMNT}. The decompositions for $k\leq -3$ and $k\geq 4$ are analogous to one another except for applying the negation map $(a,b,c,d,e,f) \to (-a,-b,-c,-d,-e,-f)$ everywhere.
\end{remark}

\begin{figure}
\includegraphics[width=5in]{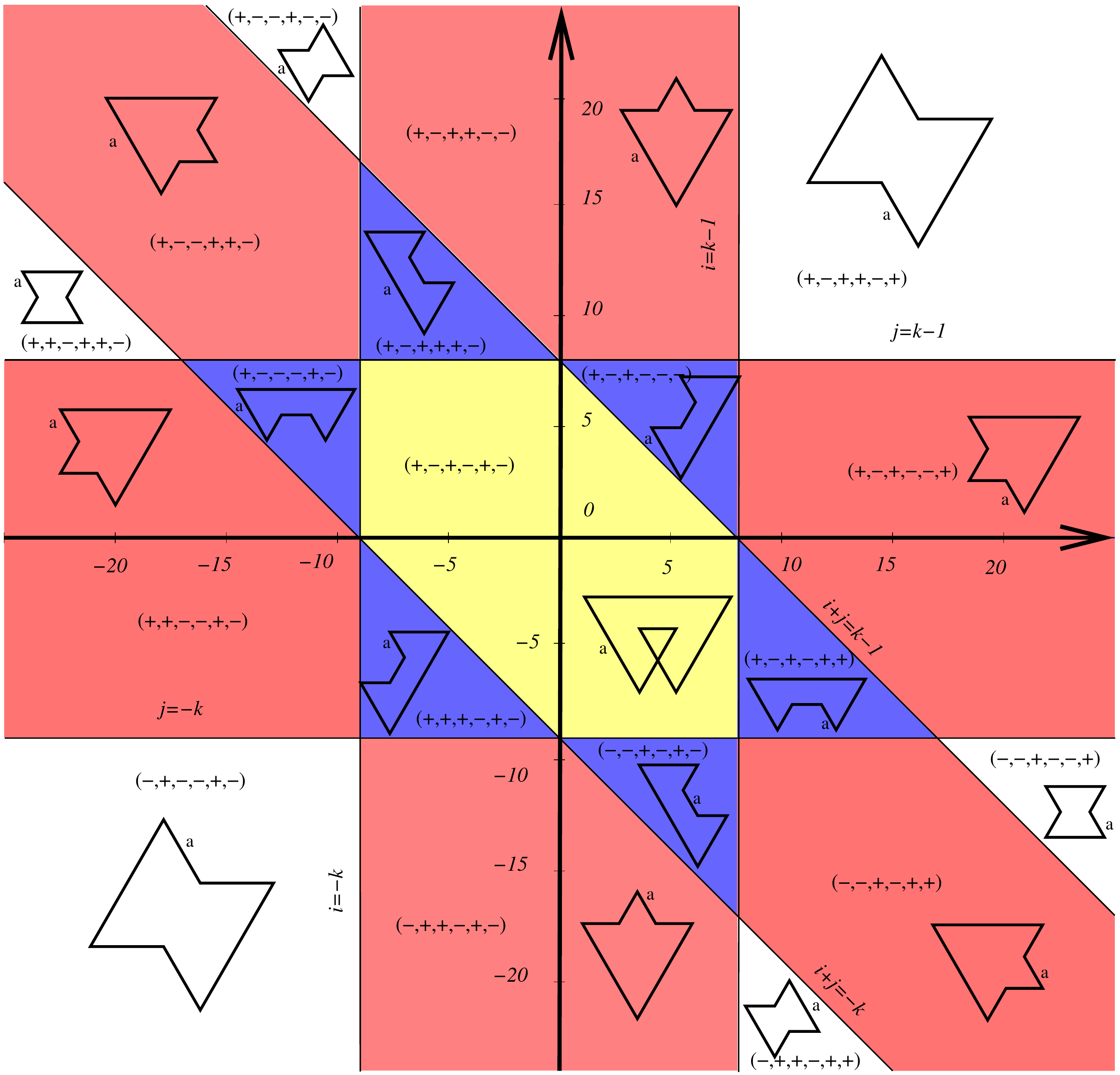}
\caption{Possible sign-patterns for a fixed $k \geq 1$, where the six lines illustrate the $(i,j)$-coordinates so that one of the elements of the $6$-tuple equals zero.}
\label{fig:decomposition}
\end{figure}

\begin{figure}
\includegraphics[width=5in]{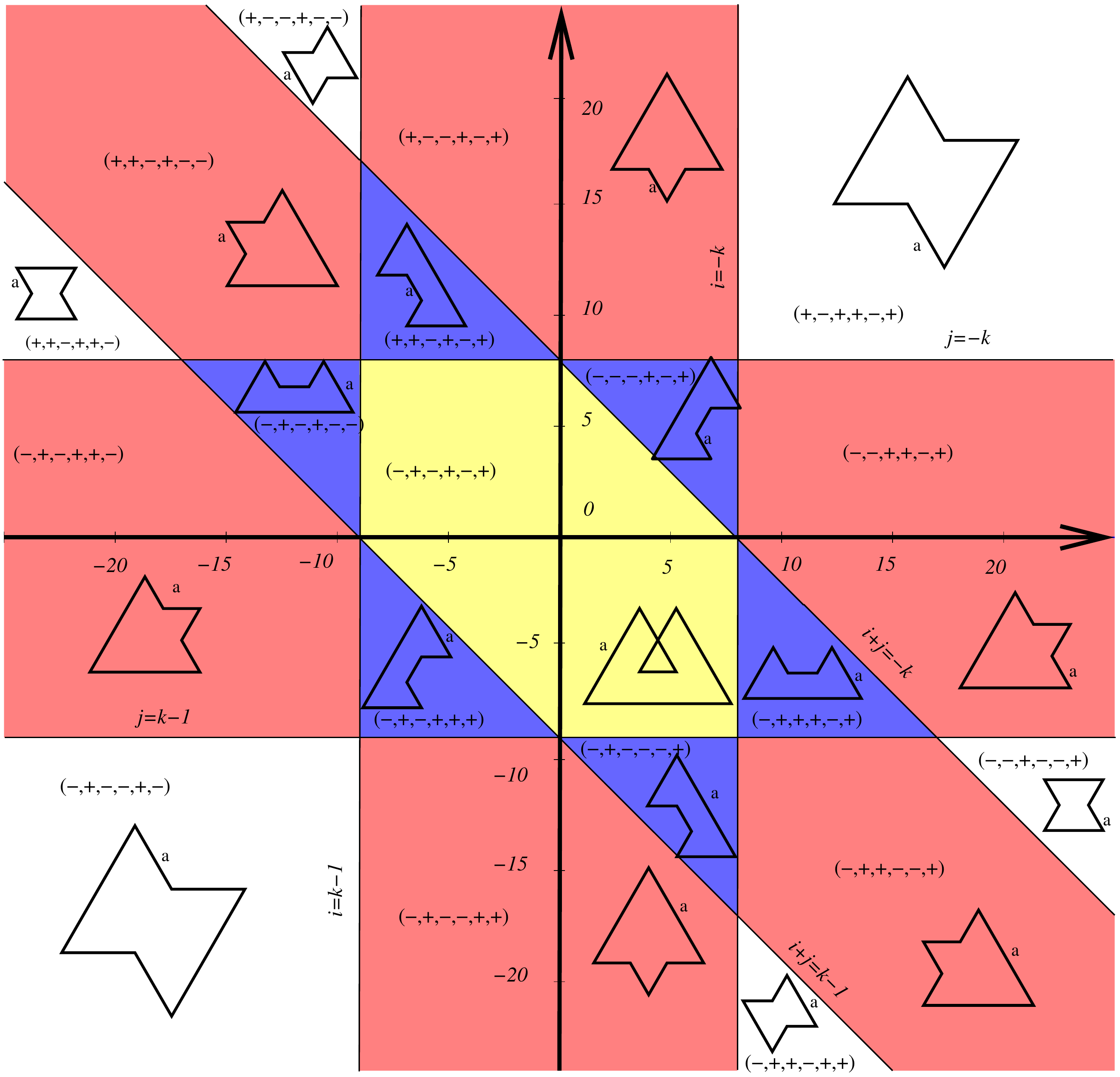}
\caption{Possible sign-patterns for a fixed $k \leq 0$.  Observe that the uncolored unbounded regions correspond to the same sign patterns as they did in the $k \geq 1$ case.}
\label{fig:decomposition2}
\end{figure}

\subsection{Combinatorial Interpretation of Ordered Clusters}\label{sec:comb}

In this section, we describe the weighting scheme that yields Laurent polynomials from the subgraphs defined in Section \ref{sec:subgraphs}.  In summary, given a toric mutation sequence $S$, we obtain a specific prism (or a well-defined deformation of a prism) in $\mathbb{Z}^3$, which corresponds to an ordered cluster $Z^S$.  Then each such lattice point $(i,j,k)$ corresponds via map $\phi$ to a $6$-tuple $(a,b,c,d,e,f)$.  Subsequently, the contour $\mathcal{C}_i^{j,k} = \mathcal{C}(a,b,c,d,e,f)$ yields a subgraph\footnote{As mentioned in Definition \ref{def:subgraphs}, this construction makes sense as long as $\mathcal{C}(a,b,c,d,e,f)$ has no self-intersections.} $\mathcal{G}(\mathcal{C}_i^{j,k}) = \mathcal{G}(a,b,c,d,e,f)$. 
 Finally, the weighting scheme leads to a Laurent polynomial $z(a,b,c,d,e,f)$ from $\mathcal{G}(a,b,c,d,e,f)$.  The main theorem of this section is that $z(a,b,c,d,e,f) = z_{i}^{j,k}$, the cluster variable in cluster $Z^S$ reached via the toric mutation sequence $S$.

We will adopt the weighting scheme on the brane tiling utilized in \cite{zhang}, \cite{speyer}, \cite{GK}, and \cite{LMNT}.  Associate the \textbf{weight} $\frac{1}{x_i x_j}$ to each edge bordering faces labeled $i$ and $j$ in the brane tiling. Let $\mathcal{M}(G)$ denote the set of perfect matchings of a subgraph $G$ of the brane tiling. We define the weight $w(M)$ of a perfect matching $M$ in the usual manner as the product of the weights of the edges included in the matching under the weighting scheme. Then we define the weight of $G$ as
\[
w(G) = \sum_{M \in \mathcal{M}(G)}w(M).
\]

We also define the \textbf{covering monomial}, $m(G)$, of any graph $G=\mathcal{G}(a,b,c,d,e,f)$, resulting from a contour without self-intersection, as follows.

\begin{definition} \label{def:covmon} First we define the covering monomial  $m(\widetilde{G})$ of  the graph $\widetilde{G}=\widetilde{\mathcal{G}}(a,b,c,d,e,f)$ as the product $x_1^{a_1}x_2^{a_2}x_3^{a_3}x_4^{a_4}x_5^{a_5}x_6^{a_6}$, where $a_j$ is the number of faces labeled $j$ restricted inside the contour $\mathcal{C}(a,b,c,d,e,f)$. Consider the edges in $\widetilde{G}$ that are adjacent to a valence one black vertex (this edge is not in $G$). Assume that $b_i$ is the number of faces labeled $i$ adjacent to such a forced edge. We now define the covering monomial, $m(G)$, of $G$ as the product $x_1^{a_1-b_1}x_2^{a_2-b_2}x_3^{a_3-b_3}x_4^{a_4-b_4}x_5^{a_5-b_5}x_6^{a_6-b_6}$.
 Figure \ref{fig: cov mon} illustrates an example of the quadrilaterals included in the covering monomial of a small subgraph, outlined in red.
\end{definition}

\begin{figure}[h]
    \centering
        \scalebox{0.75}{\includegraphics[keepaspectratio=true, width=50 mm]{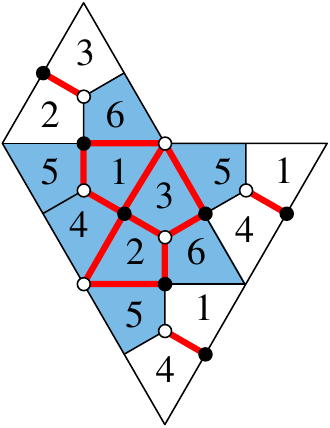}}
	\caption{\small The covering monomial of $\mathcal{G}(2,-2,1,1,-1,0) \subset \widetilde{\mathcal{G}}(2,-2,1,1,-1,0)$ outlined in red includes the blue quadrilaterals and is given by $x_{1}x_{2}x_{3}x_{4}x_{5}^{3}x_{6}^{2}$.}  	\label{fig: cov mon}
\end{figure}

Finally, to make notation more concise in later proofs, it will be useful to define the product of the covering monomial and weight of a subgraph $G$ as
\begin{equation*}
c(G) = w(G)m(G).
\end{equation*}
Similarly, we define $c(\widetilde{G})=w(\widetilde{G})m(\widetilde{G})$.

\begin{remark} \label{rem:cov}
We have $$c(\mathcal{G}(a,b,c,d,e,f)) = c(\widetilde{\mathcal{G}}(a,b,c,d,e,f))$$
for any contours without self-intersections.  Indeed, besides containing a larger set of faces, the difference between perfect matchings of $\mathcal{G}(a,b,c,d,e,f)$ and  $\widetilde{\mathcal{G}}(a,b,c,d,e,f)$ are the edges forced to be in a perfect matching by being adjacent to a valence one black vertex.  On the other hand, the contribution to the covering monomial of $\widetilde{\mathcal{G}}(a,b,c,d,e,f)$ from the faces incident to these edges balances out the weight of these edges. Then the equality follows.
\end{remark}

\begin{definition} [\bf Contour $6$-tuple] \label{def:6tuple}
Given a toric mutation sequence $S$, we may obtain a corresponding six-tuple of points
$$[(i_1,j_1,k_1),(i_2,j_2,k_2),(i_3,j_3,k_3),(i_4,j_4,k_4),(i_5,j_5,k_5),(i_6,j_6,k_6)]$$
in $\mathbb{Z}^3$ starting from the initial prism
{\footnotesize $[(0,-1,1), (0,-1,0), (-1,0,0), (-1,0,1), (0,0,1), (0,0,0)]$} and using the lattice moves illustrating in Figures \ref{fig:ModelII},  \ref{fig:ModelIII}, and \ref{fig:ModelsGeom}\footnote{In particular, when $S$ is specifically a generalized $\tau$-mutation sequence, this six-tuple of points is the prism $\Delta^S$ defined in Definition \ref{def:prism}.}.
We use these six $\mathbb{Z}^3$-points to define a corresponding $6$-tuple of Contours, which we denote as $\mathcal{C}^S$.
$$\mathcal{C}^{S} =
[\mathcal{C}_{i_1}^{j_1,k_1}, \mathcal{C}_{i_2}^{j_2,k_2}, \mathcal{C}_{i_3}^{j_3,k_3},
\mathcal{C}_{i_4}^{j_4,k_4}, \mathcal{C}_{i_5}^{j_5,k_5}, \mathcal{C}_{i_6}^{j_6,k_6}]$$
using  $\mathcal{C}_i^{j,k}= \mathcal{C}(j+k, -i-j-k, i+k, j+1-k, -i-j-1+k, i+1-k)$ from Definition  \ref{Def:LMNT}.
\end{definition}

In particular, notice that by Definition \ref{def:6tuple}, $\mathcal{C}^\emptyset = [\mathcal{C}_0^{-1,1}, \mathcal{C}_0^{-1,0}, \mathcal{C}_{-1}^{0,0}, \mathcal{C}_{-1}^{0,1}, \mathcal{C}_0^{0,1}, \mathcal{C}_0^{0,0}]$, which we abbreviate as $[C_1, C_2, C_3, C_4, C_5, C_6],$ and refer to as the initial contours.
\begin{eqnarray}
\label{contours1} C_1 = \mathcal{C}(0,0,1,-1,1,0), ~~C_2 = \mathcal{C}(-1,1,0,0,0,1), \\
\label{contours2} C_3 = \mathcal{C}(0,1,-1,1,0,0), ~~C_4 = \mathcal{C}(1,0,0,0,1,-1), \\
\label{contours3} C_5 = \mathcal{C}(1,-1,1,0,0,0), ~~C_6 = \mathcal{C}(0,0,0,1,-1,1).
\end{eqnarray}

\begin{theorem} \label{thm:main}
Let $S$ be a toric mutation sequence.  Build the $6$-tuple of contours  $\mathcal{C}^S$'s as in Definition \ref{def:6tuple}.  Then use these six contours to build six subgraphs of $\mathcal{T}$ as in Section \ref{Sec:Contours}, i.e.
$\mathcal{G}^S_i = \mathcal{G}(a,b,c,d,e,f)$ if $\mathcal{C}^S_i = \mathcal{C}(a,b,c,d,e,f)$ for $1 \leq i \leq 6$.
 If none of the contours have self-intersections, then the cluster obtained from the toric mutation sequence $S$ is
 $$Z^S = [ c(\mathcal{G}^S_1), c(\mathcal{G}^S_2), c(\mathcal{G}^S_3), c(\mathcal{G}^S_4), c(\mathcal{G}^S_5)
c(\mathcal{G}^S_6)].$$
\end{theorem}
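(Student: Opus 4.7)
The plan is to prove Theorem \ref{thm:main} by induction on the length of the toric mutation sequence $S$, with the base case $S=\emptyset$ handled by direct inspection and the inductive step carried out via Kuo's Graphical Condensation theorem \cite{kuo1} applied to suitably superposed subgraphs of $\mathcal{T}$. First I would verify the base case: each initial contour $C_i$ in equations (\ref{contours1})--(\ref{contours3}) traces out a small region whose unique perfect matching, together with its covering monomial, evaluates to $x_i$. Thus the initial $6$-tuple $[c(\mathcal{G}^{\emptyset}_1), \ldots, c(\mathcal{G}^{\emptyset}_6)]$ matches the initial cluster $[x_1, \ldots, x_6]$.

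For the inductive step, I would assume the statement holds for $S$, so that $Z^S = [c(\mathcal{G}^S_1), \ldots, c(\mathcal{G}^S_6)]$, and then apply any single toric mutation $\mu_r$ extending $S$ to $S' = S \mu_r$. By Lemma \ref{lem:gentoric} and the analysis of Section \ref{sec:gentoric}, this mutation corresponds to one of the lattice moves of Figures \ref{fig:ModelII}, \ref{fig:ModelIII}, or \ref{fig:ModelsGeom}, which determines a unique new contour $(\mathcal{C}^S_r)'$ and hence a unique new subgraph $(\mathcal{G}^S_r)'$. The exchange relation produced by $\mu_r$ has the form $(z_r^S)' \cdot z_r^S = M_1 + M_2$, where $M_1$ and $M_2$ are monomials in the other five cluster variables of $Z^S$. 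By the inductive hypothesis, verifying this at the level of clusters reduces to proving the graphical identity $c((\mathcal{G}^S_r)') \cdot c(\mathcal{G}^S_r) = c(H_1) \cdot c(H'_1) + c(H_2) \cdot c(H'_2)$, where each $H$-subgraph is one of $\mathcal{G}^S_1, \ldots, \mathcal{G}^S_6$.

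The heart of the proof is then to realize each such graphical identity as an instance of Kuo's theorem: given a planar bipartite graph with four distinguished boundary vertices of alternating color, the product of weighted matching sums on one pair of two-vertex deletions equals a sum of products on the other pairings. I would apply this theorem to the overlay of $(\mathcal{G}^S_r)'$ with $\mathcal{G}^S_r$, using the lattice geometry of Section \ref{sec:gentoric} to pinpoint the four distinguished vertices at the corners where the two contours disagree. The main task is then to verify, case-by-case according to the shape decompositions of Section \ref{sec:shapes}, that the four vertex-deletion subgraphs arising from Kuo's theorem coincide (as subgraphs of $\mathcal{T}$) with the $H_1, H'_1, H_2, H'_2$ dictated by the exchange relation, and that the covering monomials balance. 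The identity $c(\mathcal{G}) = c(\widetilde{\mathcal{G}})$ from Remark \ref{rem:cov} is instrumental here, since working with $\widetilde{\mathcal{G}}$ makes the counting of faces enclosed by the contour more uniform under overlay.

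The main obstacle is twofold. First, there are several distinct types of toric mutations encoded in the recurrences (\ref{eq:Rec1})--(\ref{eq:kRec5}), and each requires choosing a correct four-vertex configuration on an appropriate overlay; in particular, the mutations passing between Model 3 and Model 4 quivers via Figure \ref{fig:ModelsGeom} differ qualitatively from the standard prism moves and likely demand a separate variant of Kuo's theorem. Second, the sign-pattern catalogue of Section \ref{sec:shapes} spans multiple qualitatively different geometries (unbounded sign-unbalanced, unbounded sign-balanced, bounded sign-unbalanced, and degenerate cases), each of which must be treated to confirm that the overlays behave as expected and produce the right boundary data for Kuo's theorem. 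I would aim to reduce these to a small set of representative configurations by exploiting the twisted rotation $\theta$ of Section \ref{sec:shapes} together with the rotational symmetries of $\mathcal{T}$, thereby containing the combinatorial explosion and mirroring in graphical form the algebraic symmetry that underlies the identities in Lemma \ref{lem:ABCDE} used to prove Theorem \ref{thm:explicit}.
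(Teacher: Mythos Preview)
Your proposal is correct and follows essentially the same strategy as the paper: induction on the mutation sequence with the base case handled by direct inspection of the initial contours, and the inductive step carried out by Kuo condensation applied to the graph cut out by the \emph{outer} contour obtained from superimposing (a shift of) the new contour on the old one, with Lemma~\ref{claim:signtuple} identifying the four distinguished boundary points and Remark~\ref{rem:cov} handling the covering-monomial bookkeeping. One refinement to flag: the paper needs all four variants of Kuo condensation (Lemmas~\ref{Kuo1}--\ref{Kuo4}), and the choice among them is governed by the color pattern of the four points on the outer contour rather than by which model transition is occurring, so your restriction to ``alternating color'' must be relaxed throughout and not only for the Model~3/Model~4 moves.
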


We prove this Theorem in Section \ref{Sec:proof}.  Note that it is sufficient to prove the result when $S$ is a generalized $\tau$-mutation sequence, due to
Lemma \ref{lem:gentoric}.

By specializing $x_1=x_2=\dots=x_6=1$, Theorems  \ref{thm:explicit} and \ref{thm:main}  imply that the (unweighted) numbers of perfect matchings of our subgraphs are always certain products of a power of $2$ and a power of $3$. This also implies the work of the first author about tilings of the Dragon Regions in \cite[Theorem 2]{LaiNewDungeon}. In other words, our main results here provide a $6$-parameter refinement of Theorem 2 in \cite{LaiNewDungeon}.

\begin{example} \label{ex:1}
As an illustration of Theorem \ref{thm:main}, we consider the generalized $\tau$-mutation sequence
$S= \tau_1\tau_2\tau_3\tau_1\tau_2\tau_3 \tau_2 \tau_1 \tau_4$.  Letting $S_1 = \tau_1\tau_2\tau_3\tau_1\tau_2\tau_3\tau_2\tau_1$, and applying the corresponding alcove walk, we reach the triangle
$\{(1,3),(1,2),(0,3)\}$ in $L^\Delta$, written in order using $I-J \mod 3$.  We then apply $\tau_4$ to get
$$\mathcal{C}^S = [ \mathcal{C}_1^{3,-1}, \mathcal{C}_1^{3,0}, \mathcal{C}_1^{2,0}, \mathcal{C}_1^{2,-1},
\mathcal{C}_0^{3,-1}, \mathcal{C}_0^{3,0}], \mathrm{~which~equals~}$$
$$[\mathcal{C}(2, -3, 0, 5, -6, 3),  \mathcal{C}(3, -4, 1, 4, -5, 2), \mathcal{C}(2, -3, 1, 3, -4, 2),$$
$$\hspace{3em}\mathcal{C}(1, -2, 0, 4, -5, 3), \mathcal{C}(2, -2, -1, 5, -5, 2), \mathcal{C}(3, -3, 0, 4, -4, 1)].$$
These six contours respectively correspond to the six subgraphs appearing in Figure \ref{fig:ex1}.  The associated cluster variables are
$$\frac{(x_1x_3 + x_2x_4)^{4}(x_2x_5 + x_1x_6)^{6}(x_3x_5 + x_4x_6)^{7}(x_1x_3x_6 + x_2x_3x_5 + x_2x_4x_6)}{x_1^{7}x_2^{7}x_3^{6}x_4^{7}x_5^{5}x_6^{4}},$$
$$\frac{(x_1x_3 + x_2x_4)^{4}(x_2x_5 + x_1x_6)^{6}(x_3x_5 + x_4x_6)^{7}}{x_1^{7}x_2^{6}x_3^{6}x_4^{6}x_5^{4}x_6^{4}},$$
$$\frac{(x_1x_3 + x_2x_4)^{2}(x_2x_5 + x_1x_6)^{4}(x_3x_5 + x_4x_6)^{4}}{x_1^{4}x_2^{4}x_3^{3}x_4^{4}x_5^{2}x_6^{2}},$$
$$\frac{(x_1x_3 + x_2x_4)^{2}(x_2x_5 + x_1x_6)^{4}(x_3x_5 + x_4x_6)^{4}(x_1x_3x_6 + x_2x_3x_5 + x_2x_4x_6)}{x_1^{5}x_2^{4}x_3^{4}x_4^{4}x_5^{3}x_6^{2}},$$
$$\frac{(x_1x_3 + x_2x_4)^{3}(x_2x_5 + x_1x_6)^{4}(x_3x_5 + x_4x_6)^{5}(x_1x_3x_6 + x_2x_3x_5 + x_2x_4x_6)}{x_1^{6}x_2^{5}x_3^{4}x_4^{5}x_5^{3}x_6^{3}}, \mathrm{~and~}$$
$$\frac{(x_1x_3 + x_2x_4)^{3}(x_2x_5 + x_1x_6)^{4}(x_3x_5 + x_4x_6)^{5}}{x_1^{5}x_2^{5}x_3^{4}x_4^{4}x_5^{3}x_6^{2}}$$
containing 393216, 131072, 1024, 3072, 12288, and 4096 terms, respectively.
\end{example}

\begin{figure}
\includegraphics[width=5in]{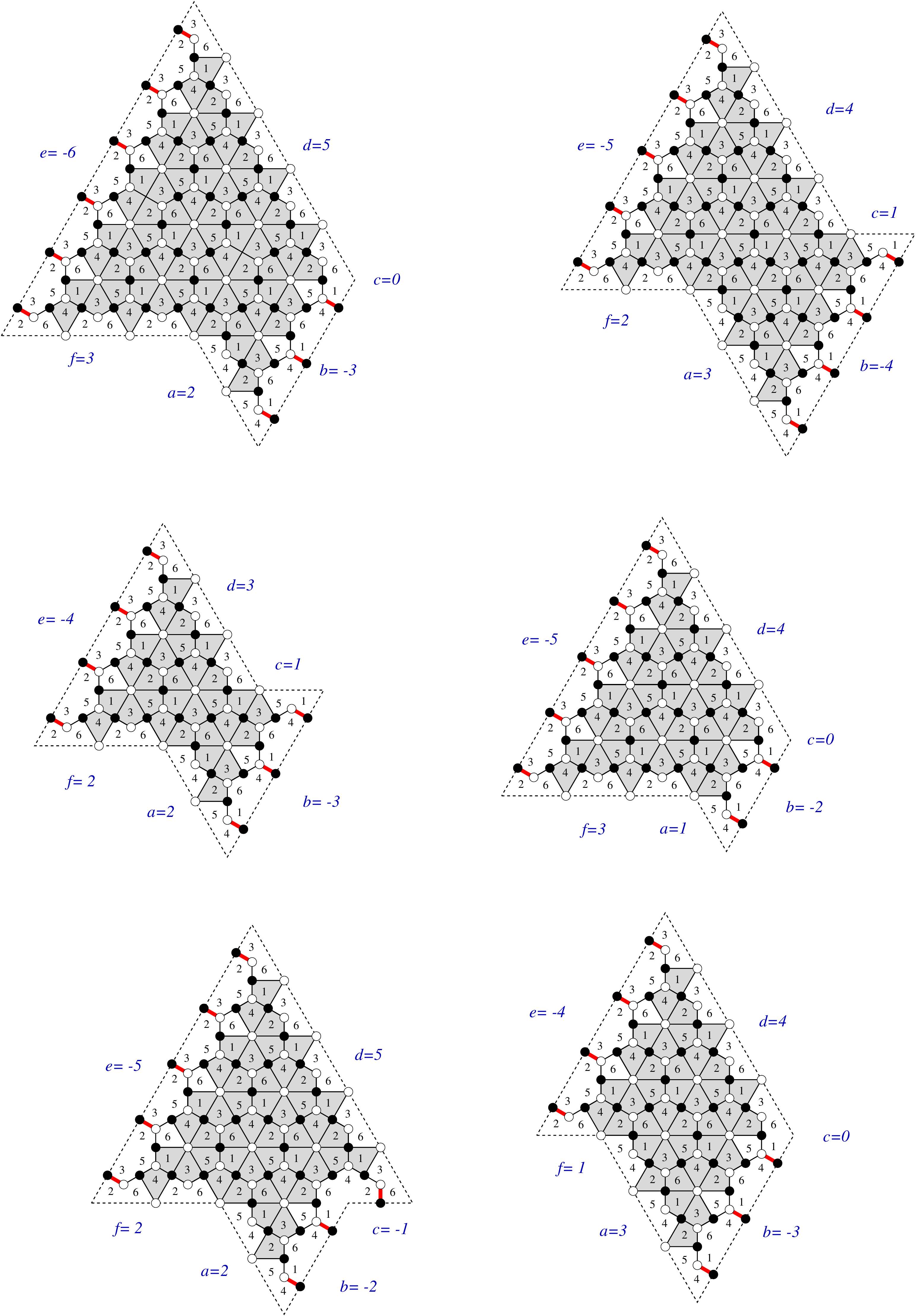}
\caption{The subgraphs obtained from the generalized $\tau$-mutation sequence of Example \ref{ex:1}.}
\label{fig:ex1}
\end{figure}

\begin{example} \label{ex:2}
As another example, we consider the generalized $\tau$-mutation sequence given by $S=\tau_1\tau_2\tau_3\tau_1 \tau_3\tau_2\tau_1 \tau_4\tau_5$.
We have $S_1 = \tau_1\tau_2\tau_3\tau_1 \tau_3\tau_2\tau_1$. Following the corresponding alcove walk, we reach the triangle $\{(2,1),(1,3),(1,1)\}$ in $L^\Delta$. Applying $\tau_4$ yields \[[ \mathcal{C}_2^{1,-1}, \mathcal{C}_2^{1,0}, \mathcal{C}_1^{2,0}, \mathcal{C}_1^{2,-1},
\mathcal{C}_1^{1,-1}, \mathcal{C}_1^{1,0}],\] and $\tau_5$ afterwards implies
$$[\mathcal{C}(0, -2, 1, 3, -5, 4),  \mathcal{C}(-1, -1 , 0, 4, -6, 5), \mathcal{C}(0, -1, -1, 5, -6, 4),$$
$$\hspace{3em}\mathcal{C}(1, -2, 0, 4, -5, 3), \mathcal{C}(0, -1, 0, 3, -4, 3), \mathcal{C}(-1, 0, -1, 4, -5, 4)].$$
These six  contours respectively correspond to the six subgraphs of Figure \ref{fig:ex2} with 3072, 27648, 27648, 3072, 96, and 864 perfect matchings, respectively.  Again this matches the number of terms in the corresponding cluster variables which we have omitted due to their size.
\end{example}

\begin{figure}
\includegraphics[width=5in]{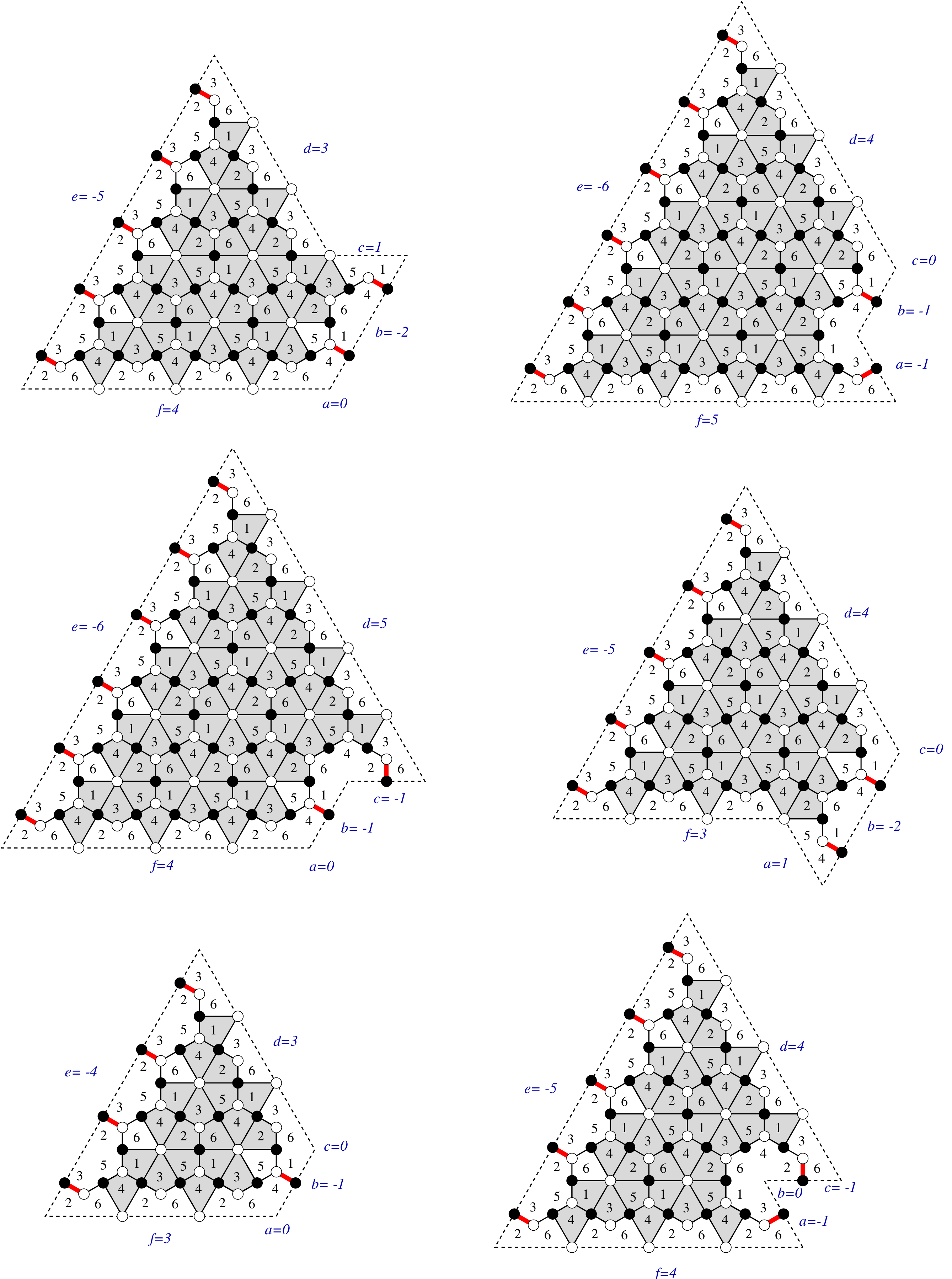}
\caption{The subgraphs obtained from the generalized $\tau$-mutation sequence of Example \ref{ex:2}.}
\label{fig:ex2}
\end{figure}

\section{Preparations for the Proof of Theorem \ref{thm:main}} \label{sec:directions}

In the proof of Theorem \ref{thm:explicit} in Section 3, we used three types of recurrences, e.g. (\ref{eq:Rec1}), (\ref{eq:kRec2}), and (\ref{eq:kRec3}). These recurrences correspond to the lattice moves in Figure \ref{fig:ModelIII}. We call them (R4), (R1) and (R2), respectively,  using the notations of \cite{LaiNewDungeon}.

The $(R4)$ recurrences correspond to replacing the cluster variable $z_i^{j,k}$ with one of twelve possibilities: $z_{i-1}^{j+2, k \pm 1}$, $z_{i+1}^{j-2,k \pm 1}$, $z_{i+2}^{j-1, k\pm 1}$, $z_{i-2}^{j+1, k\pm 1}$,
$z_{i-1}^{j-1, k\pm 1}$, or $z_{i+1}^{j+1, k\pm 1}$.  In terms of the $6$-tuples, this corresponds to replacing $(a,b,c,d,e,f)$ with
$$(a+1, b, c-2, d+3, e-2, f)$$ or a cyclic rotation or negation of this transformation.

The $(R1)$ recurrences  correspond to replacing the cluster variable $z_i^{j,k}$ with
$z_{i+1}^{j,k\pm 2}$, $z_{i-1}^{j,k\pm 2}$, $z_{i}^{ j+1, k\pm 2}$, $z_{i}^{ j-1, k \pm 2}$, $z_{i+1}^{j-1, k \pm 2}$ or $z_{i-1}^{j+1, k\pm 2}$.  In terms of $6$-tuples, the transformation is a cyclic rotation or negation of $$(a-2, b+3, c-3, d+2, e-1, f+1).$$

Finally, the $(R2)$ recurrences correspond to replacing the cluster variable $z_i^{j,k}$ with
$z_{i + 2}^{j, k}$, $z_{i - 2}^{j, k}$, $z_{i}^{j+2, k}$, $z_{i}^{j-2, k}$, $z_{i + 2}^{j-2, k}$, or $z_{i - 2}^{j+2, k}$.  This transformation is a cyclic rotation or negation of $$(a-2,b+2,c,d-2,e+2,f).$$  Notice that in this case, there are only three distinct cyclic rotations.

Comparing with the geometry in Section \ref{sec:gentoric}, any toric mutation corresponds to one of these transformations.  To recover the associated binomial exchange relations, we build a (possibly degenerate) octahedron in $\mathbb{Z}_3$ using the lattice points $(i,j,k)$ and $(i',j',k')$ as its antipodes. More precisely, see Figure \ref{fig:degoct}.

\vspace{2em}

\begin{figure}
\includegraphics[width=12cm]{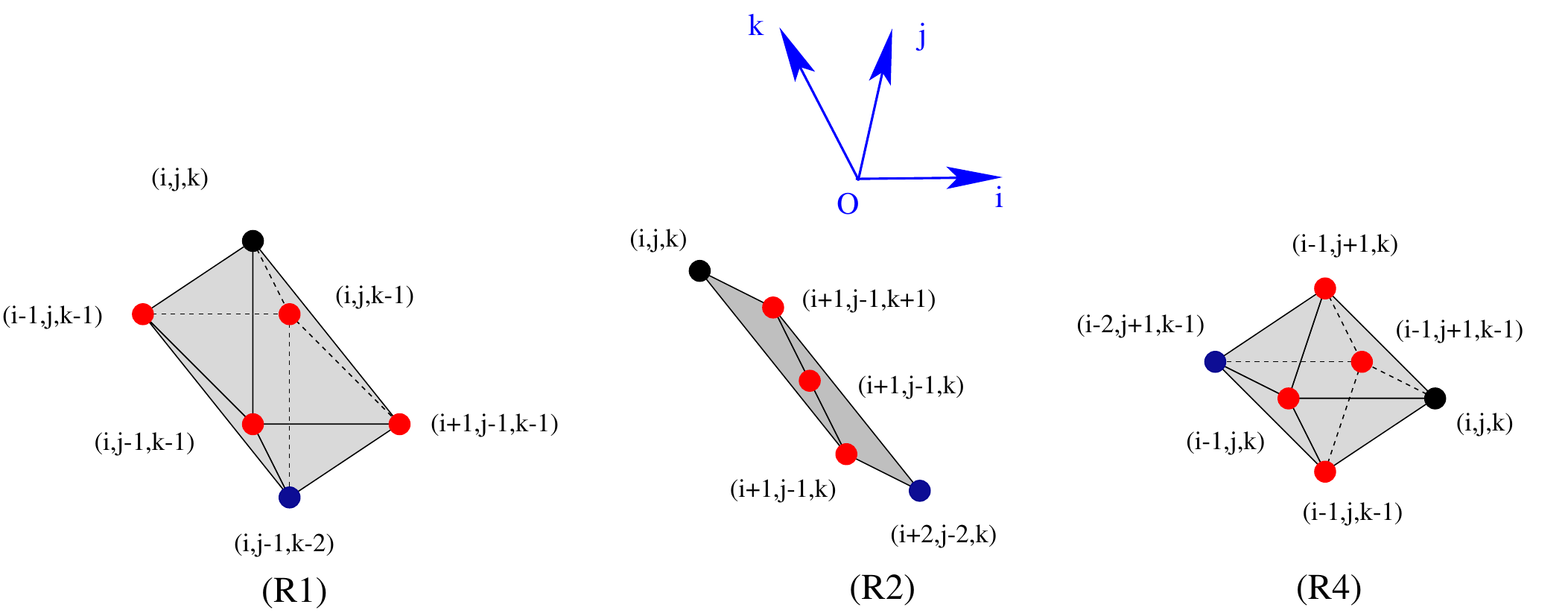}
\caption{Examples of (possibly degenerate) octahedron in $\mathbb{Z}^3$ induced by the lattice points $(i,j,k)$ and $(i',j',k')$.}
\label{fig:degoct}
\end{figure}

These algebraic recurrences agree with the three-term recurrences that appear in Eric Kuo's theory of graphical condensation \cite{kuo1,kuo2}.  This point of view is used in Section \ref{Sec:proof} to present the proof of Theorem \ref{thm:main}, which links the algebraic expressions of cluster variables as Laurent polynomials to a combinatorial interpretation as partition functions of perfect matchings of certain graphs.  Towards this end, we introduce a way to identify six special points in the graph $\mathcal{G}(a,b,c,d,e,f)$ and how to use these to build related subgraphs.

For the sides $a,b,c,d,e,f$, we define respectively the points $A,B,C,D,E,F$ as follows. The region restricted by the contour $\mathcal{C}(a,b,c,d,e,f)$ can be partitioned into equilateral triangles consisting of the faces $1, 4, 5$ or of the faces $2,3,6$. There are $|a|$ such triangles with bases resting on the side $a$ (see the the shaded triangles in Figures \ref{fig:part1}, \ref{fig:part2}, and \ref{fig:part3}).  If $a$ is positive, the point $A$ can be any of $|a|$ big black vertices located at the center of the near side of a shaded triangle as we follow the direction of the side $a$ (see the first picture of Figure \ref{fig:part1}). If $a$ is negative, we can pick $A$ as any of the $|a|$ big white vertices (that are the tops of the shaded triangles with bases resting on the side $a$ as in first picture in Figure \ref{fig:part3}). Similarly, we define the points $B,C,D,E,F$,  based on Figures \ref{fig:part1}, \ref{fig:part2}, and \ref{fig:part3}.

\begin{figure}
\includegraphics[width=4in]{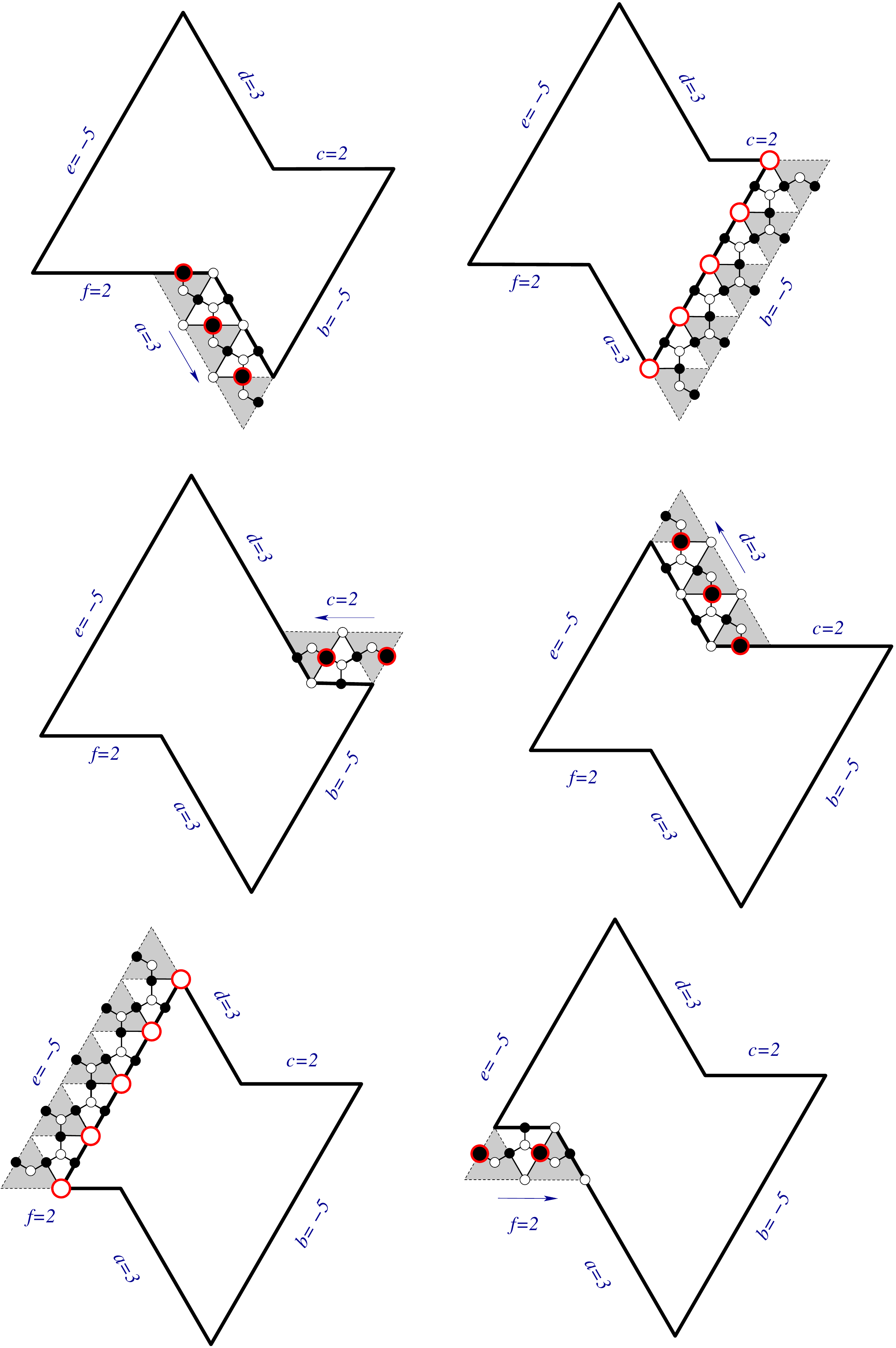}
\caption{How we pick the points $A,B,C,D,E,F$ in the case when $(a,b,c,d,e,f) = (+,-,+, +, -, +)$.}
\label{fig:part1}
\end{figure}

\begin{figure}
\includegraphics[width=4in]{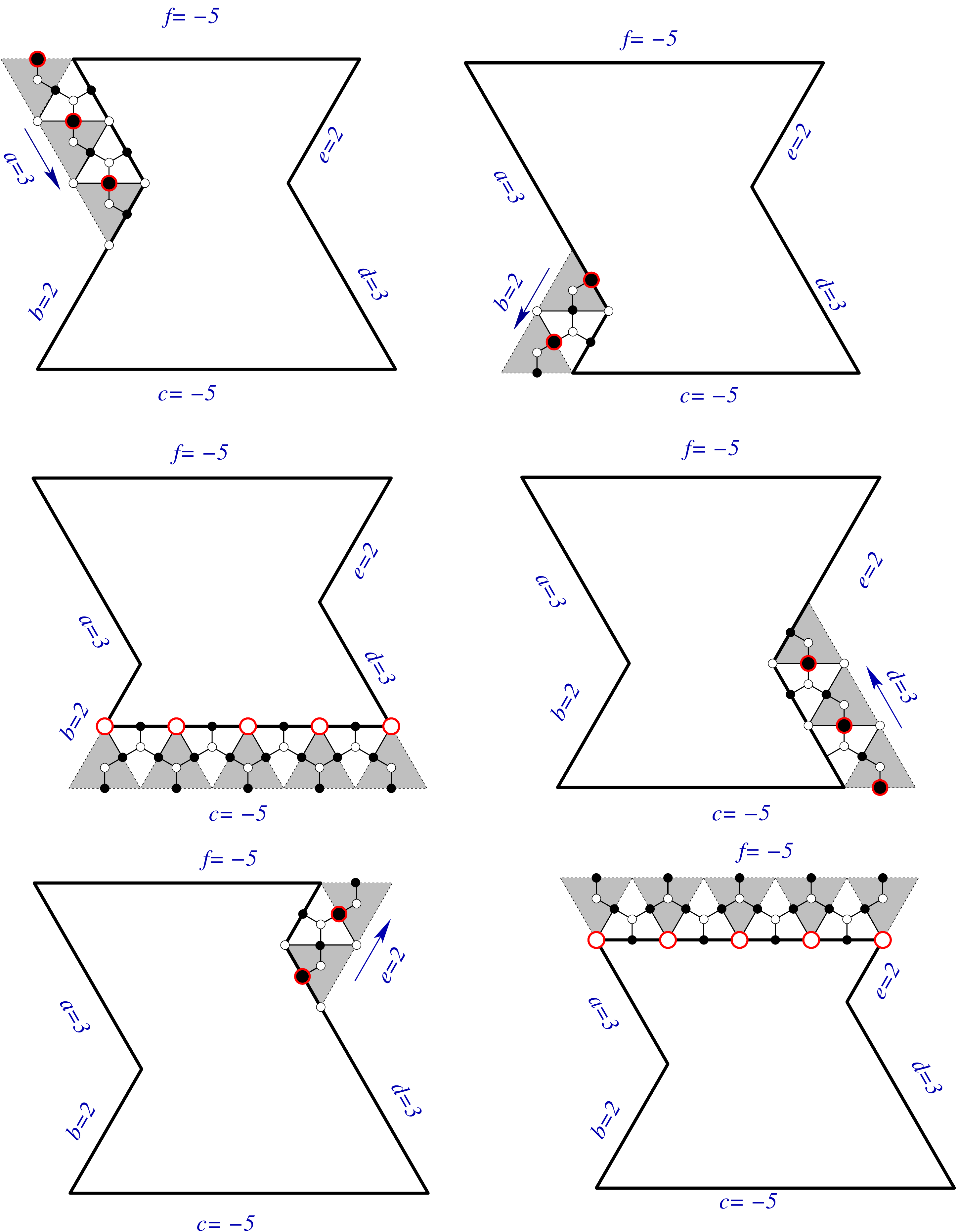}
\caption{How we pick the points $A,B,C,D,E,F$ in the case when $(a,b,c,d,e,f) = (+,+,-, +, +, -)$.}
\label{fig:part2}
\end{figure}

\begin{figure}
\includegraphics[width=5in]{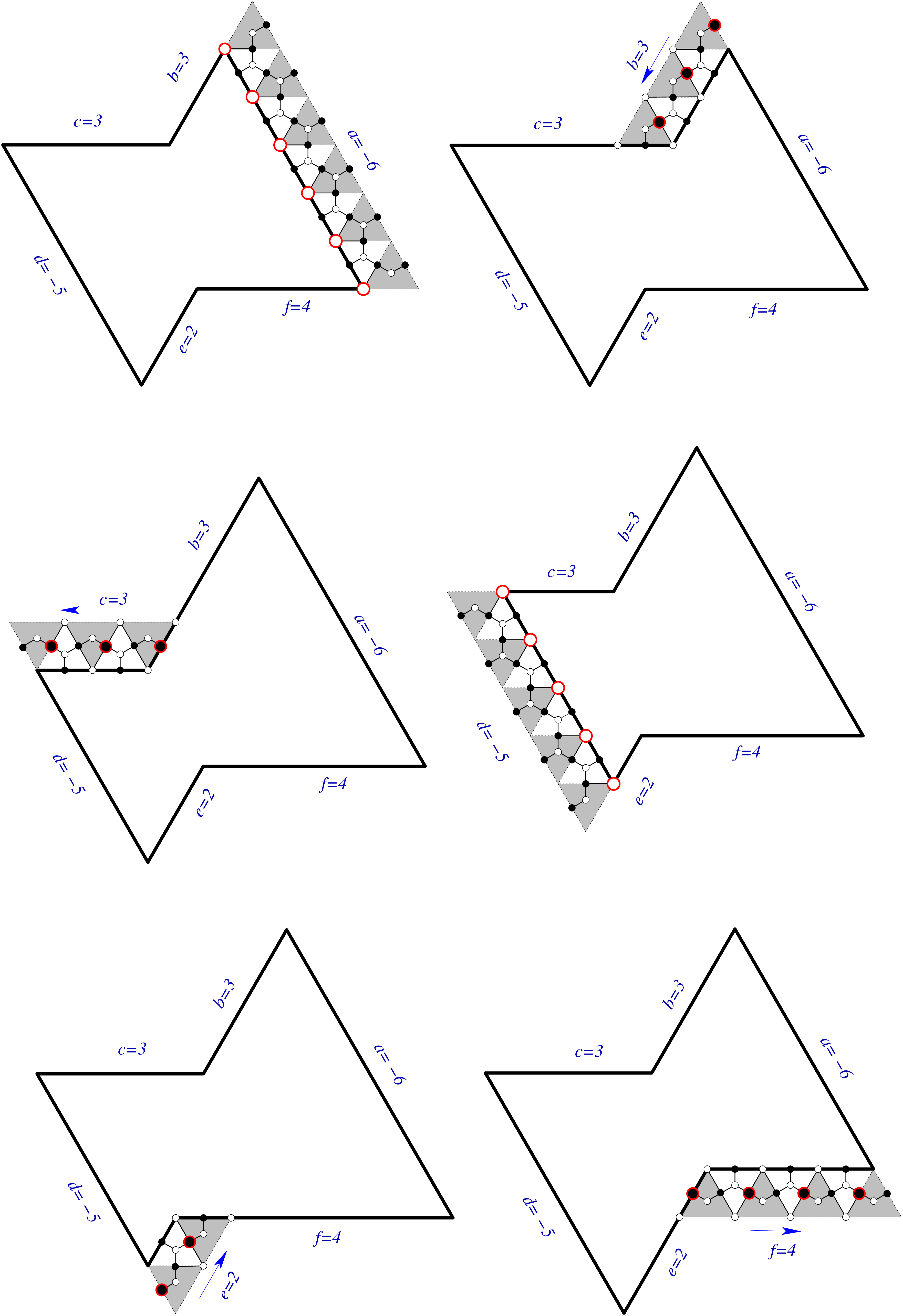}
\caption{How we pick the points $A,B,C,D,E,F$ in the case when $(a,b,c,d,e,f) = (-,+,+, -, +, +)$.}
\label{fig:part3}
\end{figure}

\begin{lemma} \label{claim:signtuple}
Given a subgraph $G$ of the $dP_3$ lattice corresponding to the contour\\ $\mathcal{C}(a,b,c,d,e,f)$ which contains the point $A$, the subgraph $G-\{A\}$ corresponds to the contour $\mathcal{C}(a-1, b+1, c, d, e, f+1)$ (resp.   $\mathcal{C}(a+1, b-1, c, d, e, f-1)$) if the point $A$  is black (resp. white).  Analogous results hold for points $B$, $C$, $D$, $E$, and $F$ up to cyclic rotations.
\end{lemma}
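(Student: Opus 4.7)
The plan is to reduce the claim to a local geometric statement near the point $A$ in the brane tiling $\mathcal{T}$ and verify it by direct inspection, then invoke the cyclic symmetry of Definition \ref{def:subgraphs} and Figures \ref{fig:part1}--\ref{fig:part3} to cover the analogous statements for $B, C, D, E, F$. By a further swap of black and white vertices (with orientation reversal), it suffices to treat the case where $A$ is black and $a > 0$. First I would confirm the algebraic compatibility of the transformation $(a,b,c,d,e,f) \mapsto (a-1, b+1, c, d, e, f+1)$: it preserves the contour closure relations $a+b = d+e$ and $b+c = e+f$ used in Lemma \ref{lem:closeup}, and shifts the color-balance sum $a+b+c+d+e+f$ by exactly $+1$, consistent with excising a single black vertex from an otherwise color-balanced subgraph.

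Next I would analyze the geometric content. Letting $\vec{u}_1, \vec{u}_2, \vec{u}_3$ be the unit direction vectors of sides $a, b, c$, the $60^\circ$ angle structure of the hexagonal contour gives $\vec{u}_1 + \vec{u}_3 = \vec{u}_2$. The modification therefore replaces a single unit segment $\vec{u}_1$ of side $a$ with a two-edge detour: one unit in direction $\vec{u}_2$ (bump in the $b$ direction) followed by one unit in direction $-\vec{u}_3$ (the $f$ direction). This detour carves out exactly one equilateral unit alcove of $L^\Delta$ lying just inside side $a$, and by the construction of Figures \ref{fig:part1}--\ref{fig:part3} the unique degree-$6$ interior vertex of this alcove in $\mathcal{T}$ is the point $A$. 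All other vertices of this alcove lie on its boundary and are retained in the subgraph. The key remaining verification is that applying Steps 2 and 3 of Definition \ref{def:subgraphs} to the modified contour produces exactly $\mathcal{G}(a,b,c,d,e,f) - \{A\}$: the decrement $a \to a{-}1$ causes one fewer black vertex to be removed along side $a$, while the $\vec{u}_2$ and $-\vec{u}_3$ detour edges contribute precisely the removals (of either color, depending on the sign of $b+1$ and $f+1$) needed to excise $A$ together with its six incident edges. That the choice among the $|a|$ candidate locations for $A$ does not matter follows from translation symmetry of the local picture along side $a$.

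The main obstacle I foresee is the bookkeeping in degenerate sign patterns, specifically when one of $b$ or $f$ lies at the boundary between sign regimes (for instance $b = -1$ or $b = 0$, so that the new side has length $0$ or $1$), triggering the special-case logic in Step 3 of Definition \ref{def:subgraphs}. I would handle these via the sign-pattern classification of Section \ref{sec:shapes} and Figures \ref{fig:decomposition} and \ref{fig:decomposition2}, verifying each transition by hand and confirming in particular that the detour never forces the new contour to self-intersect in a way not already present in the original. Once the generic case and these degenerate transitions are established, the companion claim for $A$ white (with $\mathcal{C}(a{+}1, b{-}1, c, d, e, f{-}1)$) follows by color-swap, and the results for $B, C, D, E, F$ by the symmetries noted above.
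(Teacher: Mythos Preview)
Your geometric picture of the contour modification is not correct, and the error hides the key mechanism that the paper uses.  You describe the passage from $\mathcal{C}(a,b,c,d,e,f)$ to $\mathcal{C}(a-1,b+1,c,d,e,f+1)$ as ``replacing a single unit segment of side $a$ by a two-edge detour,'' carving out one unit alcove containing $A$.  But that detour would produce an eight-sided notched contour, not the six-sided $\mathcal{C}(a-1,b+1,c,d,e,f+1)$.  If instead you overlay the two genuine six-sided contours so that sides $c,d,e$ coincide (equivalently, fix the $b$--$c$ corner), then the $f$--$a$ corner moves by $\vec u_f$ and the $a$--$b$ corner moves by $-\vec u_b$, so the new side $a$ is the old one shifted one row inward.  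The region removed is a trapezoid of $2a-1$ unit triangles along side $a$, not a single triangle.  (This is precisely the ``trapezoid consisting of $2|a|-1$ equilateral triangles'' that the paper names explicitly in the white case; the black case is identical.)

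The idea you are missing is the forced-edge cascade.  Deleting the single vertex $A$ from $G$ leaves certain neighbours with valence one; their unique incident edge is forced in every perfect matching, and removing it creates further valence-one vertices.  This propagates along side $a$ until the entire trapezoid has been stripped away, and only then does the remaining graph coincide with $\mathcal{G}(a-1,b+1,c,d,e,f+1)$.  The cascade is also what resolves your appeal to ``translation symmetry'': for distinct choices $A_i$ and $A_j$ along side $a$, the graphs $G-\{A_i\}$ and $G-\{A_j\}$ are genuinely different, so symmetry alone cannot explain why both correspond to the same six-sided contour.  It is only after removing forced edges that they agree.  The paper establishes this cascade by direct inspection of the local pictures (Figures \ref{fig:forced}--\ref{fig:forced3}); any correct argument must engage with it.
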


\begin{proof}
 The removal of $A$ from $G$ yields several edges which are forced in any perfect matching of the resulting graph. By removing these forced edges, we obtain a subgraph which coincides with the graph associated to the contour $\mathcal{C}(a-1, b+1, c, d, e, f+1)$ (see the first picture in Figure \ref{fig:forced}).

If $A$ is white, the removal of $A$ also yields several forced edges. By removing these forced edges, we get the graph associated to the contour $\mathcal{C}(a+1, b-1, c, d, e, f-1)$ (see the second picture in Figure \ref{fig:forced}). The removal of the forced edges also yields the removal of the trapezoid consisting of $2|a|-1$ equilateral triangles along the side $a$.

The arguments for $B$, $C$, $D$, $E$, and $F$ can be obtained by an analogous manner, based on Figures \ref{fig:forced} and \ref{fig:forced2}.  Figure \ref{fig:forced3} illustrates that the same procedure still works even in the degenerate case when some of the sides are of length $\pm 1$.
\end{proof}

\begin{figure}\centering
\includegraphics[width=4.5in]{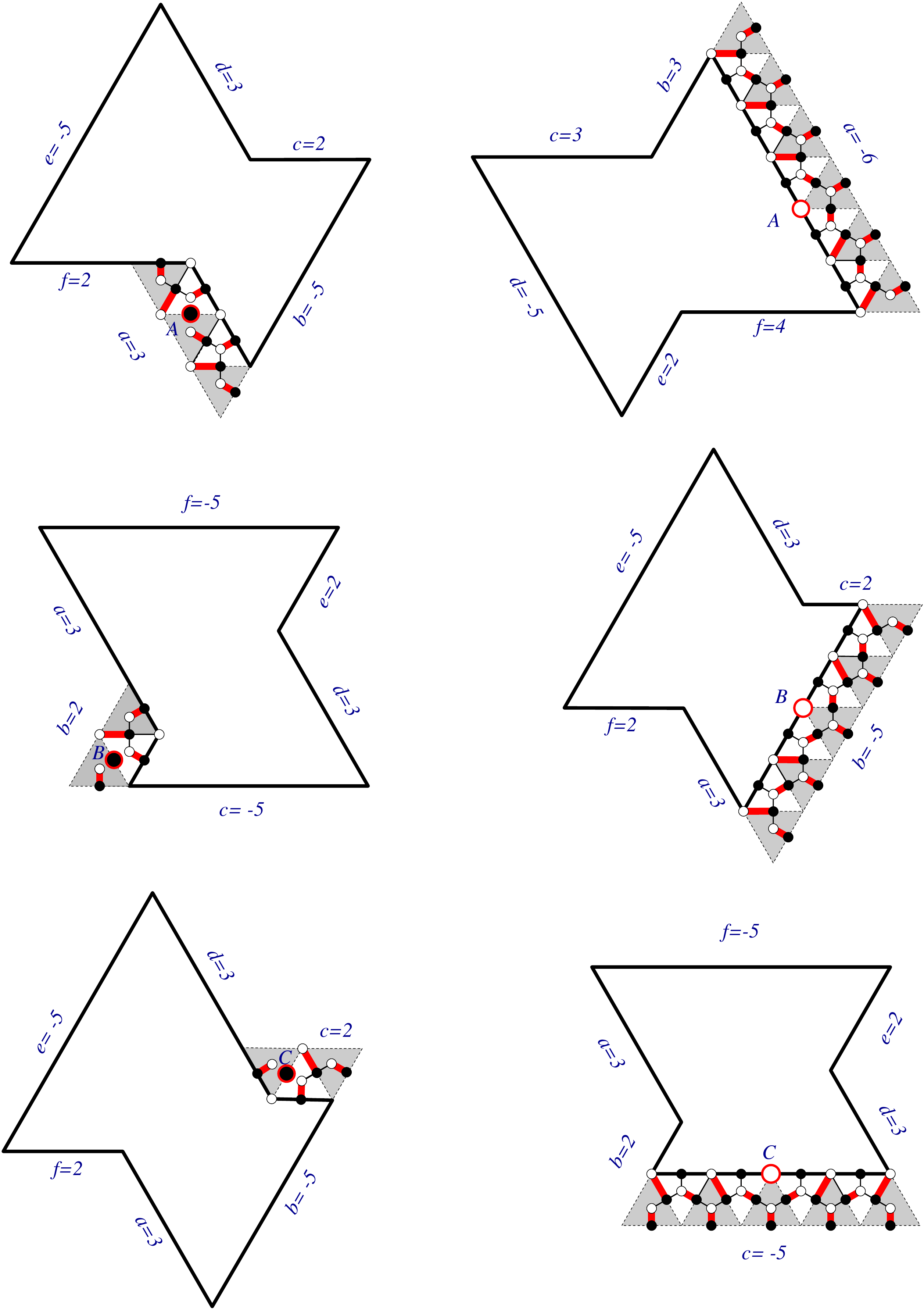}
\caption{Removal of $A$, $B$ and $C$ yields forced edges.}
\label{fig:forced}
\end{figure}

\begin{figure}
\includegraphics[width=4.5in]{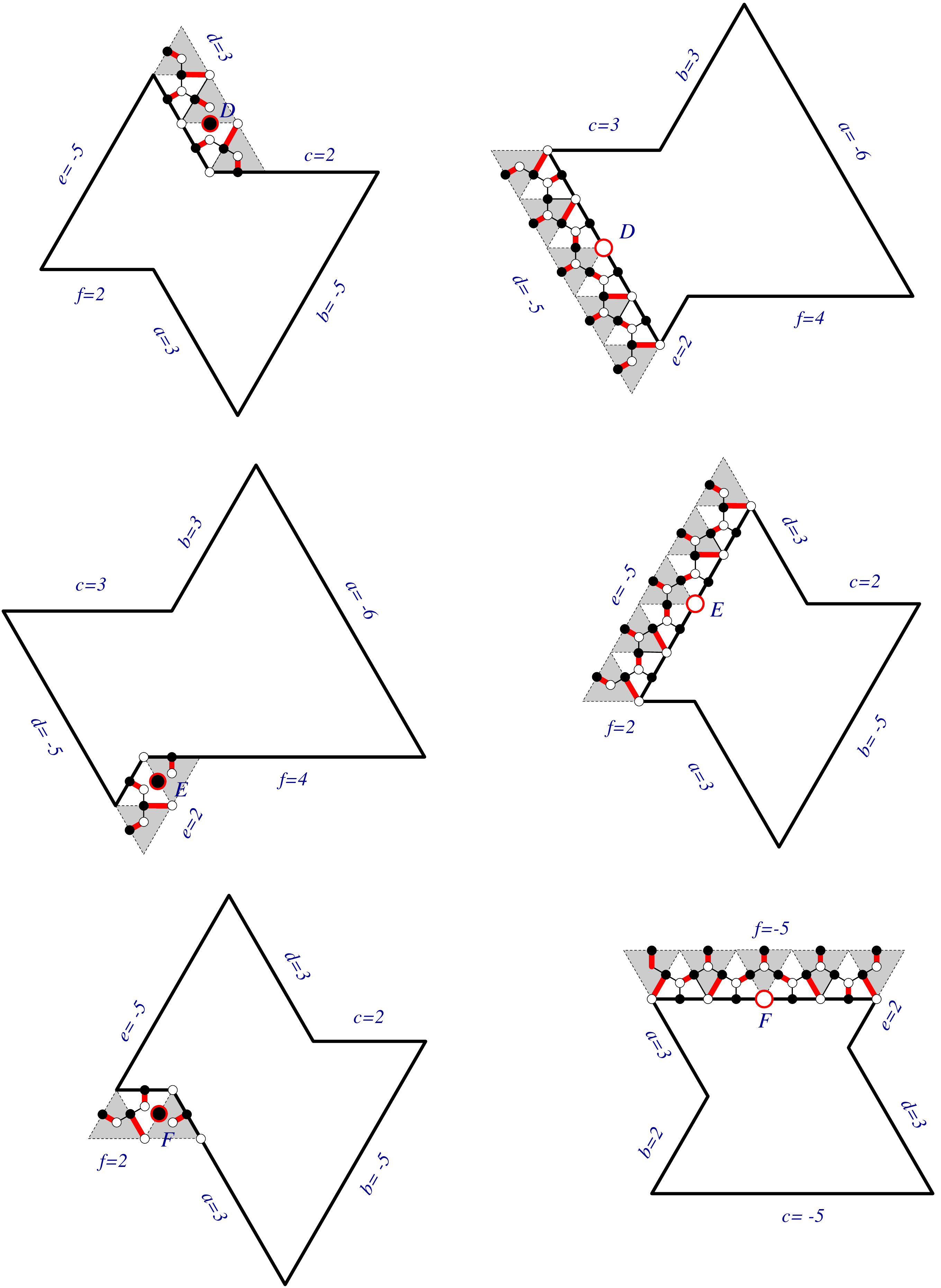}
\caption{Removal of $D$, $E$ and $F$ yields forced edges.}
\label{fig:forced2}
\end{figure}

\begin{figure}
\includegraphics[width=5in]{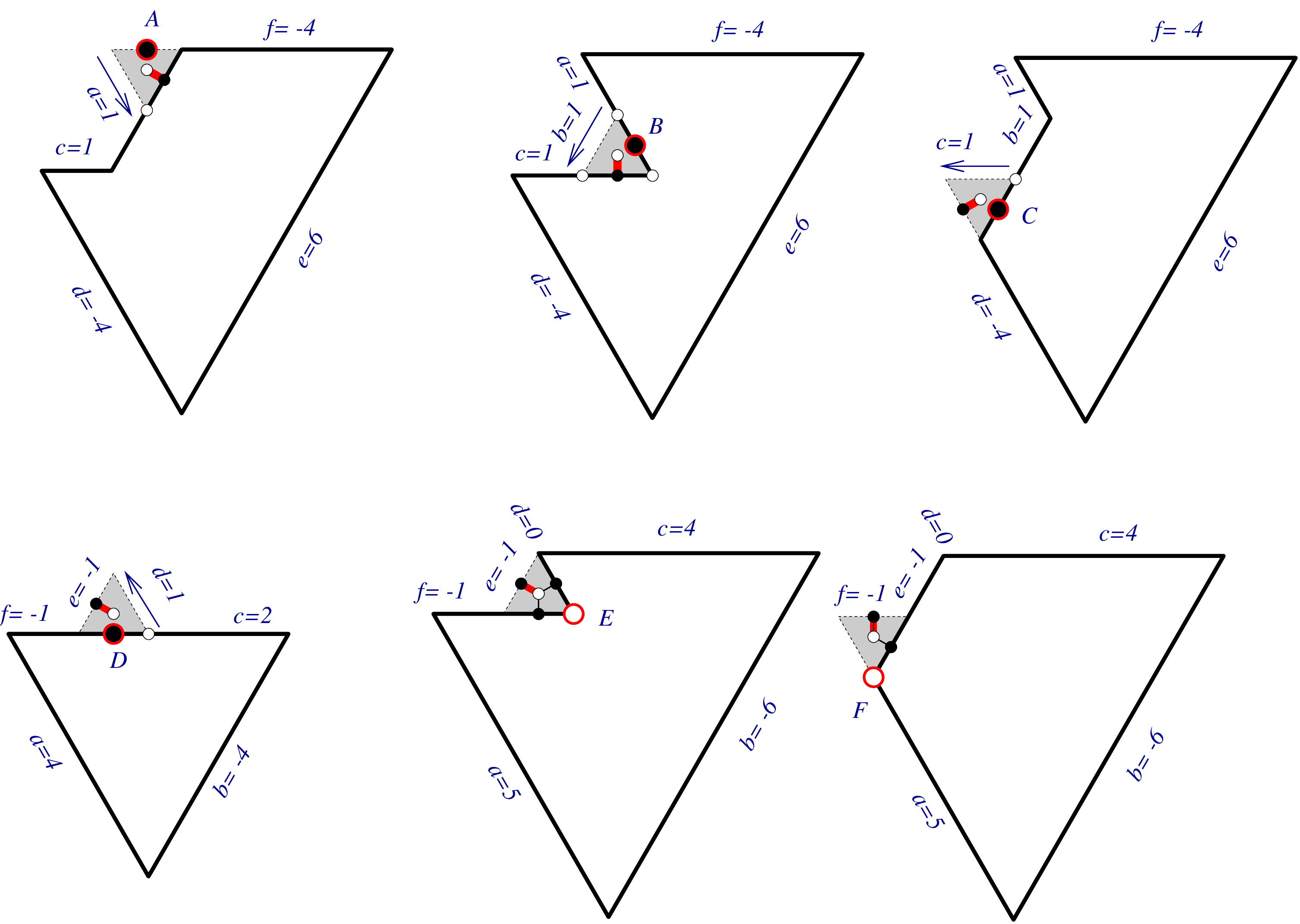}
\caption{Removal of $A,B,C,D,E,F$ when some sides equal $\pm 1$.}
\label{fig:forced3}
\end{figure}

Kuo condensation was first used by Eric H. Kuo \cite{kuo1} to (re)prove the Aztec diamond theorem by Elkies, Kuperberg, Larsen, and Propp \cite{Elkies}. Kuo condensation can be considered as a combinatorial interpretation of Dodgson condensation (or the Jacobi-Desnanot identity; see e.g  \cite{Mui}, pp.136--149) on determinants of matrices. See e.g. \cite{YZ}, \cite{kuo2}, \cite{Ciucu}, \cite{Ful}, \cite{speyer} for different aspects and generalizations of the method, and see e.g. \cite{lai'}, \cite{KW}, \cite{CF}, \cite{Trin} for recent applications of Kuo condensation.

In [Kuo], Kuo presented several different versions of Kuo condensation. For ease of reference, we list below the four versions employed in our proofs.

\begin{lemma}[Balanced Kuo Condensation; Theorem 5.1 in \cite{kuo1}]\label{Kuo1}
Let $G=(V_1,V_2,E)$ be a (weighted) planar bipartite graph with $|V_1|=|V_2|$. Assume that $p_1,p_2,p_3,p_4$ are four vertices appearing in a cyclic order on a face of $G$. Assume in addition that $p_1,p_3\in V_1$ and $p_2,p_4\in V_2$. Then
\begin{align}
w(G)w(G-\{p_1,p_2,p_3,p_4\})=&w(G-\{p_1,p_2\})w(G-\{p_3,p_4\})\notag\\
&+w(G-\{p_1,p_4\})w(G-\{p_2,p_3\}).
\end{align}
\end{lemma}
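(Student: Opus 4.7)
The plan is to prove the identity by the classical superposition-of-matchings argument. After expanding the weight sums, the left-hand side equals $\sum w(M)\,w(M')$ taken over all pairs $(M,M') \in \mathcal{M}(G) \times \mathcal{M}(G-\{p_1,p_2,p_3,p_4\})$, while each summand on the right-hand side is the analogous sum over pairs of matchings of the appropriate two vertex-removed subgraphs. My strategy is to construct a weight-preserving bijection between the pairs counted by the left side and the disjoint union of the pairs counted by the two summands on the right.

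First I would analyze the superposition $M \cup M'$ as a multi-subgraph of $G$, with an edge appearing twice if it lies in both matchings. Every vertex outside $\{p_1,p_2,p_3,p_4\}$ has degree exactly $2$, while each $p_i$ has degree exactly $1$ (contributed only by $M$, since $M'$ has no edges incident to the removed vertices). Hence $M \cup M'$ decomposes uniquely into doubled edges, even-length cycles alternating between $M$- and $M'$-edges, and exactly two alternating paths whose endpoint set is $\{p_1,p_2,p_3,p_4\}$. Because $G$ is bipartite and edges alternate between the two matchings along any such path, the endpoints of each path must lie on opposite sides of $V_1 \sqcup V_2$. Since $p_1,p_3 \in V_1$ and $p_2,p_4 \in V_2$, this forces the endpoint pairing of the two paths to be either
$$\{p_1,p_2\}\sqcup\{p_3,p_4\} \quad\text{or}\quad \{p_1,p_4\}\sqcup\{p_2,p_3\},$$
the third combinatorial option $\{p_1,p_3\}\sqcup\{p_2,p_4\}$ being forbidden by bipartiteness.

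The bijection then proceeds as follows. Given a pair $(M,M')$ whose endpoint pairing is $\{p_1,p_2\}\sqcup\{p_3,p_4\}$, swap the $M$- and $M'$-assignments of the edges along the $p_1$--$p_2$ alternating path, leaving all doubled edges, all alternating cycles, and the $p_3$--$p_4$ path untouched. A parity check at each interior vertex of the path shows that the result is a new pair $(N,N')$ in which $N$ covers every vertex of $G$ except $p_1,p_2$, while $N'$ covers every vertex except $p_3,p_4$; hence $(N,N') \in \mathcal{M}(G-\{p_1,p_2\}) \times \mathcal{M}(G-\{p_3,p_4\})$. The map is involutive since, in $N \cup N'$, the unique alternating path joining $p_1$ and $p_2$ is the same edge set, and swapping again recovers $(M,M')$. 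Because the swap does not alter the multiset of edges used, $w(N)\,w(N') = w(M)\,w(M')$, so weights are preserved. A symmetric construction handles the other pairing and produces the second summand on the right.

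The main obstacle --- really the only point requiring care --- is verifying that the swap yields a valid factorization on the other side: that no alternating cycle is accidentally broken into a path (or vice versa), and that the path being swapped is canonically recoverable from $N \cup N'$. Both follow from tracking the degrees of $p_1,\ldots,p_4$ in each of $M,M',N,N'$ together with the uniqueness of the decomposition of a superposition into doubled edges, alternating cycles, and alternating paths. Once this structural bookkeeping is in place, summing the two-term expansion over all superpositions yields exactly the asserted identity. The planarity and face-cyclic-order assumption is not strictly required for this bipartite form (bipartiteness alone rules out the forbidden pairing), but it fixes the cyclic labeling used in the statement and is needed for the more general planar variants of Kuo condensation employed later in the paper.
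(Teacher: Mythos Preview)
The paper does not supply its own proof of this lemma; it is quoted from Kuo's work and used as a black box later on. Your superposition-of-matchings argument is exactly Kuo's classical proof, and the forward direction is correct: for $(M,M')\in\mathcal{M}(G)\times\mathcal{M}(G-\{p_1,p_2,p_3,p_4\})$ all four $p_i$ are incident only to $M$-edges, so each alternating path in $M\cup M'$ begins and ends with an $M$-edge, hence has odd length, hence joins vertices on opposite sides of the bipartition. This indeed forces the endpoint pairing to be $\{p_1,p_2\}\sqcup\{p_3,p_4\}$ or $\{p_1,p_4\}\sqcup\{p_2,p_3\}$, and your path-swap sends each case injectively into the corresponding right-hand summand.

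The gap is surjectivity, and your closing remark that planarity ``is not strictly required for this bipartite form'' is where it shows. Take $(N,N')\in\mathcal{M}(G-\{p_1,p_2\})\times\mathcal{M}(G-\{p_3,p_4\})$. In $N\cup N'$ the vertices $p_1,p_2$ carry only an $N'$-edge while $p_3,p_4$ carry only an $N$-edge, so the alternating path leaving $p_1$ begins with an $N'$-edge; if it terminates at $p_3$ it ends with an $N$-edge and hence has \emph{even} length, which is compatible with $p_1,p_3\in V_1$. Bipartiteness therefore does \emph{not} exclude the crossing pairing $\{p_1,p_3\}\sqcup\{p_2,p_4\}$ on the right-hand side, and for such $(N,N')$ there is no $p_1$--$p_2$ path to swap, so your inverse is undefined. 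It is precisely the planarity and face--cyclic--order hypothesis that rules this out (two disjoint paths in a plane graph cannot join interleaved boundary points of a face). Concretely, in $K_{3,3}$ with unit edge weights and $p_1,p_3\in V_1$, $p_2,p_4\in V_2$ one gets $w(G)\,w(G-\{p_1,p_2,p_3,p_4\})=6\cdot 1=6$ while the right side is $2\cdot 2+2\cdot 2=8$, so the identity genuinely fails without planarity. Your argument becomes complete once you invoke the non-crossing property at the point where you assert the swap is a bijection.
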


\begin{lemma}[Unbalanced Kuo Condensation; Theorem 5.2 in \cite{kuo1}]\label{Kuo2}
Let $G=(V_1,V_2,E)$ be a planar bipartite graph with $|V_1|=|V_2|+1$. Assume that $p_1,p_2,p_3,p_4$ are four vertices appearing in a cyclic order on a face of $G$. Assume in addition that $p_1,p_2,p_3\in V_1$ and $p_4\in V_2$. Then
\begin{align}
w(G-\{p_2\})w(G-\{p_1,p_3,p_4\})=&w(G-\{p_1\})w(G-\{p_2,p_3,p_4\})\notag\\
&+w(G-\{p_3\})w(G-\{p_1p_2,p_4\}).
\end{align}
\end{lemma}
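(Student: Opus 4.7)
The plan is to prove the unbalanced Kuo condensation identity via a weight-preserving bijective argument using superposition of perfect matchings, a technique that goes back to the original proof of Kuo \cite{kuo1} and which generalizes the classical double-dimer / paths-and-cycles decomposition used in proving the Jacobi--Desnanot identity combinatorially.

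First, I would interpret each side of the identity as a weighted sum over ordered pairs of perfect matchings. The left-hand side counts pairs $(M,M')$ with $M \in \mathcal{M}(G-\{p_2\})$ and $M'\in \mathcal{M}(G-\{p_1,p_3,p_4\})$. The right-hand side counts pairs $(N,N')$ of two types: either $N\in \mathcal{M}(G-\{p_1\})$ and $N'\in \mathcal{M}(G-\{p_2,p_3,p_4\})$, or $N\in \mathcal{M}(G-\{p_3\})$ and $N'\in \mathcal{M}(G-\{p_1,p_2,p_4\})$. Because edge weights are multiplicative and the superposition of two matchings uses precisely the multiset of edges in $M\cup M'$ or $N\cup N'$, any bijection that preserves the multiset of edges will automatically preserve weights.

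Next, I would analyze the superposition graph $H=M\cup M'$ viewed as a spanning subgraph of $G$. Every vertex of $G$ lies in exactly as many of $M,M'$ as it does in $H$: vertices not in $\{p_1,p_2,p_3,p_4\}$ are matched in both $M$ and $M'$ and hence have degree $2$ in $H$, while each of $p_1,p_2,p_3,p_4$ has degree $1$ in $H$ (being missed by exactly one of $M,M'$). Consequently $H$ is a disjoint union of even cycles (alternating between $M$- and $M'$-edges) together with exactly two paths whose four endpoints are $\{p_1,p_2,p_3,p_4\}$. Since $G$ is bipartite with $p_1,p_2,p_3\in V_1$ and $p_4\in V_2$, parity considerations on path lengths show the two paths must pair $p_4$ with exactly one of $p_1,p_2,p_3$. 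The three a priori possible pairings are $\{p_1,p_4\},\{p_2,p_3\}$; $\{p_3,p_4\},\{p_1,p_2\}$; and $\{p_2,p_4\},\{p_1,p_3\}$. Here is the key planar step: since $p_1,p_2,p_3,p_4$ lie in this cyclic order on the boundary of a single face, two vertex-disjoint subgraphs of $G$ cannot simultaneously connect $p_2$ to $p_4$ and $p_1$ to $p_3$, for such a configuration would force an edge crossing in the planar embedding. This rules out the third pairing and leaves precisely the two pairings corresponding to the two RHS terms.

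Finally, I would describe the bijection itself: given an LHS pair $(M,M')$, locate the path $P$ in $H$ with $p_4$ as an endpoint, and swap the $M$- and $M'$-labels along the edges of $P$. The resulting pair $(\widetilde M,\widetilde M')$ has the same underlying edge set (so the weight is preserved), but the sets of uncovered vertices change: if $P$ connects $p_4$ to $p_1$ the new pair lies in $\mathcal{M}(G-\{p_1,p_2,p_4\})\times \mathcal{M}(G-\{p_3\})$, and if $P$ connects $p_4$ to $p_3$ the new pair lies in $\mathcal{M}(G-\{p_2,p_3,p_4\})\times \mathcal{M}(G-\{p_1\})$. (The pairing $\{p_4,p_2\}$ is impossible by the planar argument.) The construction is involutive: given an RHS pair, superposing and switching the path through $p_4$ returns to the LHS. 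Summing weights over both classes of RHS pairs recovers the two terms on the right, completing the proof.

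The main obstacle is the planarity argument that excludes the crossing pairing $\{p_2,p_4\},\{p_1,p_3\}$; one must verify carefully that two edge-disjoint (in fact, vertex-disjoint apart from endpoints) walks in a planar bipartite graph, with endpoints occurring in the cyclic order $p_1,p_2,p_3,p_4$ on a single face, cannot realize this crossing pattern. A clean way to handle this is to contract the bounding face to a point and invoke a Jordan-curve style separation: the path from $p_1$ to $p_3$ together with an arc of the face boundary forms a closed curve separating $p_2$ from $p_4$. Once this combinatorial lemma is in hand, the rest of the argument is bookkeeping about which vertices are uncovered before and after the swap.
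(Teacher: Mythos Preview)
The paper does not prove this lemma; it merely quotes it from Kuo \cite{kuo1} for ease of reference. Your argument is correct and is essentially Kuo's original superposition proof: overlay the two matchings, decompose into cycles and two paths with endpoints $\{p_1,p_2,p_3,p_4\}$, use bipartiteness to force $p_4$ to be paired with one of $p_1,p_2,p_3$, use planarity and the cyclic order on the face to exclude the crossing pairing $\{p_2,p_4\},\{p_1,p_3\}$, and then swap labels along the path through $p_4$ to obtain a weight-preserving involution between the LHS pairs and the two families of RHS pairs.

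A couple of minor points worth tightening. First, the two paths in the superposition are genuinely vertex-disjoint (they are distinct connected components of the multigraph $M\cup M'$), not just ``vertex-disjoint apart from endpoints,'' which makes the Jordan-curve argument cleaner. Second, your ordered-pair bookkeeping swaps the coordinates relative to how the RHS terms are written (you produce $(\widetilde M,\widetilde M')\in \mathcal{M}(G-\{p_1,p_2,p_4\})\times \mathcal{M}(G-\{p_3\})$ rather than the transposed order), but since the identity only involves products of weights this is harmless. Neither point affects the validity of the proof.
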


\begin{lemma}[Non-alternating  Kuo Condensation;  Theorem 5.3 in \cite{kuo1}]\label{Kuo3}
Let $G=(V_1,V_2,E)$ be a planar bipartite graph with $|V_1|=|V_2|$. Assume that $p_1,p_2,p_3,p_4$ are four vertices appearing in a cyclic order on a face of $G$. Assume in addition that $p_1,p_2\in V_1$ and $p_3,p_4\in V_2$. Then
\begin{align}
w(G-\{p_1,p_4\})w(G-\{p_2,p_3\})=&w(G)w(G-\{p_1,p_2,p_3,p_4\})\notag\\
&+w(G-\{p_1,p_3\})w(G-\{p_2,p_4\}).
\end{align}
\end{lemma}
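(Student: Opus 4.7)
The plan is to apply Kuo's superposition-of-matchings technique, standard for identities of this shape. I first interpret each of the four weights as a sum over ordered pairs of perfect matchings: for example, $w(G-\{p_1,p_4\})w(G-\{p_2,p_3\})$ enumerates pairs $(M_1,M_2) \in \mathcal{M}(G-\{p_1,p_4\}) \times \mathcal{M}(G-\{p_2,p_3\})$ weighted by $w(M_1)w(M_2)$, and analogously for the other three products. For any such pair I form the superposition $\Sigma = M_1 \cup M_2$, regarded as a multiset of edges of $G$ in which each edge appears with multiplicity $0$, $1$, or $2$. Since $M_1,M_2$ are matchings and $|V_1|=|V_2|$, every vertex outside $\{p_1,p_2,p_3,p_4\}$ has degree exactly $2$ in $\Sigma$, while each $p_i$ has degree $0$ or $1$ depending on which matching omits it. Consequently $\Sigma$ decomposes uniquely into doubled edges, $M_1/M_2$-alternating even cycles, and two vertex-disjoint alternating paths whose four endpoints are precisely $p_1,p_2,p_3,p_4$.

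The two paths in $\Sigma$ realize one of three combinatorial pairings of the four points: $\mathrm{(I)}=\{p_1p_2,p_3p_4\}$, $\mathrm{(II)}=\{p_1p_3,p_2p_4\}$, or $\mathrm{(III)}=\{p_1p_4,p_2p_3\}$. The key technical ingredient, which I expect to be the crux of the argument, is that pairing $\mathrm{(II)}$ cannot occur because of planarity: since $p_1,p_2,p_3,p_4$ appear in this cyclic order on a common face $F$ of $G$, any two vertex-disjoint paths in $G$ connecting $p_1$ to $p_3$ and $p_2$ to $p_4$ would have to cross once $F$ is used to close one of them into a Jordan curve separating $p_2$ from $p_4$, contradicting the planar embedding of $G$. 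Ruling out $\mathrm{(II)}$ is exactly what forces the right-hand side to have its particular two-term form.

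To conclude, I fix a superposition $\Sigma$ of pairing type $\mathrm{(I)}$ and enumerate its labelings as ordered pairs of matchings: there are two independent binary choices, one per alternating path (the $2^c$ choices from the $c$ alternating cycles contribute a common factor to every term and can be absorbed into $w(\Sigma)$). A direct case check tracking which $p_i$ ends up in $M_1$ versus $M_2$ shows that exactly half of these labelings yield ordered pairs counted by $w(G-\{p_1,p_4\})w(G-\{p_2,p_3\})$ and the other half yield pairs counted by $w(G-\{p_1,p_3\})w(G-\{p_2,p_4\})$; the even parity of the $p_1p_2$ and $p_3p_4$ paths forces $p_1,p_2$ into opposite matchings and likewise for $p_3,p_4$, which forbids the ``all four $p_i$ in one matching'' configuration required by $w(G)w(G-\{p_1,p_2,p_3,p_4\})$. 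A parallel check for pairing $\mathrm{(III)}$, whose odd-length paths put $\{p_1,p_4\}$ and $\{p_2,p_3\}$ each into a single matching, splits its labelings equally between the LHS product and $w(G)w(G-\{p_1,p_2,p_3,p_4\})$, with none contributing to $w(G-\{p_1,p_3\})w(G-\{p_2,p_4\})$. Summing $w(\Sigma)$ over all superpositions of each type then shows that both sides of the identity equal the same quantity $2 W_{\mathrm{(I)}} + 2 W_{\mathrm{(III)}}$, where $W_{\mathrm{(I)}}, W_{\mathrm{(III)}}$ are the total weighted contributions from the two surviving pairing classes, completing the proof.
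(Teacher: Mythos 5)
The paper does not actually prove this lemma: it is quoted verbatim as Theorem 5.3 of Kuo's paper \cite{kuo1}, listed "for ease of reference," so there is no in-paper argument to compare against. Your superposition proof is a correct, self-contained reconstruction of Kuo's original argument: the degree count showing each $p_i$ has degree exactly one in $\Sigma$, the decomposition into doubled edges, alternating cycles, and two vertex-disjoint alternating paths ending at $p_1,p_2,p_3,p_4$, the Jordan-curve exclusion of the crossing pairing $\{p_1p_3,\,p_2p_4\}$, and the $2$--$2$ split of the four path-labelings of each type-$\mathrm{(I)}$ (resp.\ type-$\mathrm{(III)}$) superposition between the left-hand product and $w(G-\{p_1,p_3\})w(G-\{p_2,p_4\})$ (resp.\ $w(G)w(G-\{p_1,p_2,p_3,p_4\})$) all check out. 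One point of emphasis is slightly off: planarity is only genuinely needed to exclude pairing $\mathrm{(II)}$ for superpositions arising from the two right-hand products, where that pairing is parity-consistent; for the left-hand product $w(G-\{p_1,p_4\})w(G-\{p_2,p_3\})$ it is already impossible by the color/parity argument you use elsewhere (a path joining the differently colored $p_1$ and $p_3$ is odd and so has both end-edges in the same matching, yet $p_1$ is covered only by $M_2$ and $p_3$ only by $M_1$). Since planarity rules out pairing $\mathrm{(II)}$ in every case anyway, this does not affect the correctness of your proof.
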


\begin{lemma}[Monochromatic  Kuo Condensation;  Theorem 5.4 in \cite{kuo1}]\label{Kuo4}
Let $G=(V_1,V_2,E)$ be a planar bipartite graph with $|V_1|=|V_2|+2$. Assume that $p_1,p_2,p_3,p_4$ are four vertices appearing in a cyclic order on a face of $G$. Assume in addition that $p_1,p_2,p_3,p_4\in V_1$. Then
\begin{align}
w(G-\{p_1,p_3\})w(G-\{p_2,p_4\})=&w(G-\{p_1,p_2\})w(G-\{p_3,p_4\})\notag\\
&+w(G-\{p_2,p_3\})w(G-\{p_4,p_1\}).
\end{align}
\end{lemma}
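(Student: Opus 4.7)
The plan is to establish this identity via a weight-preserving bijection between pairs of perfect matchings, using the classical superposition argument. Given matchings $M_1 \in \mathcal{M}(G - \{p_1, p_3\})$ and $M_2 \in \mathcal{M}(G - \{p_2, p_4\})$, I would form the multiset union $M_1 \cup M_2$ and examine its connected components. Each vertex outside $\{p_1,p_2,p_3,p_4\}$ is covered by both matchings and hence has degree two in the superposition, sitting on either a doubled edge or an alternating cycle. By contrast, $p_1$ and $p_3$ are uncovered by $M_1$ (so incident to exactly one $M_2$-edge), while $p_2$ and $p_4$ are uncovered by $M_2$ (so incident to exactly one $M_1$-edge). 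Consequently $p_1, p_2, p_3, p_4$ are the only degree-one vertices, and they must be the four endpoints of two alternating $(M_1,M_2)$-paths.

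The next step is to determine which pairings of these endpoints can actually occur. Since all four distinguished vertices lie in $V_1$, an alternating path joining two of them has even length: starting at one endpoint the first edge belongs to the matching that does not cover it, and the alternation forces the last edge to belong to the other matching, which is exactly consistent with the second endpoint also being uncovered by the appropriate matching. Crucially, because $G$ is planar and $p_1, \dots, p_4$ appear in this cyclic order on the boundary of a single face, the two paths (which are vertex-disjoint by the degree constraint) cannot cross. This rules out the pairing $(p_1,p_3), (p_2,p_4)$ and leaves only the two non-crossing pairings $(p_1,p_2), (p_3,p_4)$ and $(p_1,p_4), (p_2,p_3)$.

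Now I would define the bijection by swapping the $M_1$- and $M_2$-roles along both paths. In the first case this produces a new pair $(M_1', M_2') \in \mathcal{M}(G - \{p_1, p_2\}) \times \mathcal{M}(G - \{p_3, p_4\})$; in the second it produces a pair in $\mathcal{M}(G - \{p_2, p_3\}) \times \mathcal{M}(G - \{p_4, p_1\})$. Since the underlying multiset of edges is unchanged by the swap, weights are preserved: $w(M_1)w(M_2) = w(M_1')w(M_2')$. The reverse map is identical (the superposition of a right-hand-side pair exhibits the same two paths with the same endpoint pairing), so this is a genuine bijection. Summing over all pairs in each case, and partitioning the left-hand side of the identity according to which non-crossing pairing arises, yields exactly the two terms on the right-hand side.

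The step that requires the most care is the topological one: verifying that planarity forces non-crossing pairings and that the four cyclically ordered boundary points admit precisely the two configurations above. The vertex-disjointness of the two paths follows from each non-endpoint vertex having degree exactly two in the superposition, and the non-crossing conclusion then follows from a standard Jordan-curve argument applied within the face of $G$ on which the $p_i$ lie. The monochromatic hypothesis $p_1, p_2, p_3, p_4 \in V_1$ is what makes the parity count for alternating paths consistent across all admissible pairings, and combined with $|V_1| = |V_2|+2$ it guarantees that each of the six near-matching weights appearing in the identity corresponds to a graph for which a perfect matching is color-balanced and hence potentially nonempty.
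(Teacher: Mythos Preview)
The paper does not give a proof of this lemma; it is quoted from Kuo \cite{kuo1} for reference. Your superposition argument is precisely Kuo's method and is structurally sound, but two steps are miswritten.

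First, swapping the $M_1$/$M_2$ roles along \emph{both} paths does not move you to the right-hand side: if $M_1$ misses $\{p_1,p_3\}$ and $M_2$ misses $\{p_2,p_4\}$, then after recoloring both paths the new $M_1'$ covers $p_1,p_3$ and misses $p_2,p_4$, so you have merely interchanged the two factors of the left-hand side. The correct operation is to swap along exactly \emph{one} of the two paths. Second, your case assignment is reversed: when the paths pair $(p_1,p_2)$ and $(p_3,p_4)$, the single-path swap lands in $\mathcal{M}(G-\{p_2,p_3\})\times\mathcal{M}(G-\{p_4,p_1\})$, the \emph{second} term on the right, not the first. No recoloring of this superposition can yield an $M_1'\in\mathcal{M}(G-\{p_1,p_2\})$, since that would force both end-edges of the even-length path from $p_1$ to $p_2$ into $M_2'$, contradicting alternation. (By the same parity reasoning the crossing pairing $(p_1,p_3),(p_2,p_4)$ is already excluded on the left-hand side without planarity; the non-crossing hypothesis is genuinely needed only when checking the inverse map from the right-hand side, where parity alone does permit that pairing.)
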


\section{Proof of Theorem \ref{thm:main}} \label{Sec:proof}

We begin by verifying that $Z^\emptyset = [x_1,x_2,x_3,x_4,x_5,x_6]$ equals
$$[c(\mathcal{G}(C_1)), c(\mathcal{G}(C_2)), c(\mathcal{G}(C_3)), c(\mathcal{G}(C_4)), c(\mathcal{G}(C_5)), c(\mathcal{G}(C_6))],$$
or equivalently, by Remark \ref{rem:cov}, $$[c(\widetilde{\mathcal{G}}(C_1)), c(\widetilde{\mathcal{G}}(C_2)), c(\widetilde{\mathcal{G}}(C_3)), c(\widetilde{\mathcal{G}}(C_4)), c(\widetilde{\mathcal{G}}(C_5)), c(\widetilde{\mathcal{G}}(C_6))],$$ where the $C_i$'s are the initial contours from (\ref{contours1})-(\ref{contours3}).

Each  graph $\widetilde{\mathcal{G}}(C_i)$ is a unit triangle in the brane tiling $\mathcal{T}$ with all but two vertices removed.  In all six of these cases, there is a single edge remaining which is the lone contribution to a perfect matching of $\widetilde{\mathcal{G}}(C_i)$.  Thus multiplying the covering monomial and weight of the unique perfect matching together we get
$$m(\widetilde{\mathcal{G}}(C_1)) = m(\widetilde{\mathcal{G}}(C_4)) = m(\widetilde{\mathcal{G}}(C_5))= x_1x_4x_5,$$
$$m(\widetilde{\mathcal{G}}(C_2)) = m(\widetilde{\mathcal{G}}(C_3)) = m(\widetilde{\mathcal{G}}(C_6)) = x_2x_3x_6,$$
$$w(\widetilde{\mathcal{G}}(C_1)) = \frac{1}{x_4x_5},~~ w(\widetilde{\mathcal{G}}(C_2)) = \frac{1}{x_3x_6},~~ w(\widetilde{\mathcal{G}}(C_3)) = \frac{1}{x_2x_6},$$
$$w(\widetilde{\mathcal{G}}(C_4)) = \frac{1}{x_1x_5},~~ w(\widetilde{\mathcal{G}}(C_5)) = \frac{1}{x_1x_4},~~ w(\widetilde{\mathcal{G}}(C_6)) = \frac{1}{x_2x_3}.$$
In all these cases $c(\mathcal{G}(C_i)) =c(\widetilde{\mathcal{G}}(C_i))= m(\widetilde{\mathcal{G}}(C_i))w(\widetilde{\mathcal{G}}(C_i)) = x_i$ as desired.
See Figure \ref{fig:C12}.
This verifies the desired combinatorial interpretation of the cluster variables $z_{i}^{j,k}$ for $(i,j,k) \in \{
(0,-1,0), (0,-1,1), (-1,0,0), (-1,0,1), (0,0,0), (0,0,1)\}$.

\begin{figure}
\includegraphics[width=1.5in]{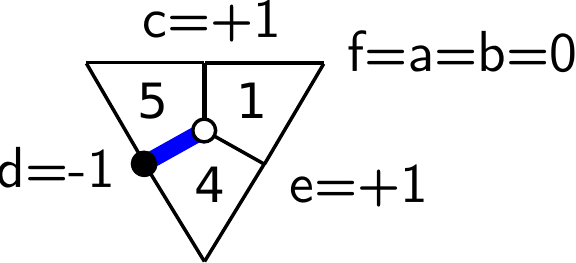} \hspace{3em}
\includegraphics[width=1.5in]{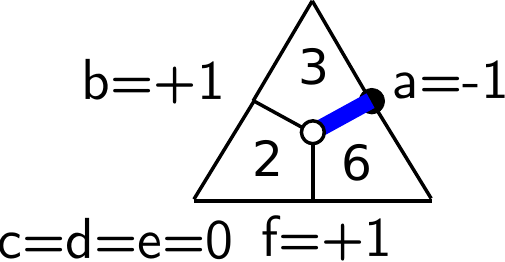}
\caption{Examples of $\widetilde{\mathcal{G}}(C_1)$ and $\widetilde{\mathcal{G}}(C_2)$.}
\label{fig:C12}
\end{figure}

The proof continues by induction.  As in Section \ref{sec:directions}, toric mutations correspond to $30$ possible transformations $(i,j,k) \to (i',j',k')$.   It suffices to turn each of these geometric moves into an algebraic recurrence.  We proceed to accomplish this as follows:

Step 1: Let $(d_A,d_B,d_C,d_D,d_E,d_F)$ denote the difference between the six-tuple given as $\phi(i,j,k) = (a,b,c,d,e,f)$ and the six-tuple $\phi(i',j',k') = (a',b',c',d',e',f')$. Based on the description in Section \ref{sec:directions}, the $30$ possibilities for $(d_A,d_B,d_C,d_D,d_E,d_F)$ are
$\pm (1, 0, -2, 3, -2, 0)$, $\pm (-2, 1 ,-1, 2, -3, 3)$, $\pm (2,-2, 0, 2, -2, 0)$, or one of these up to rotation, categorized as (R4), (R1), or (R2), respectively.  We let $\mathcal{C}$ and $\mathcal{C}'$ denote the contours
$$\mathcal{C} = {\mathcal{C}}(a,b,c,d,e,f) \mathrm{~~and~~} \mathcal{C}' = {\mathcal{C}}(a',b',c',d',e',f').$$
Step 2:  We create a new contour $\mathcal{O}$ by superimposing a shift of $\mathcal{C}'$ on top of $\mathcal{C}$ and drawing the contour obtained by taking the outer boundary.  We shift according to the following rules:

(a) If $d_A \geq 2$ (resp. $d_A \leq -2$) we first shift $\mathcal{C}'$ to the left (resp. right) by $(-1,0)$ (resp. $(1,0)$), i.e. parallel to side $f$.

(b) If $d_F \geq 2$ (resp. $d_F \leq -2$) we instead or then shift $\mathcal{C}'$ diagonally by $(\frac{1}{2}, -\frac{\sqrt{3}}{2})$ (resp. $(-\frac{1}{2}, \frac{\sqrt{3}}{2})$), i.e. parallel to side $a$.

(c) In the cases when $|d_A|$ and $|d_F| \geq 2$, both of these two shifts will occur.  However we observe that $d_A$ and $d_F$ cannot have the same sign and thus we get a total shift of $\mathcal{C}'$ of either $(\frac{3}{2}, -\frac{\sqrt{3}}{2})$ or $(-\frac{3}{2}, \frac{\sqrt{3}}{2})$.

(d) In the remaining cases, we also shift $\mathcal{C}'$ by $(-1,0)$ (resp. $(1,0)$) if $d_A = 1$ (resp. $d_A = -1$) and $d_F = 0$.  Similarly, we shift by
$(\frac{1}{2}, -\frac{\sqrt{3}}{2})$ (resp. $(-\frac{1}{2}, \frac{\sqrt{3}}{2})$) if $d_A = 0$ and $d_F = 1$ (resp. $d_F=-1$).  Lastly, we do not shift
$\mathcal{C}'$ at all if $\{d_A,d_F\} = \{-1,1\}$.

Based on these rules, the local configuration around the corner where sides $f$ and $a$ meet in $\mathcal{C}$ (and sides $f'$ and $a'$ meet in $\mathcal{C}'$) are one of the possibilities illustrated in Figure \ref{fig:AA} (if $|d_A| = 2$, $|d_A|= 3$, or both $d_A = \pm 1$ and $d_F=0$), Figure \ref{fig:FF} (if $|d_F| = 2$, $|d_F|= 3$, or both $d_F = \pm 1$ and $d_A=0$), Figure \ref{fig:NN} (if neither of these happen), or Figure \ref{fig:AF} (if both of these conditions occur).

For brevity, we only illustrate the cases where $d_A \geq 0$ and $d_F \leq 0$ since allowing $d_A < 0$ or $d_F > 0$ merely switches the role of $\mathcal{C}$ and $\mathcal{C}'$.  We also only illustrate the case where side lengths $a$ and $f$ are positive (except in Figure \ref{fig:NN}) since changing their signs or values does not affect the relative position of the endpoints of sides $a$, $a'$, $f$, and $f'$.  This is clear in Figure \ref{fig:NN} and extends to the other cases as well.

\begin{figure}
\includegraphics[width=4.5in]{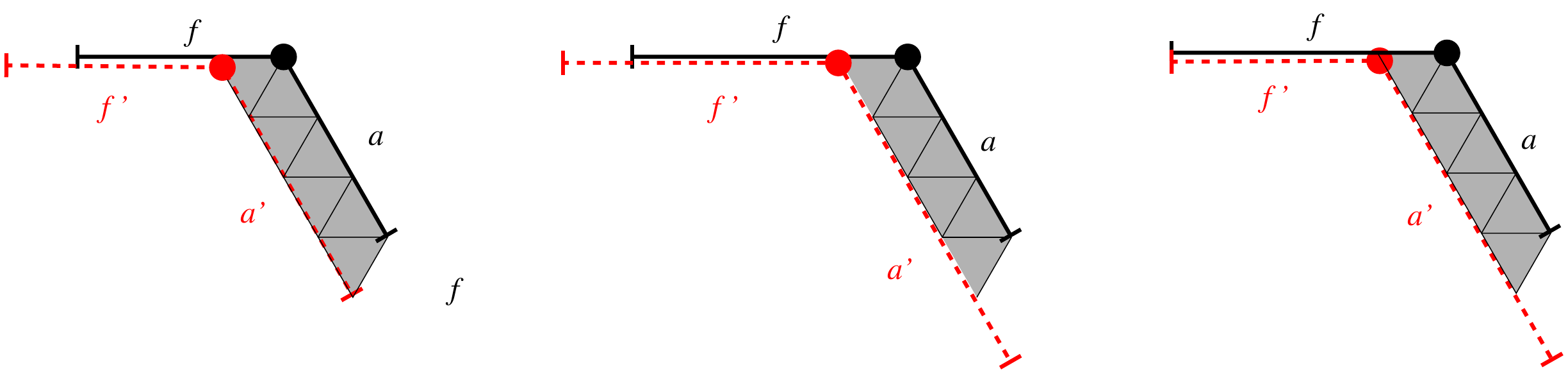}
\caption{The cases with a shift of $(-1,0)$ for $\mathcal{C}'$.  From Left to Right: (i) $d_F=0$, $d_A=1$, (ii) $d_F=0$, $d_A=2$, and (iii) $d_F=-1$, $d_A=2$.}
\label{fig:AA}
\end{figure}

\begin{figure}
\includegraphics[width=4.5in]{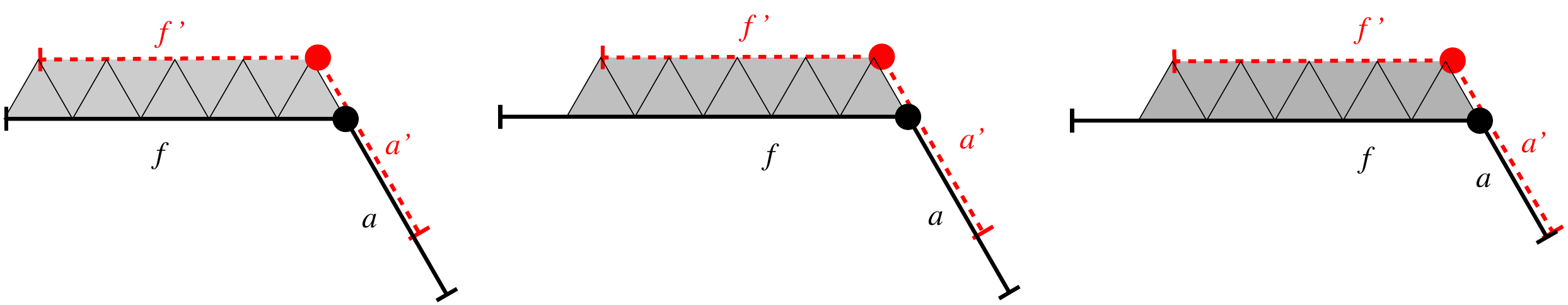}
\caption{The cases with a shift of $(1/2,-\sqrt{3}/2)$ for $\mathcal{C}'$.  From Left to Right: (i) $d_F=-1$, $d_A=0$, (ii) $d_F=-2$, $d_A=0$, and (iii) $d_F=-2$, $d_A=1$.}
\label{fig:FF}
\end{figure}

\begin{figure}
\includegraphics[width=4.5in]{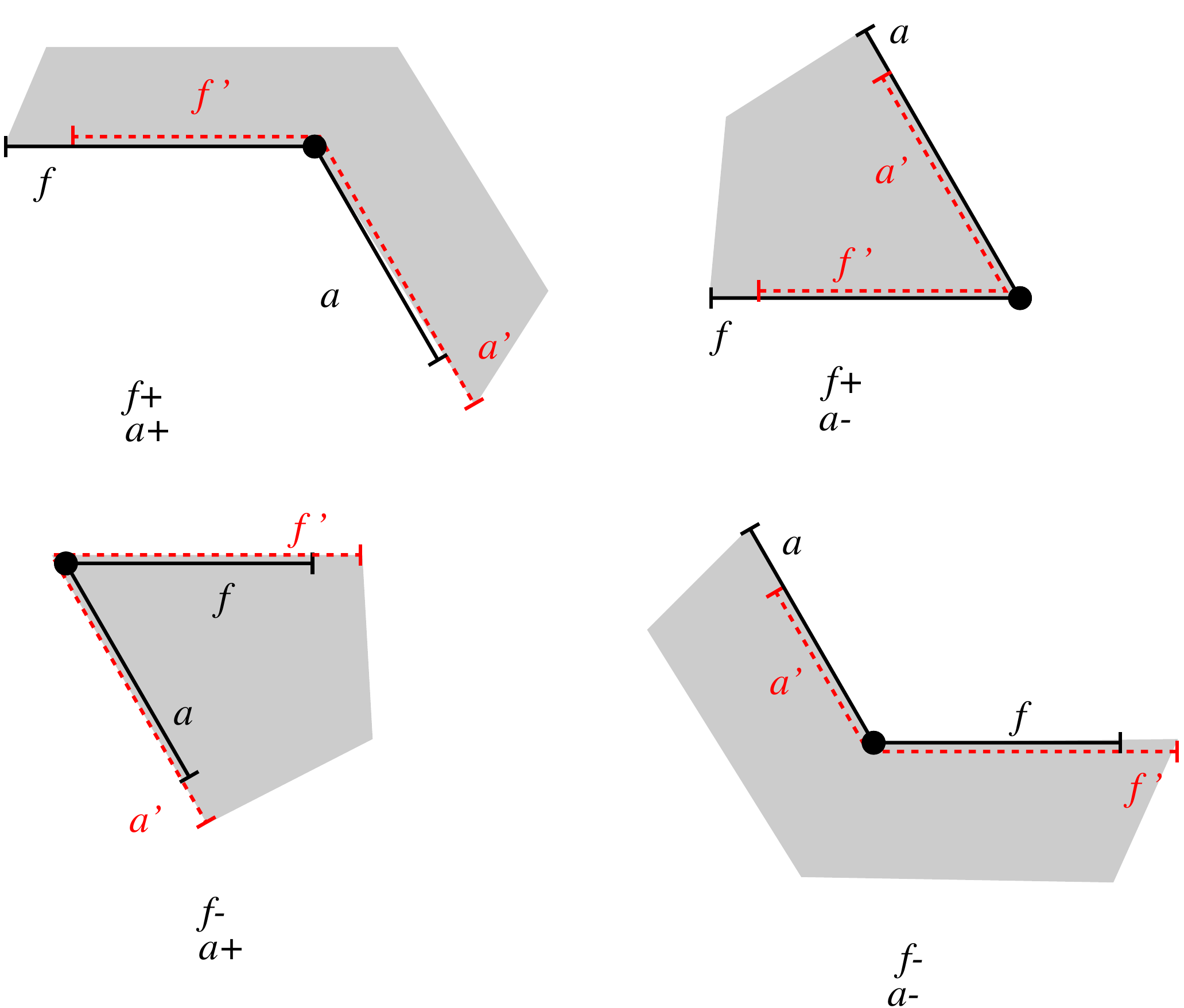}
\caption{The case with no shift.  There are only two possibilities, $(d_F,d_A)=(1,-1)$ or $(d_F,d_A)=(-1,1)$.  We illustrate the latter but unlike Figures \ref{fig:AA}, \ref{fig:FF}, and \ref{fig:AF}, illustrate the shape of the corner as $a$ and $f$ vary in sign.  }
\label{fig:NN}
\end{figure}

\begin{figure}
\includegraphics[width=4.5in]{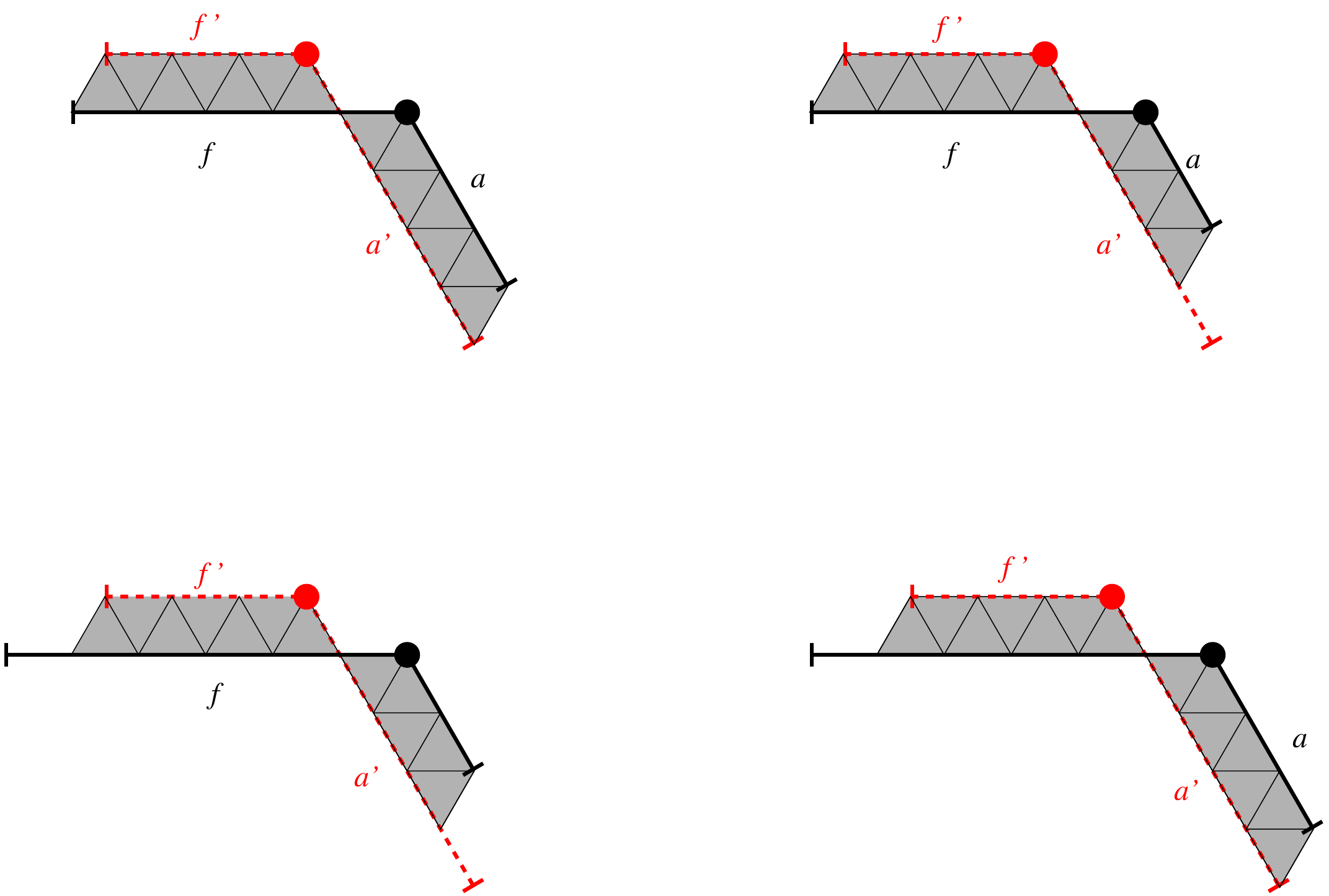}
\caption{Lastly, we illustrate the cases with a shift of $(3/2,-\sqrt{3}/2)$ for $\mathcal{C}'$.  (i) In the top-left corner, $d_F=-2$, $d_A=2$, (ii) in the top-right, $d_F=-2$, $d_A=3$, (iii)
in the bottom-left, $d_F=-3$, $d_A=3$, and (iv) in the bottom-right, $d_F=-3, d_A=2$.}
\label{fig:AF}
\end{figure}

Step 3: By inspecting these figures, we claim that this outer contour $\mathcal{O}$ either coincides or is shifted one unit away from the contour $\mathcal{C}$ (resp. $\mathcal{C}'$) along side $a$ (resp. $a'$), as well as $f$ (resp. $f'$).  Its behavior is also completely determined by the pair $(|d_A|, |d_F|)$.

Considering instead the local configuration in the neighborhoods of the corners where $a$ and $b$ meet in $\mathcal{C}$ (as well as where $a'$ and $b'$ meet in $\mathcal{C}'$), we obtain rotations of Figures \ref{fig:AA}, \ref{fig:FF}, \ref{fig:NN}, and \ref{fig:AF} by $60^\circ$ clockwise.  Based on the possible six-tuples for $(d_A,d_B,d_C,d_D,d_E,d_F)$ given in Step 1, we see that the ordered pairs $(d_F,d_A)$ lead to the possibilities for $(d_A,d_B)$ as given by the following tables\footnote{We only show the cases where $d_A \geq 0$ and $d_F \leq 0$ since the case where $d_A < 0$ or $d_F > 0$ is analogous.}:

$$\begin{array}{r|c|c|c|c|c|c}
(d_F,d_A) & (0,1) & (0,2) &                       (-1,2) & (-1,0) & (-2,0) &                        (-2,1)  \\
\hline (d_A,d_B) & (1,0) & (2,-2) \mathrm{~or~} (2,-3) & (2,-3) & (0,2) & (0,1) \mathrm{~or~} (0,2) & (1,-1)
\end{array}$$

$$\begin{array}{r|c|c|c|c|c}
(d_F,d_A) &  (-1,1) & (-2, 2) & (-2, 3) &                     (-3, 3) & (-3,2) \\
\hline (d_A,d_B) &  (1,-2) & (2, 0) & (3, -2) \mathrm{~or~} (3,-3) & (3, -2) & (2,0) \mathrm{~or~} (2,-1)
\end{array}$$

Based on the rules of Step 2 and these two tables, we conclude that these two local configurations can be consistently glued together into a shape involving the three sides $\{f,a,b\}$ (resp. $\{f',a',b'\}$).  Inductively, all six sides can be glued together in this way, and we see from these figures that the outer contour $\mathcal{O}$ is built by taking the longer side at each corner (when the two parallel sides do not overlap).  It follows that $\mathcal{O}$ is a (six-sided) contour just as defined in the beginning of Section \ref{Sec:Contours}.

Step 4: Comparing the outer contour $\mathcal{O}$ to the contours $\mathcal{C}$ and $\mathcal{C}'$, respectively, some of the sides of $\mathcal{O}$ lie to the strict left of $\mathcal{C}$ (call these positive), others to the strict right (call these negative).  We compare $\mathcal{O}$ and $\mathcal{C}'$ analogously.

As explained in Step 3, it is sufficient to look at the six ordered pairs
$(d_F,d_A)$, $(d_A,d_B)$, $(d_B,d_C)$, $(d_C,d_D)$, $(d_D,d_E)$, and $(d_E,d_F)$ to determine which sides are positive, which sides are negative, and which are neither.  Based on the description in Section \ref{sec:directions} and in Step 1, we conclude that in all cases, there are exactly four sides which are positive or negative.  Hence by taking the appropriate linear combination of $6$-tuples associated to sides, as appearing in Lemma \ref{claim:signtuple}, we can recover the contour $\mathcal{C}$ or the contour $\mathcal{C}'$ from $\mathcal{O}$.

Step 5: By case-by-case analysis, we see that the possibilities for $(d_A,d_B,d_C,d_D,d_E,d_F)$ discussed in Step 1 can be written as the linear combination
$$c_A (-1,1,0,0,0,1) + c_B(1,-1,1,0,0,0) + c_C(0,1,-1,1,0,0)$$ $$ \hspace{5em} + ~~ c_D(0,0,1,-1,1,0) + c_E(0,0,0,1,-1,1) + c_F(1,0,0,0,0,1,-1),$$
where two of these coefficients are $+1$, two are $-1$, and two are $0$.
For example, $$(1,0,-2,3,-2,0) \leftrightarrow c_A=-1, c_C=+1, c_D=-1, c_E=+1,$$
$$(-2,1,-1,2,-3,3) \leftrightarrow c_A=+1, c_D=-1, c_E=+1, c_F=-1, \mathrm{~ and}$$
$$(2,-2,0,2,-2,0) \leftrightarrow c_A=-1, c_B=+1, c_D=-1, c_E=+1.$$
The set of nonzero coefficients exactly match up with the set of special sides identified in Step 4.  Thus may also use from Lemma \ref{claim:signtuple} to switch our point of view from the contours $\mathcal{C}(a,b,c,d,e,f)$ to the actual subgraphs $\widetilde{\mathcal{G}}(a,b,c,d,e,f)$.

Consequently, we now build the graph $H$, which is defined as $H= \widetilde{\mathcal{G}}(\mathcal{O})$. We are able to use the methods of Section \ref{sec:directions} to pick four points $X$, $Y$, $W$, $Z$ (in $\{A,B,C,D,E,F\}$) on $H$ with appropriate colors based on the special sides identified in Step 4.  The colors are determined by the sign of the side.  Because the contour $\mathcal{O}$ always overlaps with either $\mathcal{C}$ or $\mathcal{C}'$ (or both) along each side, we get a set partition $\{S_1, S_2\}$ of the points $\{X,Y,W,Z\}$ such that the removal of the points $S_1$ from $H$ yields the graph $G = \widetilde{\mathcal{G}}(\mathcal{C})$ and the removal of the points $S_2$ from $H$ yields the graph $G' = \widetilde{\mathcal{G}}(\mathcal{C}')$.

Step 6: Based on this set partition and the color pattern of the four points involved, one of the four possible versions of Kuo Condensation applies.  By construction, the left-hand-side will involve graphs $G = \widetilde{\mathcal{G}}(\mathcal{C})$ and $G' = \widetilde{\mathcal{G}}(\mathcal{C}')$. The appropriate application of Kuo condensation, i.e. Lemma \ref{Kuo1}, \ref{Kuo2}, \ref{Kuo3}, or \ref{Kuo4}, dictates the appropriate graphs on the right-hand-side accordingly.
Assume that we get the general Kuo recurrence:

\begin{align}\label{eq:kuogen1}
w(H-S_1)w(H-S_2)=w(H-S_3)w(H-S_4)+w(H-S_5)w(H-S_6),
\end{align}
where $S_1,S_2,\dotsc, S_6$ are certain subsets of $\{X,Y,W,Z\}$, and $S_{i+1}=\{X,Y,W,Z\}-S_i$, for $i=1,3,5$. By applying Lemma \ref{claim:signtuple}, we are able to obtain a contour $\mathcal{C}_i$ by adding or subtracting the appropriate six-tuples from $\mathcal{O}$ as dictated by the subset $S_i$.  After removing the forced edges from the graph in the above recurrence, we get the graphs $G=G_1=\widetilde{\mathcal{G}}(\mathcal{C}_1),G'=G_2=\widetilde{\mathcal{G}}(\mathcal{C}_2),G_3=\widetilde{\mathcal{G}}(\mathcal{C}_3),G_4=\widetilde{\mathcal{G}}(\mathcal{C}_4),G_5=\widetilde{\mathcal{G}}(\mathcal{C}_5),G_6=\widetilde{\mathcal{G}}(\mathcal{C}_6)$.

 It is easy to see that the removal of the point $X$ (resp. $Y,W,Z$) reduces the covering monomial of $H$ by exactly one face (see Figure \ref{fig:6-cov-mon}). In particular, this face can be determined uniquely from Figures \ref{fig:forced} and \ref{fig:forced2}. If $X$ (resp. $Y,W,Z$) is white then the face is the one inside the shaded triangle adjacent to $X$ (resp. $Y,W,Z$); if $X$ (resp. $Y,W,Z$) is black then the face is the one inside the shaded triangle adjacent to $X$ (resp. $Y,W,Z$) and the side $x$ (resp. $y,w,z$). We denote this face by $t_X$ (resp. $t_Y,t_W,t_Z$).
In particular, the face $t_X$ has label $2,5,3,1,6,4$ when $X$ is white $A,B,C,D,E,F$, respectively; and $t_X$ is labeled by $5,3,1,6,4,2$ when $X$ is black $A,B,C,D,E,F$, respectively.

\begin{figure}
\includegraphics[width=12cm]{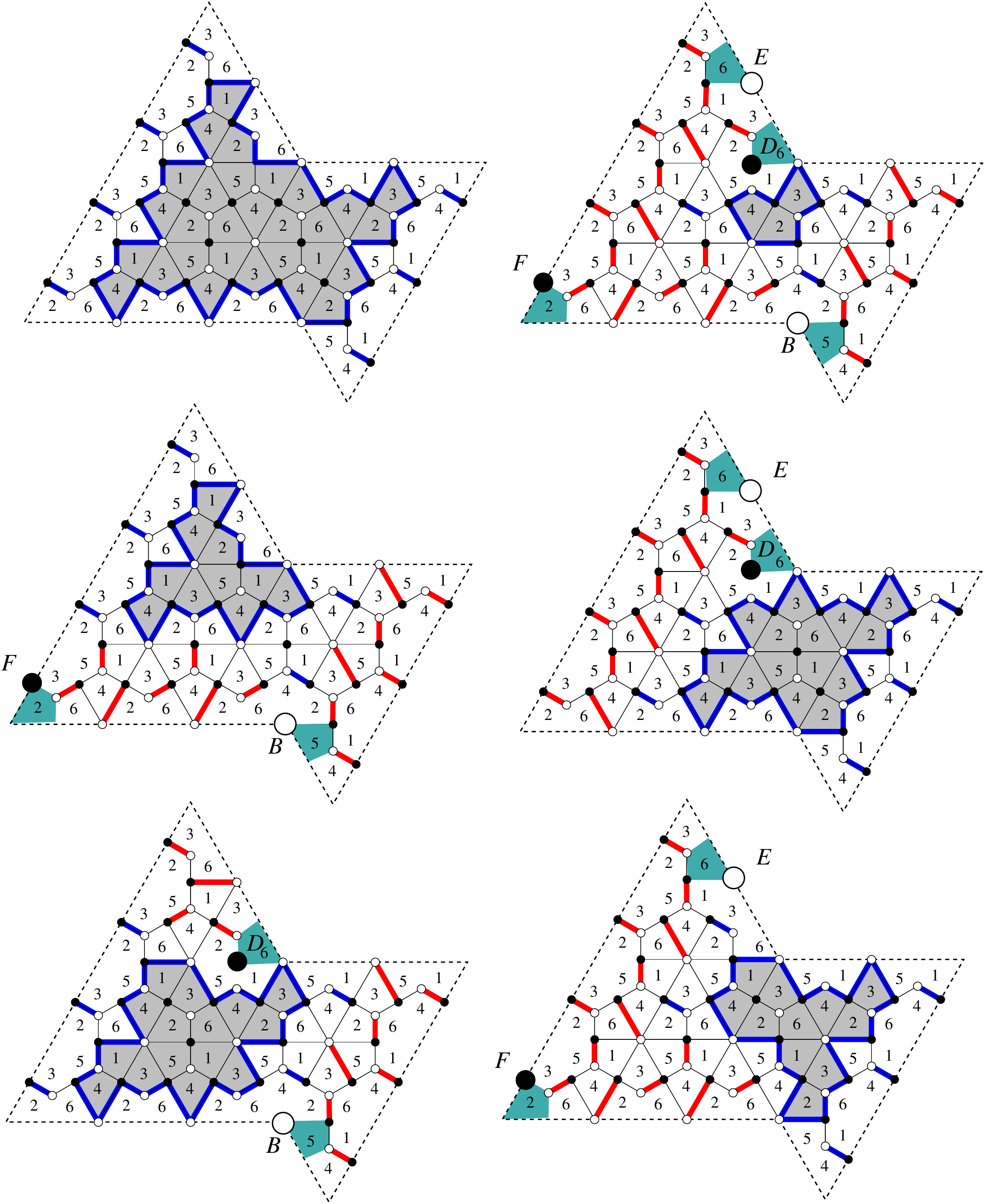}
\caption{Illustrating how covering monomials are affected by the removal of points.}
\label{fig:6-cov-mon}
\end{figure}

Just like the case for $\widetilde{\mathcal{G}}(a,b,c,d,e,f)$, we define the covering monomial, $m(H)$, of $H$ as the product of weight of all faces restricted inside the contour $\mathcal{O}$ associated to $H$. We also define the covering monomial, $m(H-S_i)$, of the graph $H-S_i$ by
\[m(H-S_i)=\frac{m(H)}{wt(S_i)},\]
where $wt(S_i)$ is the product of the weights of all faces corresponding to the vertices in $S_i$; and let $c(H-S_i)=m(H-S_i)w(H-S_i)$.  By definition, we have
\begin{align}
m(H-S_1)m(H-S_2)&=m(H-S_3)m(H-S_4)=m(H-S_5)m(H-S_6)\notag\\
&=\frac{m(H)^2}{wt(t_X)wt(t_Y)wt(t_W)wt(t_Z)}.\end{align}
Thus, (\ref{eq:kuogen1}) is equivalent to
\begin{align}\label{eq:kuogen2}
c(H-S_1)c(H-S_2)=c(H-S_3)c(H-S_4)+c(H-S_5)c(H-S_6).
\end{align}

Since $G_i$ is obtained from $H-S_i$ by removing forced edges, we have $w(H-S_i)/w(G_i)$ is the product of the weights of all forced edges, and $m(H-S_i)/m(G_i)$ is the product of the weights of all faces adjacent to these forced edges. However, the two products cancel each other out. It means that $c(H-S_i)=c(G_i)$. Therefore, equation (\ref{eq:kuogen2}) implies
\begin{align}\label{eq:kuogen3}
c(G)c(G')=c(G_3)c(G_4)+c(G_5)c(G_6).
\end{align}

In conclusion, for any contour without self-intersection, we have $z_{i}^{j,k} =c(\widetilde{\mathcal{G}}(\mathcal{C}_i^{j,k}))=c(\mathcal{G}(\mathcal{C}_i^{j,k})) = z(a,b,c,d,e,f)$ (using the notation of Section \ref{sec:comb}) as desired, finishing the proof of the theorem.

\section{Further Examples} \label{sec:examp}

In this section, we provide a number of graphics illustrating the methods used in Sections \ref{sec:directions} and \ref{Sec:proof} to prove Theorem \ref{thm:main}.
Firstly, we provide a sample of the graphics obtained in \cite{sage} for a variety of examples of the contours $\mathcal{C}$ and shifted $\mathcal{C}'$.  In these examples, we start with $\mathcal{C}=\mathcal{C}_i^{j,k}$ and $\mathcal{C}' = \mathcal{C}_{i+d_i}^{j+d_j,k+d_k}$.  We visualize the superposition and the outer contour $\mathcal{O}$ via the command  ~  {\tt SuperO(i,j,k,[$d_i$,$d_j$,$d_k$])}.  The code also outputs the six-tuples $(a,b,c,d,e,f)$, $(a',b',c',d',e',f')$, the type of recurrence involved, and the associated shift.

\vspace{10em}

\begin{sageblock}
SuperO(-7,-7,-7,[1,-1,2])
\end{sageblock}

\begin{center}\includegraphics[width=4in]{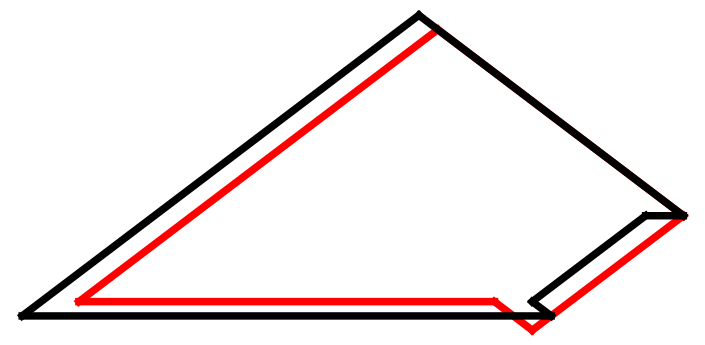}
\end{center}

$$(-14, 21, -14, 1, 6, 1), (-13, 19, -11, -2, 8, 0), R1, (0, 0)$$

\begin{sageblock}
SuperO(-10,-10,8,[-1,2,-1])
\end{sageblock}

\begin{center}\includegraphics[width=4in]{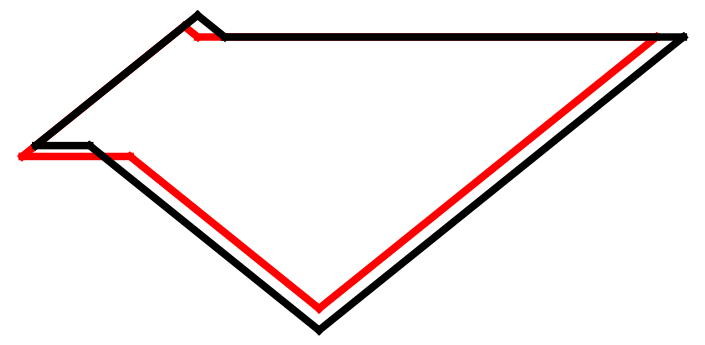}
\end{center}

$$(-2, 12, -2, -17, 27, -17), (-1, 12, -4, -14, 25, -17), R4, (-1,0)$$

\begin{sageblock}
SuperO(-10,-10,8,[-1,2,1])
\end{sageblock}

\begin{center}\includegraphics[width=4in]{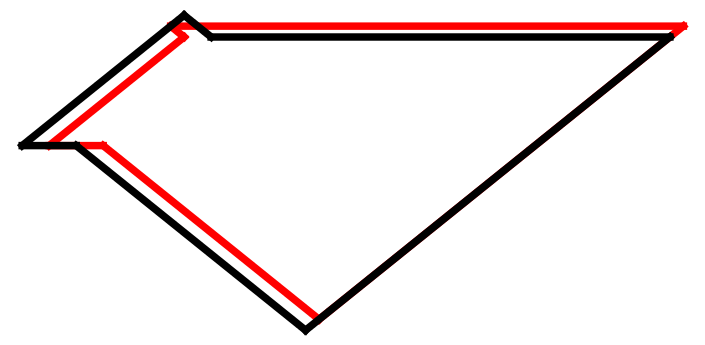}
\end{center}

$$(-2, 12, -2, -17, 27, -17), (1, 10, -2, -16, 27, -19), R4, (-\frac{3}{2},\frac{\sqrt{3}}{2})$$

\vspace{5em}

\begin{sageblock}
SuperO(-4,-10,8,[2,0,0])
\end{sageblock}

\begin{center}\includegraphics[width=4in]{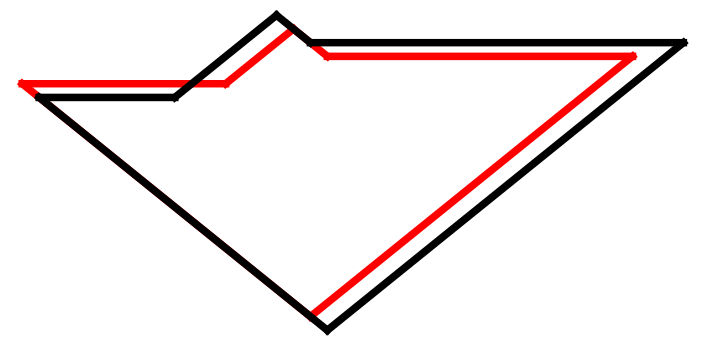}
\end{center}

$$(-2, 6, 4, -17, 21, -11), (-2, 4, 6, -17, 19, -9), R2, (\frac{1}{2},-\frac{\sqrt{3}}{2})$$

\begin{sageblock}
SuperO(8,-9,8,[2,0,0])
\end{sageblock}

\begin{center}\includegraphics[width=4in]{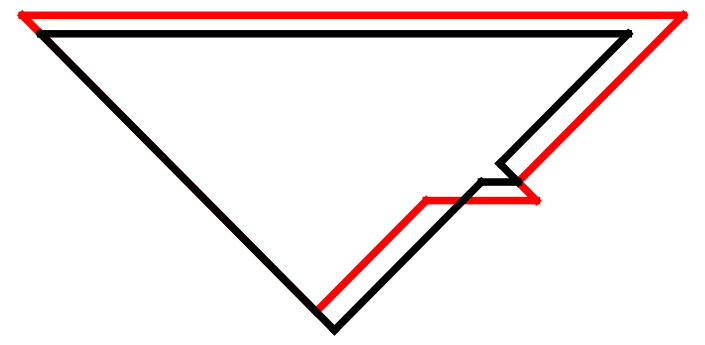}
\end{center}

$$(-1, -7, 16, -16, 8, 1), (-1, -9, 18, -16, 6, 3), R2,  (\frac{1}{2},-\frac{\sqrt{3}}{2})$$

\begin{sageblock}
SuperO(20,-6,8,[1,0,2])
\end{sageblock}

\begin{center}\includegraphics[width=4in]{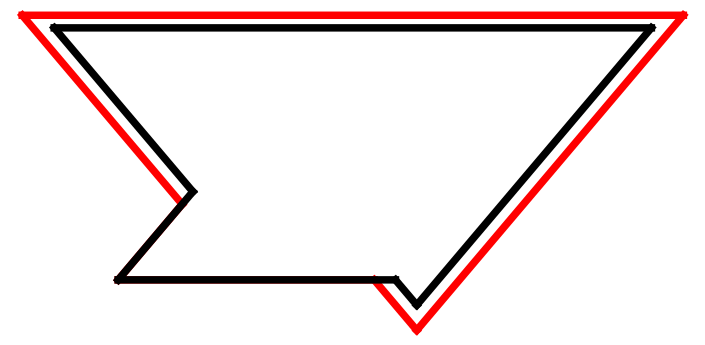}
\end{center}

$$(2, -22, 28, -13, -7, 13), (4, -25, 31, -15, -6, 12), R1, (-1,0)$$

In Figures \ref{fig:egg} - \ref{fig:eggs}, we provide additional examples and fill in the associated contours with the corresponding subgraphs of the brane tiling $\mathcal{T}$.  We illustrate our use of the four types of Kuo Condensation used to get our algebraic recurrences in Step 6 of the proof of Theorem \ref{thm:main}.

\begin{figure}
\includegraphics[width=12cm]{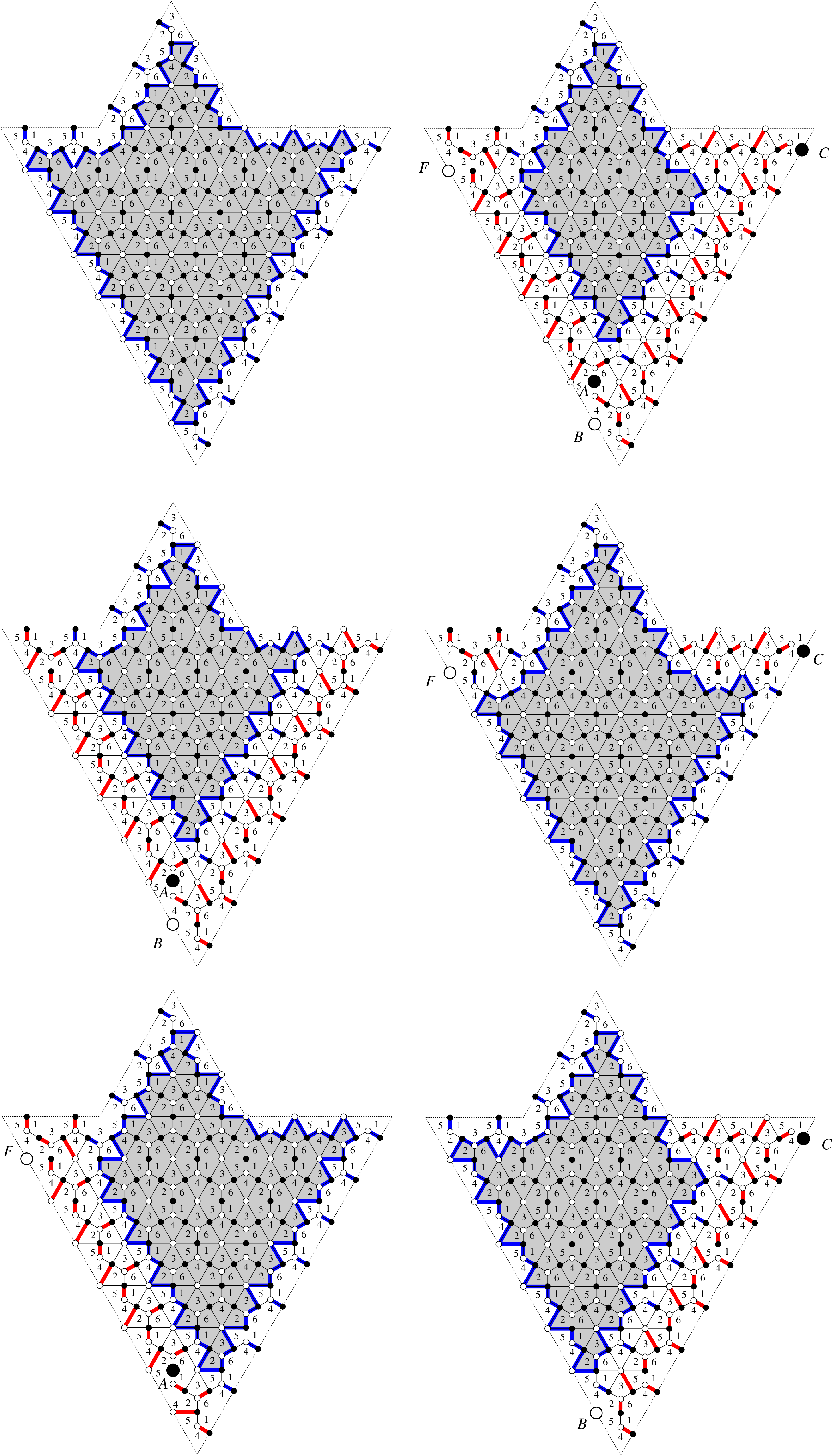}
\caption{Example of the recurrence (R1) using Balanced Kuo Condensation with $i=0, j=5, k=3$.}
\label{fig:egg}
\end{figure}

\begin{figure}
\includegraphics[width=12cm]{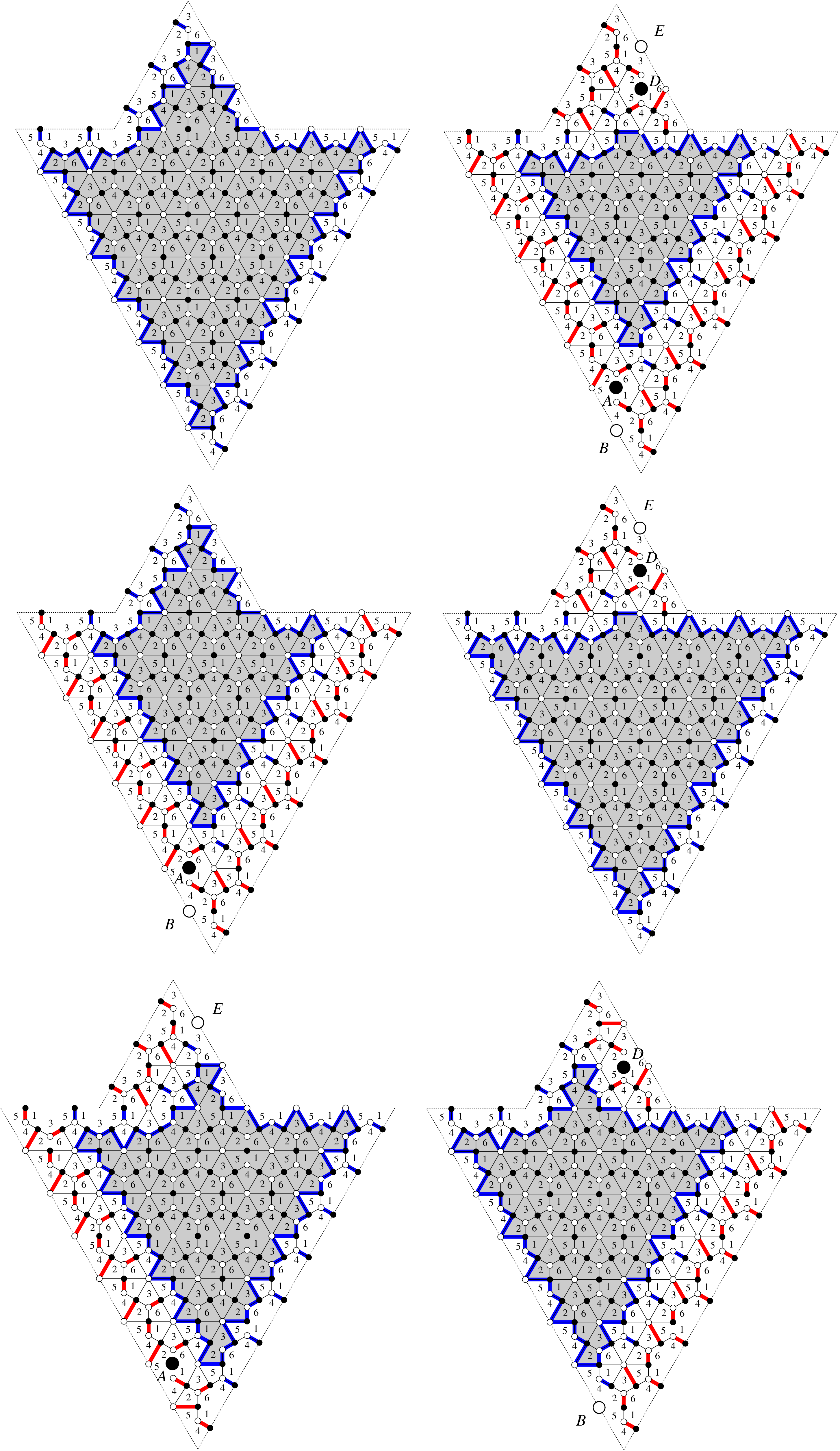}
\caption{Example of the recurrence (R2) using Balanced Kuo Condensation with $i=0, j=5,k=3$.}
\end{figure}

\begin{figure}
\includegraphics[width=12cm]{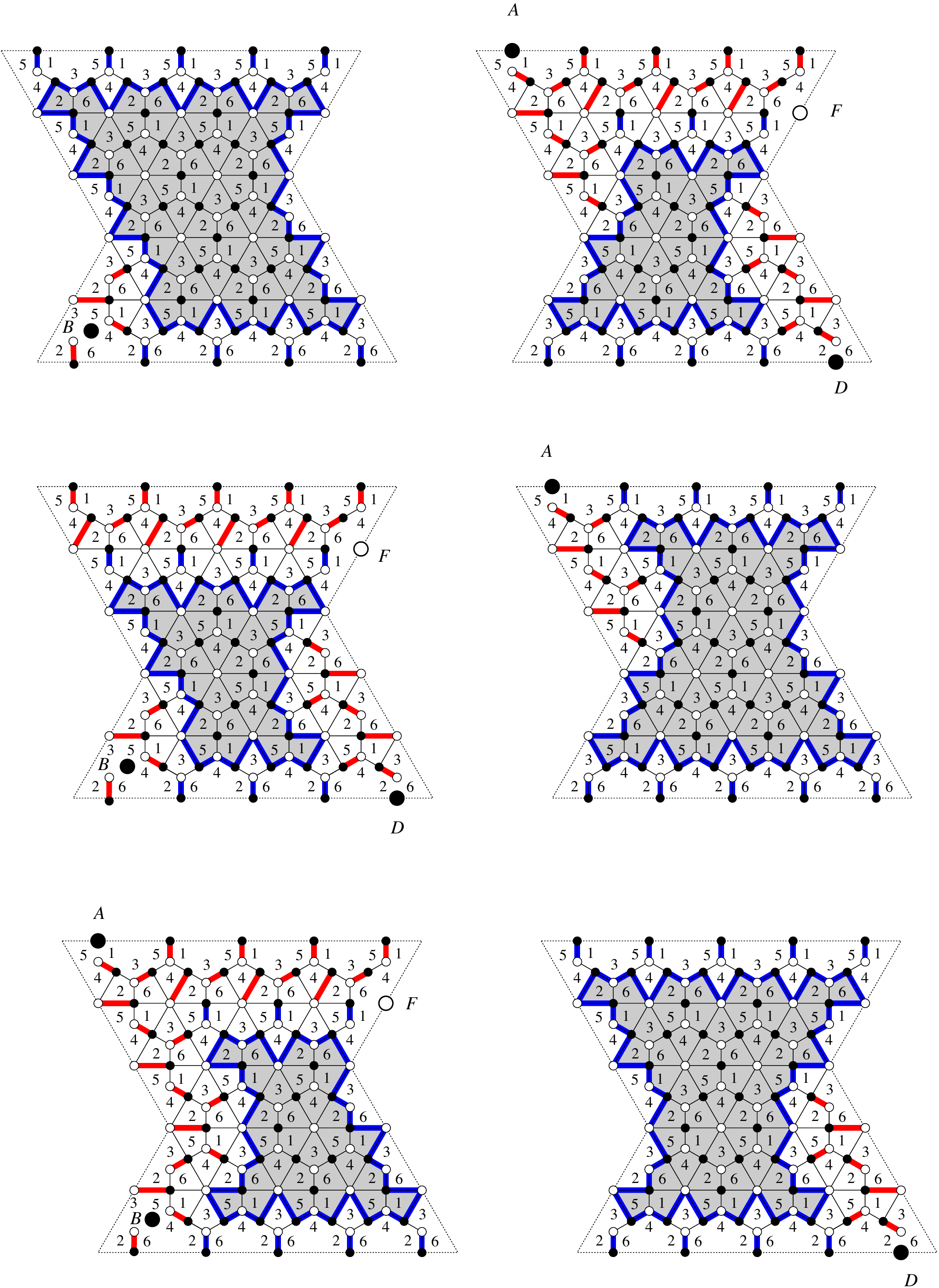}
\caption{Example of  the recurrence (R4) using Unbalanced Kuo Condensation with $i=-5, j=3,k=1$.}
\end{figure}

\begin{figure}
\includegraphics[width=11cm]{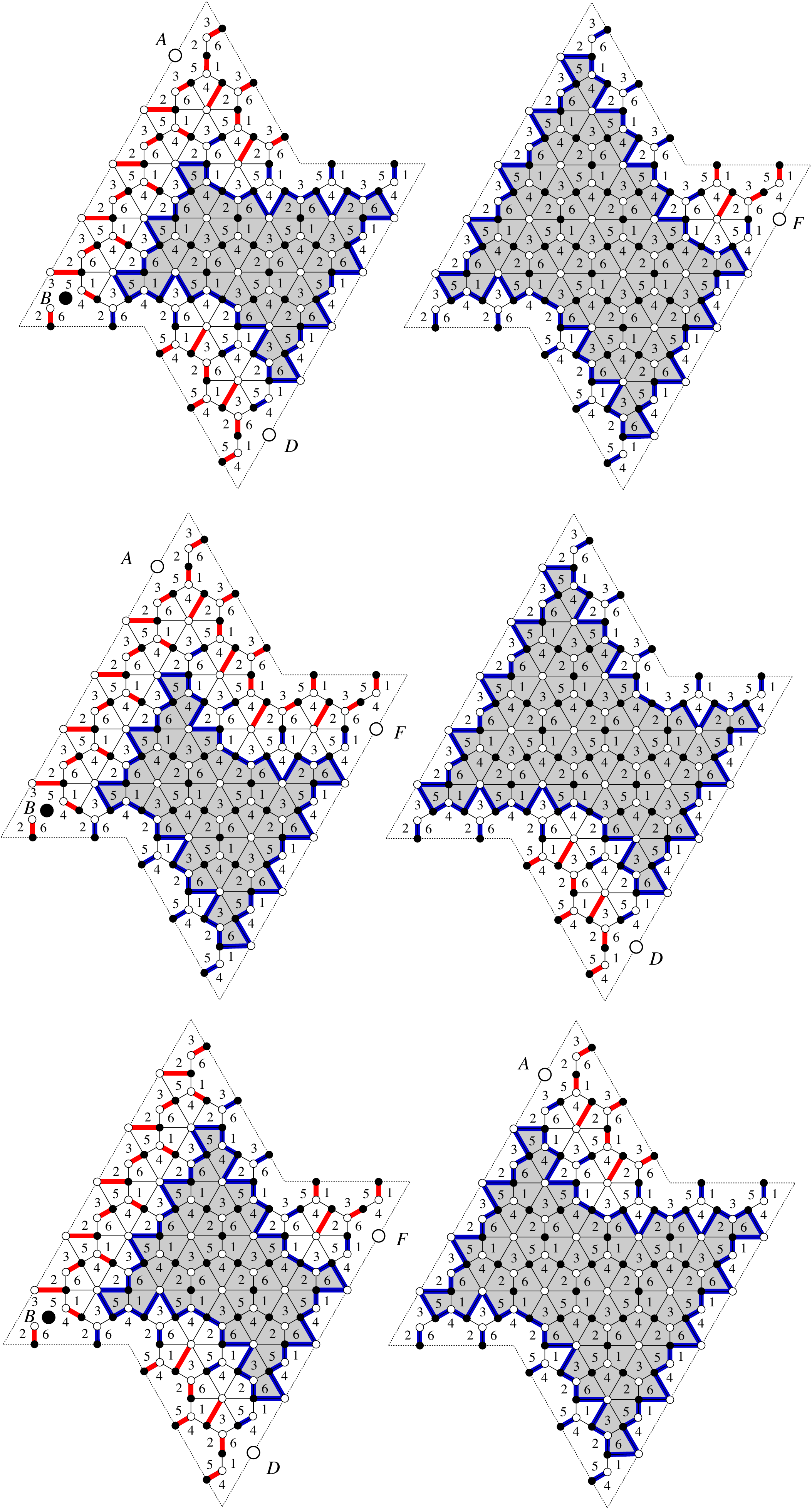}
\caption{Example of (R4) using Unbalanced Kuo Condensation with $i=-3, j=-2,k=1$.}
\end{figure}

\begin{figure}
\includegraphics[width=12cm]{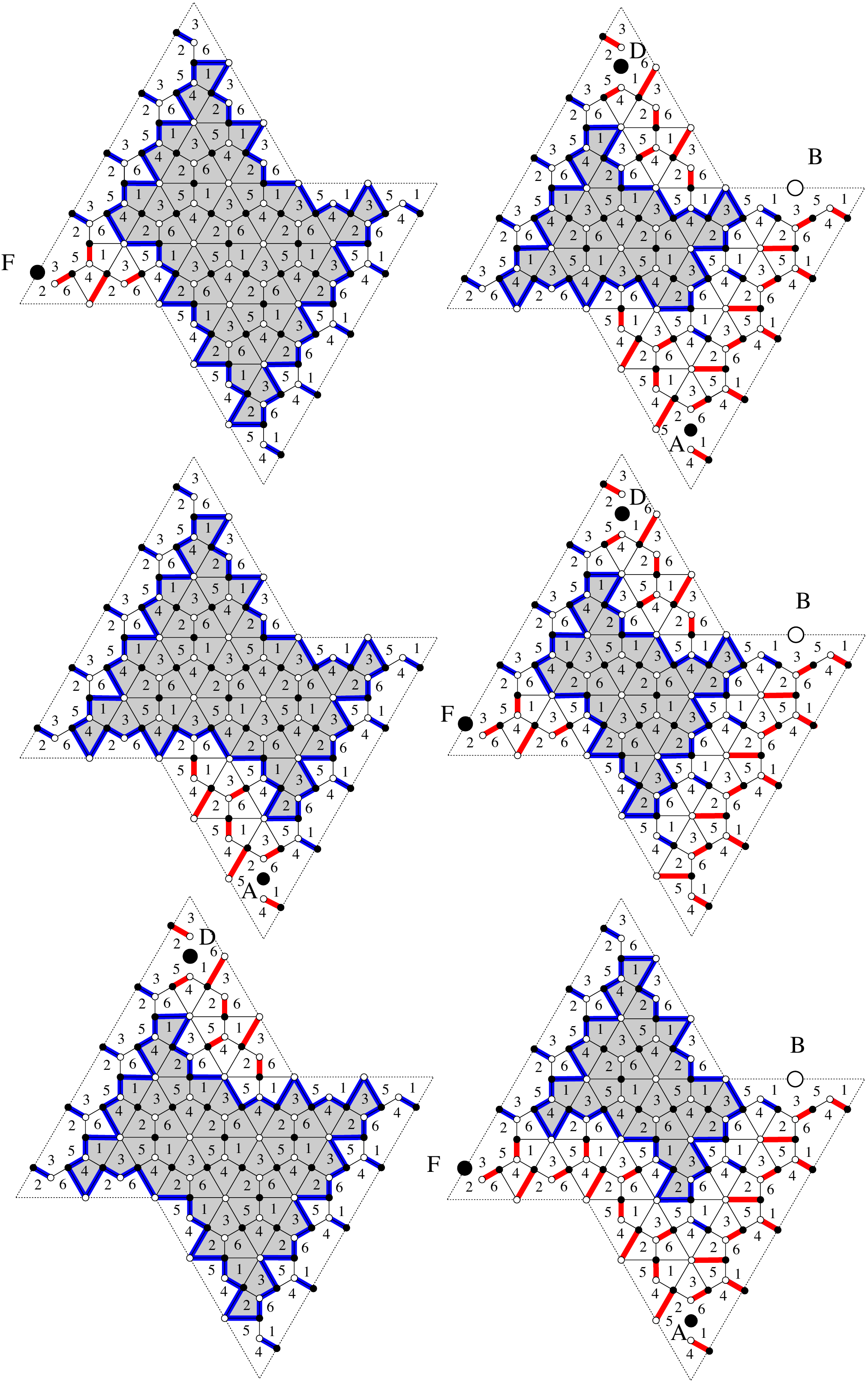}
\caption{Another example of (R4) using Unbalanced Kuo Condensation with $i=1, j=3, k=1$.}
\end{figure}

\begin{figure}
\includegraphics[width=12cm]{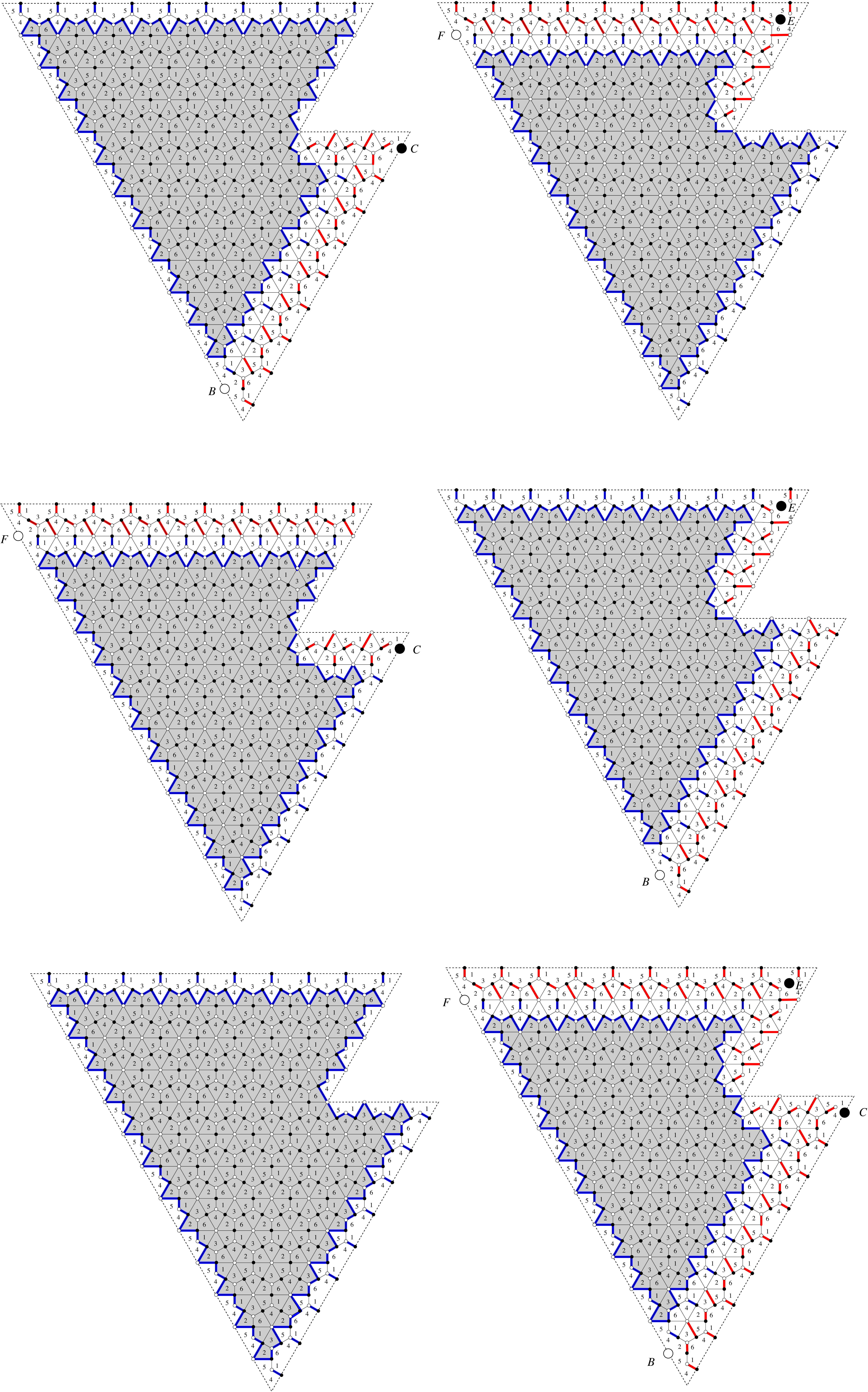}
\caption{Example of (R2) using Non-alternating Kuo Condensation with $i=-5, j=6,k=6$.}
\end{figure}

\begin{figure}
\includegraphics[width=12cm]{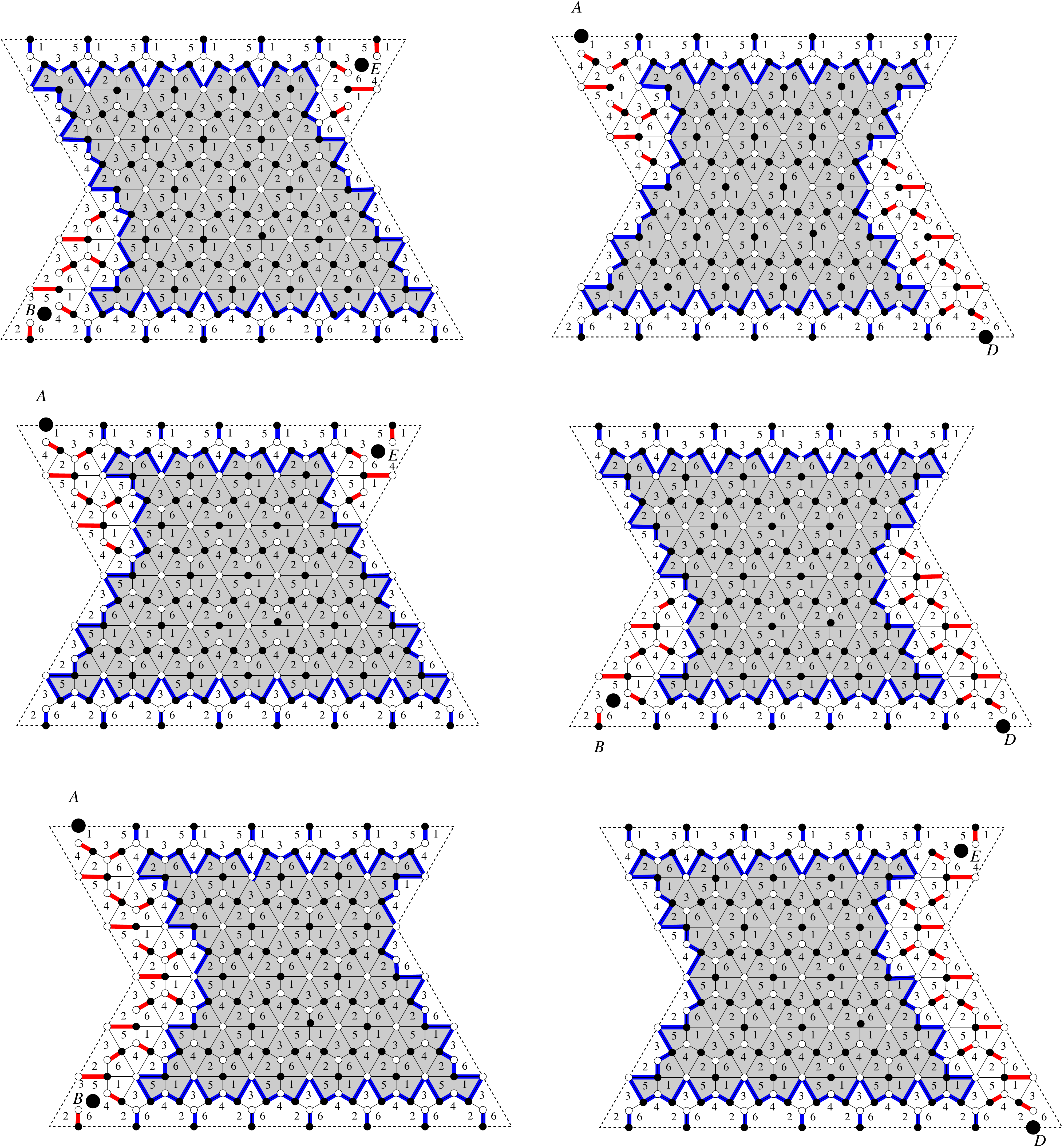}
\caption{Example of (R2) using Monochromatic Kuo Condensation with $i=0, j=4,k=0$.}
\label{fig:eggs}
\end{figure}

\begin{remark}
Balanced Kuo Condensation, as in Lemma \ref{Kuo1}, is utilized to prove Lemma 3.6 of \cite{LMNT}, Proposition 2 of \cite{zhang}, and the Recurrence (R4) in \cite{LaiNewDungeon}.  The proof of Theorem \ref{thm:main} shares similarities to the methods in those papers but using all four types of Kuo Condensation theorems provided us with a greater toolbox.  For instance, in \cite{LMNT}, a consequence of relying solely on only one of the four types of Kuo condensations was the need to consider only certain subsequences of $\tau$-mutation sequences along a so-called canonical path as opposed to mutations in any direction.  This was handled by induction and taking advantage of symmetries but required a non-intuitive decomposition of the $(i,j)$-plane into twelve regions.

Analogously, the first author's previous work in \cite{LaiNewDungeon} had to rely on a large number of base cases without utilizing the additional three Kuo Condensation Theorems.  We believe that more widely used application of all four of these Kuo condensation recurrences will allow more elegant attacks on other perfect matching enumeration problems that had previously appeared too daunting.
\end{remark}

\section{Conclusions and Open Questions} \label{sec:open}

In this paper, we succeeded in starting from the Model 1 $dP_3$ quiver and providing a $\mathbb{Z}^3$-parameterization of cluster variables that are reachable via sequences of toric mutations (i.e. two incoming arrows and two outgoing arrows at every vertex when it is mutated).  For such cluster variables, we presented an explicit algebra formula for their Laurent expansions in terms of this parameterization.  Then for most of these cluster variables (the ones corresponding to contours without self-intersections) we also gave a combinatorial interpretation to these Laurent expansions in terms of subgraphs of the $dP_3$ brane tiling $\mathcal{T}$.

We suspect that the methods of this paper should generalize to other quivers associated to brane tilings, i.e. those that can be embedded on a torus.  One interesting feature of the $dP_3$ case was the fact that the associated toric diagram and contours had six possible sides rather than the four sides that show up in the study of the octahedron recurrence \cite{speyer}, $T$-systems \cite{DiF}, Gale-Robinson Sequences and pinecones \cite{BMPW,JMZ}.  This led to new phenomena such as self-intersecting contours which we wish to investigate further in future work.

On the other hand, the $dP_3$ provides an example with a lot of symmetry coming from the toric diagram consisting of three pairs of anti-parallel sides.  Additionally, Model 1 gives rise to a brane tiling which is {\bf isoradial}, meaning that {\bf alternating-strand diagrams} \cite{Post,Scott}, also known as {\bf Postnikov diagrams} or {\bf zig-zag paths} \cite{GK}, behave (and can be drawn as) straight lines \cite{Vafa,HV}.  For less symmetric cases, the contours might not follow the lines of the brane tiling and instead require a description directly in terms of zig-zag paths or alternating-strands instead.

In particular, the boundaries of the subgraphs in this paper are actually segments of zig-zag-paths (as observed by R. Kenyon) and the alternating-strand diagrams would naturally oscillate along the contour lines thereby separating the line of white and black vertices on the strands' two sides.  However, translating lengths of these zig-zags into coordinates was not as well-behaved (see \cite{LMNT}) as the contour-coordinates of this current paper.  Based on conversations with David Speyer and Dylan Thurston, they have unpublished work that works with zig-zag coordinates for general brane tilings to yield cluster structures.

\begin{problem} \label{prob:self-int}
How do we make sense of contours that self-intersect, and their corresponding subgraphs so that we get the desired cluster variables and Laurent polynomials?  As indicated by conversations with R. Kenyon, D. Speyer, and B. Young, we suspect there is some sort of double-dimer interpretation for the Laurent expansions of such cluster variables.  Initial conjectured combinatorial interpretations in this direction did not produce the correct formulas, but more recent computations by the second author and D. Speyer showed more promise.
\end{problem}

\begin{problem} We saw in Remarks \ref{rem:RG} and \ref{rem:zono} that the generalized $\tau$-mutation sequences have particular properties of note to physicists that more general toric mutation sequences lack.  Mathematically, we wonder if there are other important ways that the generalized $\tau$-mutation sequences are special.  We have already seen that they satisfy Coxeter relations in this example.  Additionally, based on explorations by Michael Shapiro and Michael Gehktman \cite{GS}, it seems natural to ask if the generalized $\tau$-mutation sequences represent the integrable directions of motions in the discrete integrable system induced by this cluster algebra.\end{problem}

\begin{problem}
How well do the results and methods of this paper extend to other quivers and subgraph interpretations?  For example the $F_0 (\mathbb{P}^1 \times \mathbb{P}^1)$, Gale-Robinson Quivers, and others arising from the Octahedron Recurrence or $T$-systems have similar looking combinatorial interpretations using Aztec Diamonds or Pinecones.  Can we construct the Aztec Diamonds and Pinecones using a parameterization and contour coordinates?  In principle, this should agree with the methods in \cite{speyer}, but the transformation between the two coordinate systems and parameterization of cluster variables is still not written down.
\end{problem}

\begin{problem}
In the first authors' work on Blum's conjecture with Ciucu \cite{lai'}, a number of other subgraphs of doubly-periodic tilings of the plane appear.  Some of these are called Needle Families and Hexagonal Dungeons.  In fact the Hexagonal Dungeons are the dual graph for the Model 4 $dP_3$ quiver, so it is expected that the weighted enumeration of perfect matchings of these regions should indeed yield Laurent expansions of cluster variables starting with a different initial cluster.

However, one mystery is why contours have coordinates $(a,b,c,d,e,f)$ summing up to $1$ in the present paper (see Lemma \ref{lem:closeup}), but summing up to $0$ in the case of Hexagonal Dungeons.  For the Needle family, this seems to be a genuinely different quiver.  Can we find other quivers such that the weighted enumeration of perfect matchings of those subgraphs also yield Laurent expansions of cluster variables reachable by certain mutation sequences in the corresponding cluster algebra?
\end{problem}

\begin{problem}
In work in progress with Di Francesco, the equations of Section \ref{sec:explicit} seem well-suited for analyzing limit shapes.  This analysis uses techniques similar to those used in the case of T-systems by Di Francesco and Soto-Garrido for studying arctic curves \cite{diFSG}.  Several nuances appear to make this case quite interesting such as multi-dimensional time and the role of non-convex regions.
\end{problem}

\begin{problem}
In his study of Admissible $W$-graphs, Stembridge \cite{Stembridge} introduced a family of directed graphs (equivalently quivers) $\Lambda \times \Lambda$ known as a {\bf twist}.  Here $\Lambda$ is an orientation of a Dynkin diagram of type $A$, $D$, or $E$.  Such twists were recently studied by Galashin and Pylyavskyy in \cite{GaPy} as part of their classification of Zamolodchikov periodic quivers.  Such quivers and their associated cluster algebras have the property that one can define certain sequences of mutations such that their compositions with one another satisfy the Coxeter relations of $\Lambda$, when thought of as an action on cluster variables.  As pointed out to us by Pylyavskyy, the $dP_3$ quiver is an example of such a quiver, in particular this is a twist of the Weyl group Affine $\tilde{A}_2$.  The relevant Coxeter relations were seen in (\ref{eq: tau_relations}).  This thus motivates an exploration for combinatorial formulas for cluster variables associated to the quivers constructed as the twists of other Weyl groups or other Zamolodchikov periodic quivers.  
\end{problem}

{\bf Acknowledgments:} The authors are grateful to Mihai Ciucu, Philippe Di Francesco, Richard Eager, Sebastian Franco, Michael Gehktman, Rinat Kedem, Richard Kenyon, Pasha Pylyavskyy, Michael Shapiro, David Speyer, and Dylan Thurston for a number of inspirational discussions.  We are both appreciative of the hospitality of the Institute of Mathematics and its Applications (IMA) for providing support for this project.  The second author was supported by NSF Grants DMS-\#1148634 and DMS-\#13692980. Much of this research was also aided by the open source mathematical software \cite{sage}.  We are both appreciative of the hospitality of the Institute of Mathematics and its Applications (IMA) for providing support for this project. Much of this project was done during the time the first author worked at the IMA as a postdoctoral associate (2014--2016).

\bibliographystyle{alphaurl}

\newcommand{\etalchar}[1]{$^{#1}$}

\end{document}